\newtheorem{theorem}{Theorem}[section]
\newtheorem{lemma}[theorem]{Lemma}
\newtheorem{corollary}[theorem]{Corollary}
\newtheorem{proposition}[theorem]{Proposition}
\theoremstyle{definition}
\newtheorem{remark}{Remark}
\newtheorem{definition}[theorem]{Definition}
\newtheorem{example}[theorem]{Example}
\DeclareMathOperator{\McN}{\mathscr{M}}
\DeclareMathOperator{\Pol}{\mathscr{P}}
\DeclareMathOperator{\FP}{\mathsf{U}_{fp}}
\DeclareMathOperator{\Set}{\mathsf{SET}}
\DeclareMathOperator{\wasc}{\mathsf{W}}
\DeclareMathOperator{\QP}{\mathsf{P}_{\Zed}}
\DeclareMathOperator{\McNn}{\mathscr{M}{([0,1]^{\it n})}}
\DeclareMathOperator{\cube}{[0,1]^{\it n}}
\DeclareMathOperator{\R}{\mathbb{R}}
\newcommand\Zed{{\ensuremath{\mathbb{Z}}}}
\newcommand{\restrict}[1]{\negthinspace \upharpoonright \negthinspace #1}
\DeclareMathOperator{\conv}{\rm conv}
\DeclareMathOperator{\den}{\rm den}
\DeclareMathOperator{\ver}{\rm vert}
\DeclareMathOperator{\dist}{\rm dist}
\title[Simplicial geometry of unital $\ell$-groups]
{Simplicial geometry of unital lattice-ordered abelian groups
}
\author[L.M.Cabrer]{Leonardo Manuel Cabrer %\footnote{}
}
\address[L.M.Cabrer]{Dep. de Matem{\'a}ticas -- Facultad de Ciencias Exactas \\
Universidad Nacional del Centro de la Provincia de Buenos Aires \\
Pinto 399 -- Tandil (7000) \\
Argentina }
\email{lcabrer@exa.unicen.edu.ar }
\date{\today. The main results of this paper have been obtained
while the author was holding a postdoctoral position  in the
Mathematiches Institute of the University of Bern.}
\begin{document}

\begin{abstract}
By an $\ell$-group $G$ we mean a
lattice-ordered abelian group. This paper is concerned
with the category  $\FP$  of finitely presented
 {\it unital} $\ell$-groups,  those
$\ell$-groups having a distinguished order-unit $u$.
Using the duality between $\FP$  the
category of rational polyhedra, we will provide (i)  a
construction of finite limits and co-limits in $\FP$; (ii)  a
Cantor-Bernstein-Schr\"oder theorem for finitely presented unital $\ell$-groups;
(iii)   a geometrical characterization of finitely generated
subalgebras of free objects of $\FP$.
\end{abstract}

\maketitle

%%%%%% %%%%%% %%%%%% %%%%%% %%%%%% %%%%%% %%%%%% %%%%%% %%%%%% %%%%%%
%%%%%% %%%%%% %%%%%% %%%%%% %%%%%% %%%%%% %%%%%% %%%%%% %%%%%% %%%%%%
%%%%%% %%%%%% %%%%%% %%%%%% %%%%%% %%%%%% %%%%%% %%%%%% %%%%%% %%%%%%
%%%%%% %%%%%% %%%%%% %%%%%% %%%%%% %%%%%% %%%%%% %%%%%% %%%%%% %%%%%%
{\bf MSC2010}\ \ {Primary:   06F20, 52B20, 18B30.
Secondary:  05E45,  52B11, 18A35, 55U05, 55U10
 57Q05.}

\section{Introduction}

A {\it unital $\ell$-group} $(G,u)$ is
an abelian group $G$ equipped with a translation invariant lattice-order
and with a distinguished {\it order unit}, i.e.
an element $0\leq u\in G$ whose positive integer multiples eventually
dominate every element of $G$.
A {\it unital $\ell$-homomorphism} between unital $\ell$-groups is
a group homomorphism that preserves the order unit and the lattice structure.

%%%%%%%%%%%%%% Definition of Finitely presented
As a particular case of a general definition \cite[p. 286]{BS1981},
  a unital $\ell$-group $(G,u)$ is said to be
   {\it finitely presented} if there exists a finite set
    $\{g_1,\ldots,g_n\}\subseteq G$ along with a finite set of equations
    $s_1=t_1,\ldots,s_m=t_m$ in the language of
    unital $\ell$-groups with $n$-variables such that

\begin{enumerate}
   \item $s_i(g_1,\ldots,g_n)=t_i(g_1,\ldots,g_n)$ for each $i=1,\ldots,m$ and
   \item if $(H,v)$ is a unital $\ell$-group and $h_1,\ldots,h_n\in H$ satisfy
   $s_i(h_1,\ldots,h_n)=t_i(h_1,\ldots,h_n)$ for each $i=1,\ldots,m$,
   then there exists a unique unital $\ell$-homomorphism
$h\colon (G,u)\rightarrow(H,v)$
   such that $h(g_1)=h_1,\ldots, h(g_n)=h_n.$
\end{enumerate}
We denote by $\FP$
%
%\begin{quote}
%{\bf D-to-L:  here are the possible correct
%usages of the verb ``denote'':
%\begin{itemize}
%\item we denote by $\partial X$ the boundary of $X$
%\item we let $\partial X$ denote the boundary of $X$
%\item the boundary of $X$ will be denoted $\partial X$
%\item $\partial X$ will denote the boundary of $X$
%\end{itemize}
%}
%\end{quote}
the category of finitely presented unital
$\ell$-groups with unital $\ell$-homomorphisms.

For $n=1,2,\ldots$ we let $\McNn$ denote the unital $\ell$-group of
all continuous functions $f\colon [0,1]^{n}\rightarrow\mathbb R$
having the following property:
there are linear polynomials $p_{1},\ldots,p_{m}$ with integer coefficients
such that for all $x\in [0,1]^{n}$ there is $i\in \{1,\ldots,m\}$
with $f(x)=p_{i}(x)$.
$\McNn$ is equipped with the pointwise operations $+,-,\max,\min$ of
$\mathbb R$,
and with the constant function $1$ as the distinguished order unit.

$\McNn$  is a free object in the category of unital $\ell$-groups,
in the following sense:

\begin{proposition}\label{proposition:free}{\rm (\cite[Corollary
4.16]{Mu1986})}
   The coordinate maps $\xi_ {i} \colon [0,1]^{n}\to \mathbb R$ together with
    the order unit $1$ form  a generating set of $\McNn$.
   For every unital $\ell$-group $(G,u)$ and $0\leq g_{1},\ldots,g_{n}\leq u$,
   if the set $\{g_{1},\ldots,g_{n}, u\}$ generates $G$  then
   there is a unique unital $\ell$-homomorphism $\psi$ of $\McNn$ onto $G$
   such that $\psi(\xi_ {i})=g_{i}$ for each $i=1,\ldots,n.$
\end{proposition}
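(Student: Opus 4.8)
The plan is to separate the statement into its two independent halves: first that $\{\xi_1,\dots,\xi_n,1\}$ generates $\McNn$, and then the universal mapping property. Uniqueness of $\psi$ is the easy part and follows immediately from the first half: a unital $\ell$-homomorphism preserves $+,-,\vee,\wedge$ and the order unit, so it is completely determined by its restriction to any generating set; since $\psi(\xi_i)=g_i$ and necessarily $\psi(1)=u$, at most one such $\psi$ can exist once we know that the $\xi_i$ and $1$ generate.

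For the generation claim I would argue geometrically. Take $f\in\McNn$ with linear pieces $p_1,\dots,p_m$ having integer coefficients. Each $p_i$ is a $\Zed$-linear combination of the coordinate projections $\xi_1,\dots,\xi_n$ together with the constant $1$, so each $p_i$ lies in the $\ell$-subgroup $H$ generated by $\{\xi_1,\dots,\xi_n,1\}$. It then remains to recover $f$ itself from these affine pieces using only $\vee$ and $\wedge$. This is the substantive step: one shows that a continuous piecewise-$\Zed$-affine function admits a finite lattice representation $f=\bigvee_{i}\bigwedge_{j\in S_i}p_j$ in terms of its own linear pieces, where the index sets $S_i$ are read off from the linear regions and their adjacencies (the Baker--Beynon / McNaughton min--max normal form for piecewise-linear functions). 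This places $f\in H$, so $H=\McNn$. I expect this representation to be the main obstacle, since it is essentially the content of McNaughton's theorem adapted to the $\ell$-group setting.

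For the universal property I would pass through Mundici's categorical equivalence $\Gamma$ between unital $\ell$-groups and MV-algebras. Writing $\Gamma(G,u)$ for the unit interval $[0,u]\subseteq G$ endowed with $x\oplus y=(x+y)\wedge u$ and $\neg x=u-x$, the hypothesis $0\le g_i\le u$ places each $g_i$ in $\Gamma(G,u)$, and the hypothesis that $\{g_1,\dots,g_n,u\}$ generates $G$ translates into the statement that $g_1,\dots,g_n$ generate the MV-algebra $\Gamma(G,u)$. Now $\Gamma(\McNn,1)$ is precisely the MV-algebra $\McN^{*}(\cube)$ of $[0,1]$-valued McNaughton functions, which by McNaughton's theorem is the free MV-algebra on the generators $\xi_1,\dots,\xi_n$. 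Freeness yields a unique MV-homomorphism $\bar\psi\colon\Gamma(\McNn,1)\to\Gamma(G,u)$ with $\bar\psi(\xi_i)=g_i$, and applying $\Gamma^{-1}$ (equivalently, extending to the enveloping $\ell$-groups) returns the desired unital $\ell$-homomorphism $\psi\colon\McNn\to G$. Surjectivity of $\psi$ is then automatic: its image is an $\ell$-subgroup containing $g_1,\dots,g_n$ and $u=\psi(1)$, hence all of $G$. The one point requiring care here is the translation ``generates as a unital $\ell$-group'' $\Leftrightarrow$ ``generates as an MV-algebra'', which rests on the good-sequence description underlying the equivalence $\Gamma$.
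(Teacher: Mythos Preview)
The paper does not give its own proof of this proposition; it is stated with a citation to \cite[Corollary 4.16]{Mu1986} and used as a black box thereafter. Your proposal is essentially the argument behind that cited result: the generation claim is the $\ell$-group version of the McNaughton min--max normal form, and the universal property is obtained by transporting the freeness of the $n$-generated free MV-algebra through the categorical equivalence $\Gamma$ of \cite{Mu1986}. Both halves are correct, and the two places you flag as needing care---the existence of the $\bigvee\bigwedge$ representation, and the equivalence between generation as a unital $\ell$-group and generation of the associated MV-algebra---are precisely the substantive ingredients.
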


  An {\it ideal} $\mathfrak i$ of a unital $\ell$-group $(G,u)$
  is the kernel of a unital $\ell$-homomorphism  of $(G,u)$,
(\cite[p.8  and  1.14]{Go1986}).
% %
  $\mathfrak i$ is {\it principal} if it is singly (=finitely) generated.

As a
  consequence of Proposition \ref{proposition:free},
   a unital $\ell$-group $(G,u)$ is finitely presented
iff for some $n=1,2,\ldots,$ $(G,u)$
is isomorphic to the quotient of $\McNn$ by
some principal ideal $\mathfrak j$,
in symbols, $(G,u)\cong \McNn/\mathfrak j$.

The characterization of finitely presented unital $\ell$-groups presented in
Proposition \ref{proposition:free} relies
 on the free objects $\McNn$ and their universal property. In \cite{CM2011} an intrinsic characterization of finitely presented unital $\ell$-groups is
 given in terms of  special sets of generators, called  bases. The notion of
{basis}
was introduced in \cite{MM2007} as a purely algebraic counterpart of
Schauder bases.
In  \cite[Theorem 4.5]{MM2007}
it is proved that if a
unital $\ell$-group $(G,u)$  is isomorphic to an $\ell$-group of
real-valued functions defined on some set $X$ (that is,
$G$  is {\emph{Archimedean}}) then it is finitely presented
iff it has a basis. In \cite[Theorem 3.1]{CM2011} it is proved that the Archimedean assumption
can be dropped: thus,  $(G,u)$ is finitely presented iff it has a basis.

In Section 2  we give a detailed account of
the main tools used in this paper, namely
  the categorical duality
between finitely presented unital $\ell$-groups
and rational polyhedra, and the combinatorial
representation of rational polyhedra as  weighted
abstract simplicial complexes.

  Section 3 is devoted to the construction
  of limits and co-limits in these two categories.

Finally, in Section 4,  all the machinery of the earlier chapters
will be combined with the algebraic-topological analysis of projective
unital $\ell$-groups  in \cite{CM20XX}
to give geometric and algebraic characterizations of finitely generated subalgebras of
the free unital $\ell$-groups  $\McNn$.

%%%%%%%%%%%%%%%%%%%%%%%%%%%%%%%%%%%%%%%%%%%%%%%%%%%%%%%%%%%%%%%%%%%%%%%%%%%%%%%%%%%%%%%%%%%%%%%%%%
%%%%%%%%%%%%%%%%%%%%%%%%%%%%%%%%%%%%%%%%%%%%%%%%%%%%%%%%%%%%%%%%%%%%%%%%%%%%%%%%%%%%%%%%%%%%%%%%%%
\section{Preliminaries}

%%%%%%%%%%%%%%%%%%%%%%%%%%%%%%%%%%%%%%%%%%%%%%%%%%%%%%%%%
\subsection{Regular triangulations}
We refer to \cite{Ew1996}, \cite{Gl1970} and \cite{St1967} for
background in elementary polyhedral topology and simplicial complexes.

%%%%%%%%%%%%%%%%%% Definition of vertices and faces of a simplex
For any simplex $S$ we denote by $\ver(S)$ the set of its vertices.
For any $F\subseteq \ver(S)$, the convex hull $\conv(F)$ is called a
{\it face} of $S$.
A {\it polyhedron} $P$  in $\R^{n}$  is a finite union of (always
closed) simplexes $P=S_{1}\cup\cdots\cup S_{t}$ in $\R^n$.

%%%%%%%%%%%%%%%%%%%%%%%%% Definition of Polyhedron
A simplex $S$ is said to be {\it rational} if the coordinates of each
$v\in \ver(S)$ are rational numbers.
$P$ is said to be a {\it rational polyhedron} if there are rational
simplexes $T_{1},\ldots,T_{l}$
such that $P=T_{1}\cup\cdots\cup T_{l}$.
%

%%%%%%%%%%%%%%%%%% Definition of support and rational simplicial complex
For every simplicial complex  ${\Delta}$, its {\it support}
$|\Delta|$ is the pointset union of all simplexes of $\Delta$,
and $\ver(\Delta)$ is the set of its vertices, i.e.
 the set of the
vertices of its simplexes.
We say that the simplicial complex  $\Delta$ is {\it rational} if all
simplexes of $\Delta$ are rational.
%%%%%%%%%%%%%%%%%%%%%%%%%%%%%%%%%%%%%%%%%%%

%%%%%%%%%%%%%%%%%%%%%%%%%%%%%% Triangulation
Given a rational polyhedron $P$,   a {\it triangulation}  of
$P$ is a rational simplicial complex $\Delta$ such that $P=|\Delta|$.
In \cite[Theorem 1]{Be1977} it is proved that
$\Delta$ exists for every rational polyhedron $P$.
%%%%%%%%%%%%%%%%%%%%%%%%%%%%%%%%%%%

%%%%%%%%%%%%%%%%%%%%%%%%%%% Rationality from now on
In the rest of this paper every simplex, polyhedron, and simplicial
complex will be rational.
Accordingly, the adjective ``rational'' will be omitted unless it is
strictly necessary.
%%%%%%%%%%%%%%%%%%%%%%%%%%%%%%%%%%%%%%%%%%

%%%%%%%%%%%%%%% Definition denominator
For $v$ a rational point in $\R^{n}$ we let $\den(v)$ denote the
least common denominator of the coordinates of $v$.
The vector $\widetilde{v}=\den(v)(v,1)\in\Zed^{n+1}$ is called the
{\it homogeneous correspondent} of $v$.
%%%%%%%%%%%%%%%%%%%%%%%%%%
%
%%%%%%%%%%%%%% Definition of regular simplex
A simplex $S$ is called {\it regular} if the set of homogeneous
correspondents of its vertices is part of a basis of the free abelian
group $\Zed^{n+1}.$
%%%%%%%%%%%%%%%%%%%%%%%%%
%
%%%%%%%%%%%%%%%% Definition Regular triangulation
By a {\it regular triangulation} of a polyhedron $P$ we understand a
triangulation of $P$ consisting of regular simplexes.
%%%%%%%%%%%%%%%%%

The following proposition  was proved in \cite[Theorem 1.2]{Mu1986}
under the  assumption that $X\subseteq [0,1]^{n}$. However,
it is easy to see  that  the proof is the same for
all $X\subseteq \mathbb R^n$ (also see
%\cite[Lemma 2.3]{MS201X} and
  \cite[Lemma 3.1]{MuXXXX}):

\begin{proposition}\label{proposition:poly}
For any
   set   $X\subseteq \R^{n}$ the following statements are equivalent:
   \begin{enumerate}
     \item $X$ coincides with the support of some regular complex $\Delta$;
     \item $X$ is a rational polyhedron.
   \end{enumerate}
\end{proposition}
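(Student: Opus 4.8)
The plan is to establish the two implications separately; the direction (i) $\Rightarrow$ (ii) is immediate, while (ii) $\Rightarrow$ (i) requires a desingularization argument. For (i) $\Rightarrow$ (ii), suppose $X=|\Delta|$ for a regular complex $\Delta$. Each vertex $v$ of each simplex of $\Delta$ has its homogeneous correspondent $\widetilde v=\den(v)(v,1)$ in $\Zed^{n+1}$, so $v$ is rational; hence every maximal simplex of $\Delta$ is a rational simplex and $X$, being their finite union, is a rational polyhedron by definition.

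For the substantial direction (ii) $\Rightarrow$ (i), I would start from the triangulation theorem recalled above, which supplies a triangulation $\Delta_0$ of the rational polyhedron $X$ by rational simplexes, and then refine $\Delta_0$ into a regular triangulation with the same support. The natural setting is the homogeneous picture: to a rational point $v$ attach the primitive vector $\widetilde v\in\Zed^{n+1}$, and to a simplex $S=\conv(v_0,\dots,v_d)$ of $\Delta_0$ attach the simplicial cone $C_S=\R_{\ge0}\widetilde{v_0}+\cdots+\R_{\ge0}\widetilde{v_d}\subseteq\R^{n+1}$. Since $\Delta_0$ is a simplicial complex, the cones $C_S$ meet along common faces and form a rational pointed fan $\Sigma$, and by definition $S$ is regular exactly when $C_S$ is unimodular, i.e.\ when $\det C_S=1$.

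The core step is then to refine $\Sigma$ into a unimodular fan. I would invoke the de Concini--Procesi desingularization: repeatedly pick a cone $C_S$ with $\det C_S>1$ and perform a stellar subdivision, starring at a primitive integer vector $w\in C_S$ witnessing the non-unimodularity (a Farey-type mediant of the generators of $C_S$). Each such subdivision keeps $\Sigma$ a fan, preserves its support, and strictly decreases the monovariant $\sum_S\det C_S$; hence after finitely many steps all cones are unimodular. Dehomogenizing --- sending a primitive ray generator $w=(w',w_{n+1})$ back to the rational point $w'/w_{n+1}$, and each cone to the corresponding simplex --- produces a regular triangulation $\Delta$ with $|\Delta|=X$.

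The main obstacle is precisely this desingularization step: one must check that a single starring can always be chosen so that the total multiplicity strictly drops while the subdivided cones still fit together into a genuine fan (equivalently, that the refined simplexes still form a triangulation, with matching faces across adjacent cells) and no new ray escapes $\Sigma$. This is exactly the content of the toric resolution-of-singularities argument, and once it is in place the termination bound provided by $\sum_S\det C_S$ closes the proof.
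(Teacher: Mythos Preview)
The paper does not supply its own proof of this proposition; it records the result as known, citing \cite[Theorem~1.2]{Mu1986} and \cite[Lemma~3.1]{MuXXXX}. Your outline---Beynon's rational triangulation followed by passage to the homogeneous fan and iterated stellar subdivision until all cones become unimodular---is precisely the desingularization argument carried out in those references, and is also the procedure the paper invokes elsewhere (in the proofs of Lemmas~\ref{Lem_Triang-Subset} and~\ref{Lem_Sub-Triang-Subset}, via \cite{CDM2000} and \cite{Mu1988}). So in strategy you are aligned with the sources the paper relies on.

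There is, however, a real gap in your termination step: the quantity $\sum_S\det C_S$ does \emph{not} strictly decrease under a stellar subdivision once the dimension exceeds two. If $C=\mathrm{cone}(v_0,\dots,v_d)$ has multiplicity $m$ and you star at a primitive $w=\sum_j c_j v_j$ with $0\le c_j<1$, the resulting maximal cones $C_i$ (replace $v_i$ by $w$) have multiplicities $c_i m$, and $\sum_i c_i m=(\sum_i c_i)m$ can exceed $m$; a $3$-dimensional cone of multiplicity $m$ starred at an interior lattice point typically yields total multiplicity close to $2m$. What does work is the lexicographic pair
\[
\Bigl(\max_S\det C_S,\ \#\{S:\det C_S\text{ is maximal}\}\Bigr):
\]
if you always star at a primitive $w$ in the \emph{relative interior} of a maximal-multiplicity cone $C$ (so only $C$ is subdivided), every new cone has multiplicity $c_i m<m$, and the pair drops. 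With this corrected invariant---and the observation that an interior choice of $w$ avoids disturbing neighbouring cones---your argument closes exactly as you describe.
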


%%%%%%%%%%%%%%%%%%%%%%%%%%%%%%%%%%%%%%%%%%%%%%%%%%%%%%
\subsection{Farey subdivisions}
%%%%%%%%%%%%%

%%%%%%%%%%%%%%%%%%% Definition of subdivision
Given a polyhedron $P$ and triangulations $\Delta$ and $\Sigma$ of $P$
we say that $\Delta$ is a {\it subdivision} of $\Sigma$ if every
simplex of $\Delta$ is contained in a simplex of $\Sigma$.
%%%%%%%%%%%%%%%%%%%%%%%%%%%%%%
%
%%%%%%%%%%%%%%%%% Definition of Blow-up
For any  point $p \in P$,
the {\it blow-up $\Delta_{(p)}$ of $\Delta$ at $p$} is the
subdivision of $\Delta$
given by replacing every simplex $S\in \Delta$ that contains $p$
by the set of all simplexes of the form $\conv(F\cup\{p\})$,
where $F$ is any face of $S$ that does not contain $p$
(see \cite[p. 376]{Wl1997} or \cite[III, Definition 2.1]{Ew1996},  where
blow-ups are called stellar subdivisions).
%%%%%%%%%%%%%%%%%%%%%%%%%%%%

%%%%%%%%%%%%%%%%%%%%%Definition Farey Mediant
For any regular  $m$-simplex $S =\conv(v_{0},\dots,v_m) \subseteq
\mathbb R^{n}$,
the {\it Farey mediant} of (the vertices of) $S$
is the rational point $v$ of $S$ whose homogeneous correspondent
$\tilde v$ equals $\widetilde{{v}_0}+\cdots+\widetilde{{v}_m}$.
%%%%%%%%%%%%%%%%%%%%%%%%
%
%%%%%%%%%%%%%%%%%%%% Definition Fare Blow-up (down)
If $S$ belongs to a triangulation $\Delta$ and $v$ is the Farey mediant of $S$
then the blow-up $\Delta_{(v)}$ is a regular triangulation iff
so is $\Delta\,\,\,$     (\cite[V, 6.2]{Ew1996}).
$\Delta_{(v)}$ will be called the {\it Farey blow-up} of $\Delta$ at $v$.
By a {\it (Farey) blow-down} we understand  the inverse  of
a (Farey) blow-up.
%%%%%%%%%%%%%%%%%%%%%%%%%%
\smallskip

The proof of the ``weak Oda conjecture'' by
  Morelli \cite{Mo1996}
and  W\l odarczyk
\cite{Wl1997}  yields:
\begin{lemma}\label{lemma:weakoda}
   Let $P$ be a polyhedron.
   Then any two regular triangulations of $P$ are connected by a
finite path of Farey blow-ups and Farey blow-downs.
\end{lemma}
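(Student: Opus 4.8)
The plan is to lift the entire problem to the level of rational fans in $\R^{n+1}$, where the assertion becomes precisely the combinatorial content of the weak Oda conjecture, and then transport the resulting factorization back down to $P$.

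First I would set up the homogenization dictionary. To a triangulation $\Delta$ of $P$ I associate the fan $\widehat{\Delta}$ in $\R^{n+1}$ whose cones are the sets $\widehat{S}=\{\sum_i\lambda_i\widetilde{v_i}:\lambda_i\geq 0\}$, as $S=\conv(v_0,\dots,v_m)$ ranges over $\Delta$, together with all their faces and the zero cone. Since $\den(v)>0$ for every vertex $v$, each $\widehat{S}$ is a pointed rational cone lying in $\{x_{n+1}\geq 0\}$ and meeting the hyperplane $\{x_{n+1}=1\}$ exactly in $S\times\{1\}$; hence slicing $\widehat{\Delta}$ at height $1$ recovers $\Delta$, and $\Delta\mapsto\widehat{\Delta}$ is a bijection between triangulations of $P$ and those rational fans all of whose rays lie in $\{x_{n+1}>0\}$ and whose support is the fixed cone $C_P=\R_{\geq 0}\cdot(P\times\{1\})$, a set depending only on $P$. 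By the very definition of regularity, $S$ is regular exactly when the primitive generators $\widetilde{v_0},\dots,\widetilde{v_m}$ of $\widehat{S}$ extend to a $\Zed$-basis, i.e.\ exactly when $\widehat{S}$ is a smooth (unimodular) cone; thus $\Delta$ is a regular triangulation of $P$ iff $\widehat{\Delta}$ is a smooth fan with support $C_P$.

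Next I would match up the moves. If $v$ is the Farey mediant of a regular simplex $S\in\Delta$, then $\widetilde{v}=\widetilde{v_0}+\cdots+\widetilde{v_m}$ is the primitive generator of the new ray, and under homogenization the Farey blow-up $\Delta_{(v)}$ corresponds precisely to the star (stellar) subdivision of $\widehat{\Delta}$ at $\widetilde{v}$: since $v$ lies in the relative interior of $S$, the simplexes of $\Delta$ containing $v$ are exactly those having $S$ as a face, and their cones are exactly those having $\widehat{S}$ as a face. A direct check shows this star subdivision replaces $\widehat{S}$ by the smooth cones $\widehat{\conv(F\cup\{v\})}$ and keeps the fan smooth, in accordance with the regularity statement quoted from \cite[V, 6.2]{Ew1996}. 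Dually, a Farey blow-down corresponds to an inverse star subdivision. So the admissible operations downstairs are exactly the smoothness-preserving star subdivisions of smooth fans and their inverses.

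Finally I would invoke the weak Oda conjecture. As $\widehat{\Delta}$ and $\widehat{\Sigma}$ are two smooth fans with the same support $C_P$, the factorization theorem of Morelli \cite{Mo1996} and W\l odarczyk \cite{Wl1997} connects them by a finite sequence of star subdivisions and inverse star subdivisions passing through smooth fans. The step I expect to be the main obstacle is to verify that inside the smooth category these moves are forced to be the Farey ones: if one star-subdivides a smooth cone $\sigma$ with primitive generators $e_0,\dots,e_r$ at a primitive $w=\sum a_ie_i$ lying in the relative interior of a face of $\sigma$, then each resulting maximal cone $\conv(w,e_0,\dots,\hat{e_i},\dots,e_r)$ has determinant $\pm a_i$, so smoothness of the subdivision forces every $a_i=1$; hence $w$ is the Farey mediant of that face. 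Consequently the sequence supplied by the theorem pulls back along the dictionary of the first paragraph to a finite path of Farey blow-ups and Farey blow-downs joining $\Delta$ and $\Sigma$. A secondary point requiring care is that $C_P$ is in general a proper subcone of $\R^{n+1}$, so one must apply the relative (proper, non-complete) form of the factorization — two fans sharing an arbitrary common support — rather than its better-known statement for complete fans; this relative version is exactly what the cited work provides.
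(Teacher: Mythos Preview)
Your proposal is correct and follows exactly the route the paper intends: the paper gives no proof of this lemma at all, merely stating that it ``yields'' from Morelli--W\l odarczyk, and you have faithfully unpacked what that means---homogenize to smooth fans over $C_P$, identify Farey blow-ups with smooth stellar subdivisions, invoke weak factorization, and use the determinant argument to see that smoothness forces every intermediate stellar move to be at a Farey mediant. Your two flagged subtleties (that smoothness pins down the subdivision ray, and that one needs the non-complete form of the factorization) are precisely the points one must check, and both are handled correctly.
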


For later use in this paper,  we
  recall here some properties of regular triangulations.
\begin{lemma}
\label{Lem_Triang-Subset}
   Let $P\subseteq Q\subseteq\R^{n}$ be rational polyhedra and
   $\Delta$ be a regular triangulation of $P$.
   Then there exists a regular triangulation $\Delta_Q$ of $Q$ such that
   the set
   $\Delta_{P}=\{S\in\Delta_Q\mid S\subseteq P\}$ is a subdivision of
   $\Delta$.
   Moreover, $\Delta_Q$ can be so chosen that $\Delta_P$ is a
   { \rm full}
   subcomplex of $\Delta_Q$,  in the sense that
    $\Delta_P=\{S\in\Delta_{Q}\mid \ver(S)\subseteq P\}$.
\end{lemma}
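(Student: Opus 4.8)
My plan is to decouple the combinatorics of the extension from the arithmetic of regularity: first extend $\Delta$ to a \emph{rational} triangulation of $Q$ in which $P$ appears as a full subcomplex, and only afterwards restore regularity by desingularization, checking at each step that the subpolyhedron $P$ is respected. For the first step, ignoring regularity, I would invoke the classical fact that a triangulation of a closed subpolyhedron extends to a triangulation of the ambient polyhedron (see \cite{Gl1970,St1967}) to obtain a rational triangulation $\Theta$ of $Q$ having $\Delta$ as a subcomplex; rationality is preserved since all the data are rational. Passing to the first barycentric subdivision $\Theta'$, I may moreover assume that $\Theta'_{P}:=\{S\in\Theta'\mid S\subseteq P\}$ is a \emph{full} subcomplex with support $P$, because in the first barycentric subdivision any subcomplex becomes full, and the barycentric subdivision of $\Delta$ is again a subdivision of $\Delta$. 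Thus after this step I have a rational triangulation $\Theta'$ of $Q$ with $\Theta'_{P}=\{S\in\Theta'\mid \ver(S)\subseteq P\}$ a subdivision of $\Delta$.

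Next I would desingularize. By the standard regularization of rational complexes via iterated stellar subdivisions (see \cite[Ch.~V]{Ew1996}) there is a finite chain of blow-ups $\Theta'=\Theta_{0},\Theta_{1},\dots,\Theta_{k}=\Delta_{Q}$ ending in a regular triangulation $\Delta_{Q}$ of $Q$. Each blow-up is a subdivision, so $\Delta_{Q}$ refines $\Theta'$; consequently $\Delta_{P}:=\{S\in\Delta_{Q}\mid S\subseteq P\}$ refines $\Theta'_{P}$, which already subdivides $\Delta$. Since subdivision is transitive and $\Theta'_P$ has support $P$, it follows that $\Delta_{P}$ is a regular subdivision of $\Delta$, proving the first assertion.

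The ``moreover'' clause requires that fullness of the trace on $P$ survive every blow-up, and this is where I expect the real work to lie. I would show that if $\Theta_{i}$ satisfies $\{S\mid S\subseteq P\}=\{S\mid\ver(S)\subseteq P\}$ then so does $\Theta_{i+1}=(\Theta_{i})_{(p)}$. Let $S_{0}$ be the carrier of the blow-up point $p$, i.e.\ the unique simplex with $p\in\operatorname{relint}(S_{0})$. Fullness of $\Theta_{i}$ gives $P\cap T=\conv(\ver(T)\cap P)$ for every $T\in\Theta_{i}$, so $p\in P$ exactly when $S_{0}\subseteq P$. If $S_{0}\not\subseteq P$ then $p\notin P$, every new simplex through $p$ carries the vertex $p\notin P$, and the remaining simplexes are faces of old ones, so fullness is inherited. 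If $S_{0}\subseteq P$ then $p\in P$, and for a new simplex $\conv(F\cup\{p\})$ with all vertices in $P$ one has $F\subseteq P$ by hypothesis; hence $F$ and $S_{0}$ both lie in the face $G=\conv(\ver(T)\cap P)$ of the relevant old simplex $T\supseteq S_{0}$, and convexity of $G$ forces $\conv(F\cup\{p\})\subseteq G\subseteq P$. Iterating along the chain yields fullness of $\Delta_{Q}$.

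Everything outside this last step is an assembly of standard ingredients (the PL extension theorem, barycentric subdivision, and the regularization of rational complexes). The single delicate point, and the main obstacle, is the stability of fullness under an arbitrary blow-up, which hinges on the dichotomy ``$p\in P$ if and only if the carrier of $p$ lies in $P$'' supplied by the inductive fullness hypothesis.
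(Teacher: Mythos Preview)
Your proof is correct and follows the same overall architecture as the paper's: produce a rational triangulation of $Q$ in which the trace on $P$ is a full subcomplex subdividing $\Delta$, then desingularize by iterated blow-ups and argue that fullness survives. The implementations differ only in minor ways. To obtain the initial compatible triangulation, the paper starts from an arbitrary rational triangulation of $Q$ and refines it (via \cite[Addendum~2.12]{RS1972}) until it subdivides $\Delta$, whereas you extend $\Delta$ directly to $Q$ by the PL extension theorem. For fullness, the paper invokes \cite[Lemma~3.4]{RS1972} together with Beynon's rationality result \cite{Be1977}, while you use a single barycentric subdivision---equally valid and arguably more transparent. Most notably, for the preservation of fullness under desingularization the paper simply asserts that since the regular triangulation ``is obtained from $\nabla_2$ by blow-ups'' the full-subcomplex property is inherited; you actually supply the argument via the carrier dichotomy $p\in P\Leftrightarrow S_0\subseteq P$ and the identity $P\cap T=\conv(\ver(T)\cap P)$ that fullness provides. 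That is precisely the verification the paper's one-line claim needs, so on that point your version is in fact more complete.
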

\begin{proof}
   Let $\nabla_0$  be a rational triangulation of $Q$.
  From \cite[Addendum 2.12]{RS1972} we obtain
a triangulation  $\nabla_1$ of $Q$ which is
a  subdivision   of $\nabla_0$ and also satisfies
$S=\bigcup\{T\in \nabla_1 \mid T\subseteq S\}$,
  for each $S\in\Delta$.
  By \cite[Lemma 3.4]{RS1972}, there is a
  subdivision $\nabla_2$ of $\nabla_1$ such
  that $\{T\in \nabla_2\mid T\subset P\}$
  is a full subcomplex of $\nabla_2$. By \cite[Corollary, p. 242]{Be1977},
    there is no loss of generality
to assume that $\nabla_2$ is rational.

   The  desingularization process described in
    \cite[Chapter 9]{CDM2000}
   then yields
    a regular triangulation $\nabla_3$ which is a subdivision of
    $\nabla_2$.  Since $\nabla_3$ is obtained form $\nabla_2$ by blow-ups,
    then   $\{T\in \nabla_3\mid T\subset P\}$ is a
    regular triangulation of $P$ which is also
    a  subdivision of $\Delta$ and a
    full subcomplex of $\nabla_3$.
\end{proof}

\begin{lemma}\label{Lem_Sub-Triang-Subset}
   Let $P\subseteq Q\subseteq\R^{n}$ be rational polyhedra,
   and $\Delta_P$ and $\Delta_Q$  be  regular triangulations of $P$ and $Q$
   such that $\Delta_P\subseteq \Delta_Q$.
   If $\nabla_P$ is a regular subdivision of $\Delta_P$
   then there exists a regular triangulation $\nabla_Q$  of $Q$ such that
  $\nabla_P\subseteq \nabla_Q$
  and $\nabla_Q$   is a subdivision of $\Delta_Q$.
\end{lemma}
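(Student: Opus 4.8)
The plan is to produce $\nabla_Q$ in two stages: first extend $\nabla_P$ to an \emph{a priori non-regular} subdivision of $\Delta_Q$ that, inside $P$, agrees exactly with $\nabla_P$, and then desingularize this extension by Farey blow-ups carried out \emph{only outside} $P$, so that the part sitting inside $P$ is never touched and stays equal to $\nabla_P$. Before starting it is convenient to record one observation. Since $\Delta_P\subseteq\Delta_Q$ and $\Delta_P$ triangulates $P$, every point $x\in P$ has the same carrier (the unique simplex whose relative interior contains $x$) in $\Delta_P$ and in $\Delta_Q$: its $\Delta_P$-carrier lies in $\Delta_Q$, and uniqueness of the carrier in $\Delta_Q$ forces the two to coincide. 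Hence $\Delta_P=\{S\in\Delta_Q\mid S\subseteq P\}$, and more generally a simplex $R$ of any subdivision of $\Delta_Q$ satisfies $R\subseteq P$ if and only if its $\Delta_Q$-carrier lies in $\Delta_P$.

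First I would extend $\nabla_P$. By the standard theorem on extending a subdivision of a subcomplex to a subdivision of the whole complex (see \cite{RS1972}), there is a triangulation $\Sigma_Q$ of $Q$ which is a subdivision of $\Delta_Q$ and agrees with $\nabla_P$ on the subcomplex $\Delta_P$; arguing as in the proof of Lemma \ref{Lem_Triang-Subset}, and using \cite[Corollary, p.~242]{Be1977}, we may take $\Sigma_Q$ rational. By the observation above, a simplex $R\in\Sigma_Q$ lies in $P$ exactly when its $\Delta_Q$-carrier lies in $\Delta_P$, and then $R$ belongs to the subdivision of that carrier, i.e. to $\nabla_P$. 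Thus $\{S\in\Sigma_Q\mid S\subseteq P\}=\nabla_P$.

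Next I would make $\Sigma_Q$ regular by the desingularization of \cite[Chapter~9]{CDM2000}, which refines $\Sigma_Q$ through a finite sequence of Farey blow-ups applied to its non-regular simplexes. The key point is that this process never alters the part of $\Sigma_Q$ lying in $P$: every simplex contained in $P$ belongs to $\nabla_P$ and is therefore already regular, so each non-regular simplex $T$ that gets blown up satisfies $T\not\subseteq P$. Consequently the Farey mediant of $T$ lies outside $P$, no simplex contained in $P$ belongs to the star of $T$, and the blow-up produces no new simplex inside $P$. Iterating, $\nabla_P$ is preserved throughout, and we arrive at a regular triangulation $\nabla_Q$ which subdivides $\Sigma_Q$, hence subdivides $\Delta_Q$ by transitivity, and which still satisfies $\{S\in\nabla_Q\mid S\subseteq P\}=\nabla_P$. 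In particular $\nabla_P\subseteq\nabla_Q$, as required.

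The main obstacle is the extension step together with the verification that $\Sigma_Q$ restricts to \emph{exactly} $\nabla_P$ rather than to some proper refinement of it; this is precisely where the identity $\Delta_P=\{S\in\Delta_Q\mid S\subseteq P\}$ does the work, since $\Delta_P$ need not be a full subcomplex of $\Delta_Q$ when $P$ is non-convex. A tempting alternative is to connect $\Delta_P$ to $\nabla_P$ via Lemma \ref{lemma:weakoda} and lift each elementary move to $Q$ by blowing up, or down, at the same Farey mediant: a single Farey blow-up at the mediant of a simplex $S\in\Delta_P\subseteq\Delta_Q$ does lift cleanly, because the carrier argument shows the blow-up of $\Delta_Q$ at that mediant restricts on $P$ to the blow-up of $\Delta_P$. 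The difficulty is that the path supplied by weak Oda may contain blow-downs, and a sequence containing blow-downs need not yield a \emph{subdivision} of $\Delta_Q$. The route above sidesteps this entirely, since desingularization proceeds by blow-ups only.
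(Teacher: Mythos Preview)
Your argument is correct, and it shares with the paper the decisive final step: once you have a rational (possibly non-regular) triangulation of $Q$ that subdivides $\Delta_Q$ and restricts to exactly $\nabla_P$ on $P$, desingularize by Farey blow-ups at non-regular simplices; since every simplex contained in $P$ already lies in $\nabla_P$ and is regular, each mediant lies outside $P$ and $\nabla_P$ survives intact. Where you diverge is in how that intermediate triangulation is produced. You invoke the extension-of-subdivisions theorem from \cite{RS1972} as a black box (made rational via \cite{Be1977}), which is short and clean. The paper instead builds the extension by hand in two moves: first it performs a finite sequence of Farey blow-ups on $\Delta_Q$ to make $\Delta_P$ a \emph{full} subcomplex of the result $\nabla_t$ (eliminating every simplex whose vertices lie in $P$ but which is not itself contained in $P$); once fullness holds, the explicit join construction $U_S=\{\conv(\ver_Q(S)\cup T)\mid T\in\nabla_P,\ T\subseteq S\cap P\}$ over $S\in\nabla_t$ assembles into a simplicial complex $\nabla_0$ with the required properties. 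Your route bypasses the fullness preprocessing entirely at the cost of citing an external result; the paper's route stays self-contained within the Farey/join toolkit already in use. One small remark: your carrier argument for $\{S\in\Sigma_Q\mid S\subseteq P\}=\nabla_P$ is correct but slightly roundabout---once $\nabla_P\subseteq\Sigma_Q$ is known, the identity follows from the very same uniqueness-of-carrier observation you made for $\Delta_P\subseteq\Delta_Q$, applied now inside $\Sigma_Q$, without detouring through $\Delta_Q$-carriers.
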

\begin{proof}
Let $K=\{S\in \Delta_Q\mid \ver(S)\subseteq P\mbox{ and }S\nsubseteq P\}$ and $S$ be a maximal element in $K$.
Then the Farey blow up $(\Delta_Q)_{(v)}=\nabla_1$, where $v$ is the Farey mediant of $S$, is a regular triangulation of $Q$ such that $\Delta_P\subseteq \nabla_1$. By the maximality of $S$ in $K$, $K_1=\{S\in \nabla_1\mid \ver(S)\subseteq P\mbox{ and }S\nsubseteq P\}=K\setminus\{S\}$. Repeating this process, we obtain a sequence of regular complexes $\nabla_1, \ldots, \nabla_{r}$ and a sequence of sets $K=K_0\supseteq K_1\supseteq\cdots\supseteq K_{r}$ where each $\nabla_{k+1}$ is obtained by blowing-up $\nabla_{k}$ at the Farey mediant of some maximal $S$ in $K_{k}$ and $K_{k+1}=K_{k}\setminus S$.
Since $K$ is finite this process terminates at some $t$. By construction,
 $\Delta_P$ is a full subcomplex of $\nabla_t$  and $\nabla_t$ is a subdivision of $\Delta_Q$.

For each $S\in \nabla_t$ we define:
$$
\ver_P(S)=\ver(S)\cap P\ \ \mbox{, }\ \ \ver_Q(S)=\ver(S)\setminus \ver_P(S),\mbox{  and}
$$
$$U_S=\{\conv(\ver_Q(S)\cup T)\mid T\in \nabla_P\mbox{ and }T\subseteq S\cap P\}.$$

Since $\Delta_P$ is a full subcomplex of $\nabla_t$ and $\nabla_Q$ is a subdivision of $\Delta_Q$, it follows that
$\nabla_0=\bigcup\{U_S\mid S\in \nabla_t\}$ is a simplicial complex and a triangulation of $Q$.

An application to $\nabla_0$  of the desingularization procedure
of \cite[1.2]{Mu1988}  yields a regular triangulation
$\nabla_Q$. $\nabla_Q$ is obtained from $\nabla_0$ by a suitable sequence of blow ups at non-regular simplexes. Since $\nabla_P\subseteq \nabla_0$ is regular, none of its simplexes is modified by the application of this procedure. Therefore, $\nabla_P\subseteq\nabla_Q$ and $\nabla_Q$ is a subdivision of $\Delta_Q$, as desired.
\end{proof}

%%%%%%%%%%%%%%%%%%%%%%%%%%%%%%%%%%%%%%%%%%%%%%%%%%%%%%
\subsection{The category of rational polyhedra}
%%%%%%%%%%%%%%%%%%%%%
\begin{definition}(\cite[Definition 3.1]{Mu2011})
   Given a rational polyhedra  $P\subseteq \R^{n}$ a map $\eta\colon P\rightarrow \R^{m}$ is called a {\it \Zed-map}
   if there is a  triangulation $\Delta$  of $P$
   such that over every simplex $T$  of  $\Delta$, $\eta$ coincides
with an affine linear map $\eta_{T}$ with integer coefficients.
\end{definition}

The following lemmas are easy consequences of the definition. For detailed proofs in the case of rational polyhedra contained in some $n$-cube $[0,1]^{n}$ see \cite[\S 3]{Mu2011}.

\begin{lemma}\label{Lem_ImagesZmorph0}
Suppose   $P\subseteq \R^{n}$ and $\eta\colon P\rightarrow R^{m}$ is a \Zed-map.
Then $\eta(P)$ is a polyhedron in $\R^m$.
\end{lemma}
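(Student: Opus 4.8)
The plan is to reduce the statement to the elementary fact that an affine map sends a simplex to the convex hull of the images of its vertices, together with the observation that such a convex hull of rational points is again a rational polyhedron.

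First I would invoke the definition of a \Zed-map to fix a triangulation $\Delta$ of $P$ such that over each simplex $T\in\Delta$ the map $\eta$ coincides with an affine linear map $\eta_{T}$ with integer coefficients. Since $\Delta$ is a finite simplicial complex with $P=|\Delta|=\bigcup_{T\in\Delta}T$, and since $\eta\restrict{T}=\eta_{T}\restrict{T}$ for each $T$, I obtain at once the decomposition
\[
\eta(P)=\bigcup_{T\in\Delta}\eta_{T}(T).
\]
Because $\Delta$ is finite, it suffices to prove that each $\eta_{T}(T)$ is a polyhedron, as a finite union of polyhedra is again one.

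Next I would fix $T=\conv(v_{0},\dots,v_{k})\in\Delta$. As $\eta_{T}$ is affine, it commutes with the formation of convex combinations, so
\[
\eta_{T}(T)=\conv(\eta_{T}(v_{0}),\dots,\eta_{T}(v_{k})).
\]
Recall that every $v_{i}$ is rational (we work throughout with rational triangulations) and that $\eta_{T}$ has integer coefficients; hence each image vertex $\eta_{T}(v_{i})$ is a rational point of $\R^{m}$. Thus $\eta_{T}(T)$ is the convex hull of finitely many rational points, i.e.\ a rational polytope, which in turn is a rational polyhedron: it admits a triangulation into finitely many rational simplexes by the triangulability quoted after Proposition~\ref{proposition:poly}. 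Assembling these over all $T\in\Delta$ exhibits $\eta(P)$ as a finite union of rational simplexes, whence it is a (rational) polyhedron in $\R^{m}$.

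I do not expect any serious obstacle here: the entire argument amounts to noting that a \Zed-map is piecewise affine with respect to a finite triangulation and that affine images of rational simplexes are rational polytopes. The only point requiring (routine) care is that a convex hull of finitely many rational points really is a rational polyhedron even when the image vertices are affinely dependent, so that $\eta_{T}(T)$ may be a degenerate, lower-dimensional simplex; this is absorbed by triangulating the polytope. Note also that integrality of the coefficients of $\eta_{T}$ is used only to guarantee rationality of the image vertices, so the same proof works verbatim for any rational affine $\eta_{T}$.
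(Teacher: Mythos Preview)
Your argument is correct and is exactly the standard one: the paper does not supply a proof here, merely remarking that the lemma is an easy consequence of the definition of a $\Zed$-map and referring to \cite[\S 3]{Mu2011} for the case $P\subseteq[0,1]^n$. Your write-up makes this ``easy consequence'' explicit in precisely the expected way.
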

%\begin{proof}
  % Let $\Delta$  be a triangulation of $P$ such that $\eta$ is
%linear on each simplex of  $\Delta$.
%   %
%   Let $T$ be a simplex of $\Delta$, and let $\{v_1,\ldots,v_r\}$ be
%the vertices of $T$.
%   %
%   Then
%    $\eta(T)=\conv(\eta(v_1),\ldots,\eta(v_r))$,
%   and $\eta(T)$ is the union of simplexes whose vertices are
%contained in $\{\eta(v_1),\ldots,\eta(v_r)\}$.
%   %
%   Since all points
%   $\{v_1,\ldots,v_r\}$ are rational and $\eta$ is linear on $T$ with
%integer coefficients,
%   then
%   all points  $\{\eta(v_1),\ldots,\eta(v_r)\}$ are rational.
%   %
%This shows that $\eta(T)$ is a rational polyhedron.
%   %
%The desired conclusion now follows
%  from the fact that $\eta(P)=\bigcup\{\eta(T)\mid T\in\Delta\}$.
%   \medskip
%
%\end{proof}

\begin{lemma}\label{Lem_StableTriangul}
   Let $\eta\colon P\rightarrow Q$ be a \Zed-map and
   $\Delta$  a regular triangulation of $P$.
   Then there exists a regular triangulation  $\nabla$ of $P$
   such that $\nabla$ is a subdivision of $\Delta$ and $\eta$ is
linear over each simplex in $\nabla$.
\end{lemma}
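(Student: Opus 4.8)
The plan is to build $\nabla$ as a regular refinement of a common subdivision of $\Delta$ and a triangulation that witnesses the \Zed-map property of $\eta$. The guiding principle is that everything I need to preserve---being a subdivision of $\Delta$, and $\eta$ being integer-affine on each piece---is automatically inherited under passing to subdivisions, so the whole argument reduces to (a) producing a suitable common refinement and (b) desingularizing it by blow-ups only.

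First I would invoke the definition of a \Zed-map to fix a triangulation $\Sigma$ of $P$ such that on each simplex $T\in\Sigma$ the map $\eta$ coincides with an integer affine map $\eta_{T}$; by the running convention $\Sigma$ is rational. Now $\Delta$ and $\Sigma$ are two (rational) triangulations of the same polyhedron $P$. I would then form a common subdivision $\Gamma$ of $\Delta$ and $\Sigma$ in the standard way: consider the cells $S\cap T$ with $S\in\Delta$ and $T\in\Sigma$, and triangulate each such cell compatibly on shared faces. Each $S\cap T$ is the intersection of two rational simplexes, hence a rational polytope, so by \cite[Theorem 1]{Be1977} it admits a rational triangulation; assembling these yields a rational simplicial complex $\Gamma$ with $|\Gamma|=P$ that simultaneously subdivides $\Delta$ and $\Sigma$. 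Because every simplex of $\Gamma$ lies inside some $T\in\Sigma$, and the restriction of the affine map $\eta_{T}$ to a subsimplex is again affine, $\eta$ is integer-affine over every simplex of $\Gamma$, while $\Gamma$ refines $\Delta$.

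Since $\Gamma$ need not be regular, I would finish by desingularizing: applying the procedure of \cite[Chapter 9]{CDM2000} (equivalently \cite[1.2]{Mu1988}) converts $\Gamma$ into a regular triangulation $\nabla$ through a finite sequence of blow-ups. As each blow-up is a stellar subdivision, $\nabla$ is a subdivision of $\Gamma$; by transitivity of subdivision, $\nabla$ refines $\Delta$. Moreover every simplex of $\nabla$ sits inside a simplex of $\Gamma$, hence inside some $T\in\Sigma$, so $\eta$ still coincides with $\eta_{T}$ there. Thus $\nabla$ is a regular triangulation subdividing $\Delta$ over whose simplexes $\eta$ is linear, as required.

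The routine ingredients---existence and rationality of the common subdivision, and invariance of affineness under restriction to subsimplexes---are immediate. The step carrying the real weight is the desingularization, but this is exactly the cited result; what makes it harmless here is the elementary yet essential observation that desingularization operates purely by blow-ups and therefore can only refine $\Gamma$, never coarsen it. This is precisely what preserves both the ``subdivision of $\Delta$'' property and the piecewise linearity of $\eta$ simultaneously, and it is the point I would be most careful to state explicitly.
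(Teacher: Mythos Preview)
Your proposal is correct and follows the standard argument. The paper itself does not give a proof of this lemma: it is listed among several ``easy consequences of the definition'' with a reference to \cite[\S 3]{Mu2011} for detailed proofs, so there is no in-paper proof to compare against beyond noting that your common-subdivision-plus-desingularization route is precisely the expected one (and uses the same tools---\cite{Be1977}, \cite[Addendum 2.12]{RS1972}, \cite[1.2]{Mu1988}---that the paper invokes elsewhere for analogous steps).
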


\begin{lemma}\label{Lem_CarZed}
   Given polyhedra $P\subseteq \R^{n}$, $Q\subseteq \R^{m}$ and
    a map $\eta\colon P\rightarrow Q$, let
   $\xi_1,\ldots,\xi_m\colon Q\rightarrow \R$ denote the coordinate maps.
Then
   $\eta$ is a $\Zed$-map iff $\xi_i\circ\eta:P\rightarrow
\xi_i(P)$ is a \Zed-map for each $i=1,\ldots,m$.
\end{lemma}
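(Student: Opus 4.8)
The plan is to prove the two implications separately, with the forward direction being essentially immediate and the converse resting on a common refinement of triangulations.

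For the forward implication, suppose $\eta$ is a $\Zed$-map, witnessed by a triangulation $\Delta$ of $P$ over each simplex $T$ of which $\eta$ agrees with an affine linear map $\eta_T$ with integer coefficients. Fix $i$. Writing $\eta_T$ in coordinates, its $i$-th component $\xi_i\circ\eta_T$ is again affine linear with integer coefficients, and it coincides with $\xi_i\circ\eta$ on $T$. Hence the very same triangulation $\Delta$ witnesses that $\xi_i\circ\eta$ is a $\Zed$-map, for every $i=1,\ldots,m$.

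For the converse, assume that for each $i$ there is a triangulation $\Delta_i$ of $P$ over whose simplexes $\xi_i\circ\eta$ is affine linear with integer coefficients. The key step is to produce a single triangulation $\Delta$ of $P$ that simultaneously subdivides all of $\Delta_1,\ldots,\Delta_m$. I would obtain such a common subdivision by the standard polyhedral construction: form the cellular intersection of the $\Delta_i$, whose cells are the nonempty intersections $T_1\cap\cdots\cap T_m$ with $T_i\in\Delta_i$ (each a rational polyhedron, since all the $\Delta_i$ are rational), and triangulate it rationally, using the convention of this paper that every polyhedron admits a rational triangulation. Any simplex $T$ of $\Delta$ is then contained in some simplex of each $\Delta_i$, so the restriction of $\xi_i\circ\eta$ to $T$ is the restriction of an affine linear map with integer coefficients, and is therefore itself affine linear with integer coefficients on $T$.

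It remains to reassemble the coordinates. Over each $T\in\Delta$ the map $\eta=(\xi_1\circ\eta,\ldots,\xi_m\circ\eta)$ is given componentwise by affine linear maps with integer coefficients, and a map into $\R^{m}$ all of whose components are integer affine is itself integer affine. Thus $\eta$ agrees with an affine linear map with integer coefficients over every simplex of $\Delta$, so $\eta$ is a $\Zed$-map. The only genuine obstacle is the existence of the common rational subdivision $\Delta$; once this is secured the rest is purely formal, so in the write-up I would concentrate on justifying that a finite family of rational triangulations of the same polyhedron admits a common rational subdivision, either by iterating a two-triangulation common refinement or by invoking the subdivision results already used in the proof of Lemma \ref{Lem_Triang-Subset}.
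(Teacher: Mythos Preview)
Your proof is correct and is precisely the natural argument from the definition: the forward direction is immediate by taking components, and the converse is obtained by passing to a common rational subdivision of the triangulations $\Delta_1,\ldots,\Delta_m$ and reassembling the integer-affine components. The paper does not give its own proof of this lemma (it is listed among ``easy consequences of the definition'' with a reference to \cite[\S 3]{Mu2011}), so your write-up is exactly what is expected here; the only point worth spelling out, as you note, is the existence of a common rational refinement, which follows by iterating the two-triangulation case.
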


\begin{lemma}\label{Lem_ZeroP}
    Given a rational polyhedron  $P\subseteq \R^{n}$. A subset $P\subseteq Q$ is a rational polyhedron
    iff
    there exists a \Zed-map $f\colon Q\rightarrow \R$ such
that $P=f^{-1}(0)$.
\end{lemma}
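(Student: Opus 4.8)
The plan is to prove the two implications separately, treating $Q$ as the ambient rational polyhedron on which the $\Zed$-map lives and $P\subseteq Q$ as the candidate subset (this is forced anyway, since a $\Zed$-map is by definition given on a rational polyhedron). For the implication from right to left, I would start from a $\Zed$-map $f\colon Q\to\R$ with $P=f^{-1}(0)$. By the definition of a $\Zed$-map there is a triangulation $\Delta$ of $Q$ over each simplex $T$ of which $f$ agrees with an affine map $f_T$ having integer coefficients. Then $P=f^{-1}(0)=\bigcup_{T\in\Delta}\bigl(T\cap f_T^{-1}(0)\bigr)$, and each set $T\cap f_T^{-1}(0)$ is either $T$ itself (when $f_T\equiv 0$) or the intersection of the rational simplex $T$ with the rational hyperplane $\{f_T=0\}$; in both cases it is a rational polyhedron. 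A finite union of rational polyhedra being a rational polyhedron, $P$ is a rational polyhedron. This direction is routine and uses only the definition of a $\Zed$-map.

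For the converse, suppose $P\subseteq Q$ is a rational polyhedron. The key tool is Lemma~\ref{Lem_Triang-Subset}: applied to $P\subseteq Q$ it yields a regular triangulation $\Delta_Q$ of $Q$ in which $\Delta_P=\{S\in\Delta_Q\mid S\subseteq P\}$ is a \emph{full} subcomplex triangulating $P$, meaning $\Delta_P=\{S\in\Delta_Q\mid \ver(S)\subseteq P\}$. I would then define $f\colon Q\to\R$ on the vertices of $\Delta_Q$ by $f(w)=0$ if $w\in P$ and $f(w)=1/\den(w)$ if $w\notin P$, and extend $f$ affinely over each simplex of $\Delta_Q$. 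That $f$ is a $\Zed$-map follows from regularity: over a simplex $S=\conv(v_0,\dots,v_m)\in\Delta_Q$ the prescribed integers $\den(v_i)f(v_i)\in\{0,1\}$ can be realized as the values $L(\widetilde{v_i})$ of an integer linear functional $L$ on $\Zed^{n+1}$, since $\widetilde{v_0},\dots,\widetilde{v_m}$ extend to a basis of $\Zed^{n+1}$; the affine map $x\mapsto L(x,1)$ then has integer coefficients and agrees with $f$ on $S$.

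Finally I would verify $f^{-1}(0)=P$. Since all vertex values are non-negative, the affine interpolation is non-negative on each simplex, so $f\geq 0$. For a point $x\in Q$ let $S'$ be its minimal carrier in $\Delta_Q$, so that $x$ has strictly positive barycentric coordinates $\lambda_v$ with respect to $\ver(S')$ and $f(x)=\sum_{v\in\ver(S')}\lambda_v f(v)$. If $x\notin P$ then, were every vertex of $S'$ in $P$, fullness would give $S'\in\Delta_P$ and hence $x\in S'\subseteq P$, a contradiction; thus some vertex $v\notin P$ occurs with $\lambda_v>0$, forcing $f(x)\geq \lambda_v/\den(v)>0$. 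Conversely if $x\in P=|\Delta_P|$ then $S'$, being a face of a simplex of the subcomplex $\Delta_P$, lies in $\Delta_P$, so every vertex of $S'$ is in $P$ and $f(x)=0$.

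I expect the main obstacle to be exactly this last step: ensuring that $f$ vanishes \emph{only} on $P$. This is precisely why Lemma~\ref{Lem_Triang-Subset} is invoked to make $\Delta_P$ a full subcomplex — without fullness a point off $P$ could sit in a simplex all of whose vertices lie in $P$, and the interpolant $f$ would then vanish there spuriously. The regularity of $\Delta_Q$, by contrast, is what guarantees integer coefficients and hence that $f$ is genuinely a $\Zed$-map rather than merely a piecewise-affine function.
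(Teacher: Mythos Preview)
Your argument is correct and complete. The paper itself does not give a proof but merely cites \cite[Propositions 5.1 and 5.2]{MM2007} and \cite[Lemma 3.2]{MuXXXX} for the case $Q=[0,1]^n$ and asserts that the same argument goes through for arbitrary $Q$; your write-up is precisely that argument made explicit, with Lemma~\ref{Lem_Triang-Subset} supplying the full regular triangulation and Corollary~\ref{Cor_ExtensionToZed} (which you rederive directly from regularity) guaranteeing that the vertex assignment $w\mapsto 0$ or $1/\den(w)$ extends to a $\Zed$-map.
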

\begin{proof}
  In \cite[Propositions 5.1 and 5.2]{MM2007} and  \cite[Lemma 3.2]{MuXXXX}, the result was proven for the case $P\subseteq [0,1]^n$. The same argument replacing $[0,1]^n$ by an arbitrary rational polyhedron $Q$ proves the result of this lemma.
\end{proof}

We denote by $\QP$  the category
  whose objects are rational polyhedra in
$\mathbb R^n$\,\,\,({\rm for}\,\, $n=1,2,\ldots),$
and whose arrows are $\Zed$-maps.

Following  \cite[Definition 4.4]{MuXXXX} and \cite[Definition 3.9]{Mu2011},   a map $\eta\colon P\rightarrow Q$ is a {\it  \Zed-homeomorphism}  if
   it is a one-one  \Zed-map of $P$  onto $Q$ and its inverse
$\eta^{-1}$ is also a \Zed-map.
Thus  {$\Zed$-homeomorphisms} are the same
as   iso-arrows of the category $\QP$.

% A proof of the following can be obtained by translating the proof
%of \cite[Theorem 3.4]{MS201X} from the language of MV-algebras into
%the language of unital $\ell$-groups
% via the equivalence proven in \cite{Mu1986} (see also \cite{MS201Xa}).

 \medskip
  A proof of the following  result can be obtained from
\cite[\S 3]{Mu2011}:

\begin{theorem}[\bf Duality]\label{Theo_Baker-Beynon}
Let the functor $\McN\colon \QP\rightarrow \FP$ be defined by
   $$
     \begin{tabular}{ll}
       Objects:& For  $P\in \QP$  a polyhedron,\\& $\McN(P)$ is the set of
       all     \,\Zed-maps
       from   $P$ into $\R$.\\[0.2cm]
       Arrows:& For $\eta\colon P\rightarrow Q$  a \Zed-map,\\&
$\McN(\eta)(f)=f\circ \eta$,
        for each $f\in \McN(Q)$.
     \end{tabular}
   $$
Then $\McN$ yields a duality between the
categories $\QP$ and $\FP$.
Stated otherwise,  $\McN$ is a categorical
equivalence between $\QP$ and the opposite category of $\FP$.
\end{theorem}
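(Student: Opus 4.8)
The plan is to establish the duality by constructing the functor $\McN$ explicitly, verifying it is well-defined, and then producing a quasi-inverse together with natural isomorphisms. First I would check that $\McN$ is well-defined on objects: for a rational polyhedron $P \subseteq \R^n$, the set of $\Zed$-maps $P \to \R$ forms a unital $\ell$-group under the pointwise operations $+,-,\max,\min$, with the constant function $1$ as order unit. That this is a unital $\ell$-group is routine; the substantive point is that it is \emph{finitely presented}. Here I would invoke Lemma \ref{Lem_ZeroP}: writing $P = f^{-1}(0)$ for a $\Zed$-map $f \colon \cube \to \R$ (after embedding $P$ into a cube via Lemma \ref{Lem_Triang-Subset} or a suitable bounding), the map $g \mapsto g\restrict{P}$ gives a surjection $\McNn \to \McN(P)$ whose kernel is the principal ideal generated by $|f|$, so $\McN(P) \cong \McNn/\mathfrak{j}$ is finitely presented by the characterization following Proposition \ref{proposition:free}.

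Next I would verify functoriality. For a $\Zed$-map $\eta \colon P \to Q$, the assignment $f \mapsto f \circ \eta$ sends $\Zed$-maps $Q \to \R$ to $\Zed$-maps $P \to \R$ (composition of $\Zed$-maps is a $\Zed$-map, using Lemma \ref{Lem_StableTriangul} to find a common refining triangulation on which both are linear), and this map is a unital $\ell$-homomorphism because all operations are pointwise and $\eta$ is fixed. Contravariance and preservation of identities and composition are then immediate from the definition $\McN(\eta)(f) = f \circ \eta$.

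The heart of the proof is showing $\McN$ is a duality, i.e. full, faithful, and essentially surjective (equivalently, producing a quasi-inverse). For essential surjectivity, given a finitely presented $(G,u) \cong \McNn/\mathfrak{j}$, I would use Lemma \ref{Lem_ZeroP} to realize $\mathfrak{j}$ as the ideal of functions vanishing on a rational polyhedron $P \subseteq \cube$, and then exhibit a unital $\ell$-isomorphism $\McNn/\mathfrak{j} \cong \McN(P)$ via restriction. For fullness and faithfulness, the key is the bijective correspondence between $\Zed$-maps $\eta \colon P \to Q$ and unital $\ell$-homomorphisms $\McN(Q) \to \McN(P)$. Given such a homomorphism $h$, the coordinate maps $\xi_1,\dots,\xi_m \in \McN(Q)$ are sent to $\Zed$-maps $h(\xi_i) \in \McN(P)$, and by Lemma \ref{Lem_CarZed} these assemble into a single $\Zed$-map $\eta \colon P \to \R^m$ with $\xi_i \circ \eta = h(\xi_i)$; the fact that $\eta$ lands in $Q$ (not merely $\R^m$) follows because $h$ respects the relations defining $Q$ as a zero-set, via Lemma \ref{Lem_ZeroP}. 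One checks $\McN(\eta) = h$ on generators, hence everywhere, giving fullness, and the uniqueness of $\eta$ determined by its coordinates gives faithfulness.

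The main obstacle I anticipate is the fullness step: recovering a geometric $\Zed$-map from a purely algebraic homomorphism and verifying that its image actually lies in $Q$ rather than spilling outside. This requires carefully combining the characterization of $\Zed$-maps coordinatewise (Lemma \ref{Lem_CarZed}) with the zero-set description of polyhedra (Lemma \ref{Lem_ZeroP}), and checking that the algebraic constraints imposed by the homomorphism on the generators of $\McN(Q)$ translate precisely into the geometric constraint $\eta(P) \subseteq Q$. The remaining verifications --- that the constructed correspondences are mutually inverse and natural --- are then essentially formal, following from the universal property in Proposition \ref{proposition:free} once the generator-level agreement is established.
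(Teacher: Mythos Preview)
Your proposal is a correct and detailed outline of the standard proof of this duality, but you should be aware that the paper itself does \emph{not} prove this theorem: it simply states that ``a proof of the following result can be obtained from \cite[\S 3]{Mu2011}'' and gives no argument at all. So there is nothing in the paper to compare your argument against; you have supplied what the paper deliberately omits.

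That said, your sketch is sound. A couple of minor points worth tightening: first, when you show $\McN(P)$ is finitely presented by writing $P=f^{-1}(0)$ inside a cube, note that the paper's polyhedra live in $\R^n$ rather than $[0,1]^n$, so you need the observation (made just after \eqref{Eq:IsoinCube}) that every rational polyhedron is $\Zed$-homeomorphic to one inside a cube; Lemma~\ref{Lem_Triang-Subset} alone does not give you this embedding. Second, in the fullness step, when you assemble $h(\xi_1),\dots,h(\xi_m)$ into $\eta\colon P\to\R^m$ and then argue $\eta(P)\subseteq Q$, the cleanest route is exactly the one you indicate: pick $g\in\McN(Q)$ with $Q=g^{-1}(0)$ inside the ambient cube (Lemma~\ref{Lem_ZeroP}), observe that $h(g)=0$ in $\McN(P)$ because $h$ is a unital $\ell$-homomorphism and $g$ lies in the ideal of relations, and conclude $g\circ\eta=0$, i.e.\ $\eta(P)\subseteq g^{-1}(0)=Q$. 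This is routine once stated, so your identification of it as the ``main obstacle'' is perhaps slightly pessimistic, but the argument is correct.
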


\begin{lemma}\label{Lem_ImagesZmorph}
Suppose   $P\subseteq \R^{n}$ and $Q\subseteq \R^{m}$
  are  polyhedra and $\eta\colon P\rightarrow Q$ is a \Zed-map.
We then have $$\McN(Q)/{\rm ker}(\McN(\eta))\cong\McN(\eta(P)).$$
\end{lemma}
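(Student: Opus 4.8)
The plan is to factor $\eta$ through its image and then apply the first isomorphism theorem for unital $\ell$-groups. Write $R=\eta(P)$, which is a rational polyhedron by Lemma \ref{Lem_ImagesZmorph0}, so that $\McN(R)\in\FP$ is defined. Since $\eta$ maps $P$ onto $R$, it factors as $\eta=\iota\circ e$, where $e\colon P\to R$ is the corestriction of $\eta$ (a surjective \Zed-map) and $\iota\colon R\hookrightarrow Q$ is the inclusion, which is a \Zed-map as the restriction of the identity. Because $\McN$ is a contravariant functor (Theorem \ref{Theo_Baker-Beynon}), this factorization yields $\McN(\eta)=\McN(e)\circ\McN(\iota)$, where $\McN(\iota)\colon\McN(Q)\to\McN(R)$ is the restriction map $f\mapsto f\restrict{R}$ and $\McN(e)\colon\McN(R)\to\McN(P)$ is $g\mapsto g\circ e$. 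By the first isomorphism theorem, $\McN(Q)/\ker(\McN(\eta))\cong\mathrm{Im}(\McN(\eta))$, so it suffices to prove $\mathrm{Im}(\McN(\eta))\cong\McN(R)$.

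First I would record the easy half. The map $\McN(e)$ is injective: if $g_1\circ e=g_2\circ e$ then, since $e$ is onto $R$, we get $g_1=g_2$. The decisive point is that $\McN(\iota)$ is \emph{surjective}, i.e. that every \Zed-map on $R$ extends to a \Zed-map on $Q$. Granting this, each element of $\mathrm{Im}(\McN(\eta))$ has the form $\McN(e)(f\restrict{R})=(f\restrict{R})\circ e$, and as $f$ ranges over $\McN(Q)$ the restriction $f\restrict{R}=\McN(\iota)(f)$ ranges over all of $\McN(R)$; hence $\mathrm{Im}(\McN(\eta))=\McN(e)(\McN(R))$, which by injectivity of $\McN(e)$ is isomorphic to $\McN(R)=\McN(\eta(P))$. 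This closes the argument.

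The hard part is therefore the surjectivity (extension) statement for $\McN(\iota)$, and here I would invoke the triangulation machinery. By Lemma \ref{Lem_Triang-Subset} choose a regular triangulation $\Delta_Q$ of $Q$ for which $\Delta_R=\{S\in\Delta_Q\mid S\subseteq R\}$ is simultaneously a regular triangulation of $R$ and a full subcomplex of $\Delta_Q$. Given $g\in\McN(R)$, Lemma \ref{Lem_StableTriangul} produces a regular subdivision $\nabla_R$ of $\Delta_R$ over each simplex of which $g$ is affine with integer coefficients; Lemma \ref{Lem_Sub-Triang-Subset} then extends $\nabla_R$ to a regular subdivision $\nabla_Q$ of $\Delta_Q$ with $\nabla_R\subseteq\nabla_Q$. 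Now define $f\colon Q\to\R$ to be affine over each simplex of $\nabla_Q$, with $f(v)=g(v)$ at every vertex $v\in\ver(\nabla_R)\subseteq R$ and $f(v)=0$ at all remaining vertices of $\nabla_Q$. Because $\nabla_R$ remains a full subcomplex of $\nabla_Q$, one gets $f\restrict{R}=g$; and since $\nabla_Q$ is regular while $\den(v)\,f(v)\in\Zed$ at every vertex $v$, the criterion coming from the definition of regular simplex (homogeneous correspondents of the vertices forming part of a basis of $\Zed^{n+1}$) shows that $f$ is a \Zed-map. Thus $\McN(\iota)(f)=g$ and $\McN(\iota)$ is onto.

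The only delicate bookkeeping is that fullness of the subcomplex is preserved under the subdivision of Lemma \ref{Lem_Sub-Triang-Subset} and that the chosen vertex-value assignment does produce a genuine \Zed-map; both are consequences of working throughout with regular triangulations, so I expect the extension step to be the main obstacle while the categorical reduction is routine.
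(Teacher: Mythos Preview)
Your proof is correct and follows the same strategy as the paper's: both reduce to the restriction map $h\colon\McN(Q)\to\McN(\eta(P))$, $f\mapsto f\restrict{\eta(P)}$, observing that $\ker h=\ker\McN(\eta)$ and invoking the first isomorphism theorem. The paper's argument is terser---it simply asserts that $h$ is onto---whereas you supply a full proof of this extension property via Lemmas \ref{Lem_Triang-Subset}, \ref{Lem_Sub-Triang-Subset}, \ref{Lem_StableTriangul}; your residual worry about fullness of $\nabla_R$ in $\nabla_Q$ is in fact unnecessary for $f\restrict R=g$, since $\nabla_R\subseteq\nabla_Q$ already forces $\nabla_R=\{S\in\nabla_Q\mid S\subseteq R\}$ (any $S\in\nabla_Q$ with $S\subseteq R$ meets some $T\in\nabla_R$ in a common face containing a relative interior point of $S$, hence $S\subseteq T$).
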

\begin{proof}
  % (i) Let $\Delta$  be a triangulation of $P$ such that $\eta$ is
%linear on each simplex of  $\Delta$.
%   %
%   Let $T$ be a simplex of $\Delta$, and let $\{v_1,\ldots,v_r\}$ be
%the vertices of $T$.
%   %
%   Then
%    $\eta(T)=\conv(\eta(v_1),\ldots,\eta(v_r))$,
%   and $\eta(T)$ is the union of simplexes whose vertices are
%contained in $\{\eta(v_1),\ldots,\eta(v_r)\}$.
%   %
%   Since all points
%   $\{v_1,\ldots,v_r\}$ are rational and $\eta$ is linear on $T$ with
%integer coefficients,
%   then
%   all points  $\{\eta(v_1),\ldots,\eta(v_r)\}$ are rational.
%   %
%This shows that $\eta(T)$ is a rational polyhedron.
%   %
%The desired conclusion now follows
%  from the fact that $\eta(P)=\bigcup\{\eta(T)\mid T\in\Delta\}$.
%   \medskip
%
 Observe that $f\in {\rm ker}(\McN(\eta))$ iff
$f\circ \eta (P)=f(\eta(P))=\{0\}$.
   Therefore the kernel of the onto map $h\colon\McN(Q)\rightarrow
\McN(\eta(P))$
given  by $f\mapsto f\upharpoonright \eta(P)$,
   coincides with ${\rm  ker}(\McN(\eta))$.
In conclusion,
$\McN(Q)/{\rm ker}(\McN(\eta))\cong\McN(\eta(P))$
\end{proof}

%% \begin{corollary}\label{Cor_Subalgebras}
%%   Let $(G_1,u_1)$ and $(G_2,u_2)$  be finitely presented unital
%$\ell$-groups,
%%   and $h\colon G_1\rightarrow G_2$  a homomorphism.
%%   %
%%   Then $(h(G_1),u_2)$ is a finitely presented unital $\ell$-group.
%% \end{corollary}
%%
%% \begin{proof}
%%   By Theorem \ref{Theo_Baker-Beynon}, we can assume
%$(G_1,u_1)=\McN(P_1)$ and $(G_2,u_2)=\McN(P_2)$,
%%   for some  polyhedra $P_1\subseteq\R^{n}$ and $P_2\subseteq \R^{m}$.
%%
%%   Let  $\eta\colon P_2\rightarrow P_1$ be the unique \Zed-map such
%that $\McN(\eta)=h$,
%%   as  given by Theorem \ref{Theo_Baker-Beynon}.
%%   %
%%   By Lemma \ref{Lem_ImagesZmorph}(ii),
%%   $$(h(G_1),u_2)\cong G_1/{\rm ker}(h)\cong\McN(P_1)/{\rm
%ker}(\McN(\eta))\cong\McN(\eta(P_2)).$$
%%    %
%%
%%    One more application of Theorem \ref{Theo_Baker-Beynon}
%completes the proof.
%%
%% \end{proof}

%
%%%%%%%%%%%%%%%%%%%%%%%%%%
\subsection{Combinatorics of rational polyhedra}
%%%%%%%%%%%%%%%%%%%%%%%%%%%%

  Building on  \cite{CM20XX} and \cite{MuXXXX},
in this section we introduce a functor
from  the category of abstract simplicial complexes   with weighted
 vertices into the category of rational polyhedra.
This will be used
to  construct
limits and co-limits of rational polyhedra. Using
the dual equivalence of Theorem \ref{Theo_Baker-Beynon}  we will then characterize
finitely generated subalgebras of free unital $\ell$-groups.

\smallskip

%%%%%%%%%%%%%%%%% definition of abstract simplicial complex
Let us recall that a {\it (finite) abstract simplicial complex} is a
pair $({V},\Sigma)$,
where ${V}$ is a finite
  set, whose elements are called the {\it vertices} of $({V},\Sigma)$,
and $\Sigma$ is a collection of subsets of ${\mathscr V}$ whose union
is ${V}$,
having the property that every subset of an element of $\Sigma$ is
again an element of $\Sigma$.
%%%%%%%%%%%%%%%%%%%%%%%%%%%

%%%%%%%%%%%%%% definition of weight abstract simplicial complex
A {\it weighted abstract simplicial complex} is a triple $({
V},\Sigma, \omega)$
where $({ V},\Sigma)$ is an abstract simplicial complex
and $\omega$ is a map of ${ V}$ into the set $ \{1,2,3,\ldots\}.$
%%%%%%%%%%%%%%%%%%%%%%%%%

%%%%%%%%%%%%%%%%%% Definition of d-morphism
Given two weighted abstract simplicial complexes $ \mathfrak{W}=
( V,\Sigma,\omega)$ and $\mathfrak{W}' = (
V',\Sigma',\omega') $
a simplicial map $\gamma\colon  V \rightarrow  V',$
(that is, $\gamma(S)\in \Sigma'$ for each $S\in\Sigma$)
  is a morphism from $\mathfrak{W}$  into $\mathfrak{W}'$,
if  $\omega'(\gamma(v))$ divides $\omega(v)$  for all  $v\in { V}$.
%%%%%%%%%%%%%%%%%%%%%%%

%%%%%%%%%%%%%%%%% Definition of the category WASC
We denote $\wasc$ the category of weighted abstract simplicial complexes.
%%%%%%%%%%%%%%%%%%%%%%%%%%%

%%%%%%%%%%%%%%%%% Definition of combinatorially isomorphic
It is easy to see that two weighted abstract simplicial complexes
$ \mathfrak{W}= ( V,\Sigma,\omega)$ and $\mathfrak{W}' =
( V',\Sigma',\omega')$
are isomorphic in $\wasc$ iff
there is   a one-one map  $\gamma$
  from $ V$ onto ${ V}'$
having the following properties:
\begin{itemize}
\item $\omega'(\gamma(v))=\omega(v)$  for all  $v\in { V}$,
and
\item $\{w_{1},\ldots,w_{k}\}\in \Sigma$ iff
$\{\gamma(w_{1}),\ldots,\gamma(w_{k})\}\in \Sigma'$
for each   $\{w_{1},\ldots,w_{k}\}\subseteq  V$.
\end{itemize}
When this is the case
  we   say that $\mathfrak{W}$ and $\mathfrak{W}'$ are {\it
combinatorially isomorphic}.
%%%%%%%%%%%%%%%%%%%%%%%%%%%%%%

\medskip
%%%%%%%%%%%%%%%%%%%%%%%%% Definition of canonical realization
Let $\mathfrak{W}=({ V},\Sigma, \omega)$ be a weighted
abstract simplicial complex with vertex set  ${
V}=\{v_{1},\ldots,v_{n}\}$.
Let
  $e_{1},\ldots,e_{n}$ be the standard basis vectors of
  $\mathbb R^{n}$.
We then use the notation $\Delta_{\mathfrak{W}}$
for the complex whose vertices are the following points
 in $\mathbb R^n$
$$
   v'_{1} = e_{1}/\omega(v_{1}),\ldots,v'_{n}=e_{n}/\omega(v_{n}),
$$
and whose $k$-simplexes ($k=0,\ldots,n$) are given by
$$
   \conv(v'_{i(0)},\ldots, v'_{i(k)})\in \Delta_{\mathfrak{W}}\quad
\text{ iff }\quad
   \{ v_{i(0)},\ldots, v_{i(k)}\}\in \Sigma.
$$
Trivially, $\Delta_{\mathfrak{W}}$ is a regular triangulation of the
polyhedron $|\Delta_{\mathfrak{W}}|\subseteq [0,1]^{n}$.  The
polyhedron $|\Delta_{\mathfrak{W}}|$ is called the {\it geometric
realization} of $\mathfrak{W}$ and will be denoted
$\Pol(\mathfrak{W})$.

  %%%%%%%%%%%%%%%%%%%%%%%%%%%%%%%%%%%%%

\medskip

In order to extend $\Pol$ to a functor from $\wasc$ into $\QP$
we prepare

\begin{lemma}\cite[Lemma 3.7]{Mu2011}\label{Lem-LinearMap}
   Let  $S=\conv(x_{1},\ldots,x_{k})\subseteq \R^{n}$ be a regular
${(k-1)}$-simplex, and
   $\{y_{1},\ldots,y_{k}\}$ a set of rational points in $\R^{n}.$
   Then the following conditions are equivalent:
   \begin{itemize}
     \item[(i)]  For each $i=1,\ldots,k,$ $\den(y_{i})$  is a divisor
of $\den(x_{i})$. \smallskip
     \item[(ii)] For some integer matrix  $M\in\Zed^{n\times n}$ and
integer vector  $b\in\Zed^{n}$,  $M x_{i}+b=y_{i}.$
   \end{itemize}
\end{lemma}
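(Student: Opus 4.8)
The plan is to pass to homogeneous coordinates, where the affine maps $x\mapsto Mx+b$ become ordinary linear maps and the divisibility hypothesis becomes an integrality statement, and then to exploit the regularity of $S$ to manufacture the required integer matrix by extending a lattice basis.

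I would dispose of the implication (ii)$\Rightarrow$(i) first, as it is routine. Writing $d_i=\den(x_i)$, we have $d_i x_i\in\Zed^n$, so from $y_i=Mx_i+b$ we obtain $d_i y_i=M(d_i x_i)+d_i b\in\Zed^n$. Hence every coordinate of $y_i$ has denominator dividing $d_i$, whence $\den(y_i)\mid\den(x_i)$.

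For (i)$\Rightarrow$(ii) I would reformulate both the conclusion and the hypothesis homogeneously. An affine map $x\mapsto Mx+b$ is the restriction to the hyperplane $\{\text{last coordinate}=1\}$ of the linear endomorphism of $\R^{n+1}$ given by the block matrix $\widehat M=\left(\begin{smallmatrix}M&b\\0&1\end{smallmatrix}\right)$; this $\widehat M$ has integer entries precisely when $M$ and $b$ do, and its last row equals $(0,\dots,0,1)$ exactly when $\pi\circ\widehat M=\pi$, where $\pi\colon\R^{n+1}\to\R$ is the projection onto the last coordinate. Applying $\widehat M$ to $\widetilde{x_i}=\den(x_i)(x_i,1)$ and recalling $\widetilde{y_i}=\den(y_i)(y_i,1)$, condition (ii) becomes the single requirement that there be an integer matrix $\widehat M$ with $\pi\circ\widehat M=\pi$ and $\widehat M\widetilde{x_i}=c_i\,\widetilde{y_i}$ for all $i$, where $c_i=\den(x_i)/\den(y_i)$ is a positive integer by (i). Note the last coordinates already agree, since $c_i\den(y_i)=\den(x_i)$.

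The heart of the argument, and the step I expect to be the main obstacle, is constructing such a $\widehat M$ that is simultaneously integral and of the affine form $\pi\circ\widehat M=\pi$. Here the regularity of $S$ is decisive: by definition $\{\widetilde{x_1},\dots,\widetilde{x_k}\}$ extends to a $\Zed$-basis $\{\widetilde{x_1},\dots,\widetilde{x_k},w_{k+1},\dots,w_{n+1}\}$ of $\Zed^{n+1}$. I would define $\widehat M$ on this basis by $\widehat M\widetilde{x_i}=c_i\widetilde{y_i}$, which lies in $\Zed^{n+1}$ because $\widetilde{y_i}\in\Zed^{n+1}$ and $c_i\in\Zed$, and by $\widehat M w_j=w_j$ for $j>k$; this yields a well-defined integer endomorphism of $\Zed^{n+1}$. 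Since $\pi(\widehat M\widetilde{x_i})=c_i\den(y_i)=\den(x_i)=\pi(\widetilde{x_i})$ and $\pi(\widehat M w_j)=\pi(w_j)$, the linear functionals $\pi\circ\widehat M$ and $\pi$ agree on a basis and hence everywhere, so $\widehat M$ has the required block form, and we read off integer $M$ and $b$. Dividing $\widehat M\widetilde{x_i}=c_i\widetilde{y_i}$ by $\den(x_i)$ gives $\widehat M(x_i,1)=(y_i,1)$, that is $Mx_i+b=y_i$, completing the proof. The freedom to send the complementary basis vectors $w_j$ wherever we like, subject only to preserving the last coordinate, is exactly what regularity buys us; without it $\{\widetilde{x_i}\}$ need not be part of a basis and no such integer matrix need exist.
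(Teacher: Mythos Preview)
Your proof is correct. Note, however, that the paper does not supply its own proof of this lemma: it is quoted verbatim from \cite[Lemma~3.7]{Mu2011}, so there is nothing in the present paper to compare against. The homogenization argument you give---lifting the affine problem to a linear one in $\Zed^{n+1}$, then using regularity to extend $\{\widetilde{x_1},\dots,\widetilde{x_k}\}$ to a full $\Zed$-basis and defining $\widehat M$ freely on the complementary vectors---is the standard route and is essentially the proof one finds in the cited reference. Both implications are handled cleanly; the verification that $\pi\circ\widehat M=\pi$ forces the bottom row of $\widehat M$ to be $(0,\dots,0,1)$, and hence that $\widehat M$ has the block form $\left(\begin{smallmatrix}M&b\\0&1\end{smallmatrix}\right)$ with $M,b$ integral, is exactly the point where the argument could go wrong and you treat it correctly.
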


\begin{corollary}\label{Cor_Div_Denominators}
   Let $P\subseteq \R^{n}$ and $Q\subseteq \R^{m}$ be polyhedra
   and $\eta \colon  P\rightarrow Q$  a $\Zed$-map.
   Then for every rational point $x\in P,\,\,\,\,$
   $
     \den(\eta (x))\mbox{ divides }\den(x).
   $
\end{corollary}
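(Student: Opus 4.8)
The plan is to reduce the whole statement to the behaviour of a single affine integer map on one simplex. By the very definition of a $\Zed$-map there is a triangulation $\Delta$ of $P$ such that over each simplex $T\in\Delta$ the map $\eta$ agrees with an affine map $\eta_T(z)=Mz+b$ with $M\in\Zed^{m\times n}$ and $b\in\Zed^{m}$. (If one prefers to stay inside the regular framework used throughout the paper, Lemma~\ref{Lem_StableTriangul} lets one first replace $\Delta$ by a regular subdivision on whose simplexes $\eta$ is still affine and integral; this refinement is not actually needed for the computation below.) Fixing a rational point $x\in P$, I would pick a simplex $T\in\Delta$ with $x\in T$, so that $\eta(x)=\eta_T(x)=Mx+b$.

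The core is then a one-line denominator computation. Writing $d=\den(x)$, the defining property of the denominator gives $dx\in\Zed^{n}$. Since $M$ has integer entries and $b\in\Zed^{m}$, it follows that $d\,\eta(x)=M(dx)+db\in\Zed^{m}$. Thus $d$ is a common denominator of the coordinates of $\eta(x)$, and since $\den(\eta(x))$ is by definition the \emph{least} such, we conclude that $\den(\eta(x))$ divides $d=\den(x)$, which is exactly the assertion.

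The homogeneous reformulation makes the link with Lemma~\ref{Lem-LinearMap} transparent and is perhaps the cleanest way to record the step: the affine integer map $z\mapsto Mz+b$ lifts to the integer matrix $\widehat M=\left(\begin{smallmatrix}M&b\\0&1\end{smallmatrix}\right)\in\Zed^{(m+1)\times(n+1)}$, and one checks directly that $\widehat M\,\widetilde{x}=\den(x)\,(\eta(x),1)$. As $\widehat M$ is integral and $\widetilde{x}\in\Zed^{n+1}$, the left-hand side lies in $\Zed^{m+1}$, again forcing $\den(\eta(x))$ to divide $\den(x)$. I do not expect a genuine obstacle here: the only points requiring care are that the value $\eta(x)$ is computed by the integer affine piece on the simplex containing $x$ (immediate from the definition) and that integrality of $d\,\eta(x)$ translates into divisibility of least common denominators. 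I would finally remark that when $x$ is a vertex of a regular simplex this is precisely the implication (ii)$\Rightarrow$(i) of Lemma~\ref{Lem-LinearMap}, so the corollary is just the extension of that implication from vertices to arbitrary rational points of $P$.
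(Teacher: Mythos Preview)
Your proposal is correct, and it is essentially the argument the paper has in mind: the paper gives no explicit proof, simply recording the statement as an immediate corollary of Lemma~\ref{Lem-LinearMap}. Your direct denominator computation (and its homogeneous reformulation) is precisely the natural way to spell out the implication (ii)$\Rightarrow$(i) of that lemma for an arbitrary rational point rather than just a vertex, as you yourself observe at the end.
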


\begin{corollary}\label{Cor_ExtensionToZed}
   Let $P\subseteq\R^{n}$ be a  polyhedron, $\Delta$  a regular
triangulation of $P$ and
   $f\colon
   \ver(\Delta)\rightarrow \mathbb{Q}^{m}$ a map
    such that $\den(f(v))$ divides $\den(v)$ for each $v\in\ver(\Delta)$.
   Then there exists a unique $\Zed$-map $\eta\colon P\rightarrow
\R^{m}$ satisfying the following two conditions:
   \begin{enumerate}
     \item $\eta$ is linear on each simplex of $\Delta$;
     \item $\eta\restrict{\ver(\Delta)}=f$.
   \end{enumerate}
\end{corollary}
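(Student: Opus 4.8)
The plan is to define $\eta$ by linear interpolation over $\Delta$ and then verify that the resulting piecewise-affine map has integer coefficients on each simplex, so that $\Delta$ itself witnesses that $\eta$ is a $\Zed$-map. First I would settle uniqueness, which is forced by conditions (1) and (2): every $x\in P$ lies in some simplex $S\in\Delta$ with vertices $v_0,\ldots,v_k$, and has unique barycentric coordinates $x=\sum_i\lambda_i v_i$; since $\eta$ is affine on $S$ with $\eta(v_i)=f(v_i)$, necessarily $\eta(x)=\sum_i\lambda_i f(v_i)$. Taking this as the definition, I would check well-definedness: if $x$ also lies in $S'\in\Delta$, then $x\in S\cap S'$, which is a common face of both simplexes because $\Delta$ is a simplicial complex, so the two barycentric expansions of $x$ agree on the shared vertices and vanish elsewhere, and the two candidate values coincide. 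The same observation shows $\eta$ is continuous.

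The heart of the matter is to produce, on each simplex, an affine map with integer coefficients, and here I would reduce to the scalar case by treating each output coordinate separately (in the spirit of Lemma \ref{Lem_CarZed}). Writing $f=(f_1,\ldots,f_m)$, note that $\den(f_j(v))$ divides $\den(f(v))$, since a single coordinate's denominator divides the least common denominator of the vector, while $\den(f(v))$ divides $\den(v)$ by hypothesis; hence $\den(f_j(v))$ divides $\den(v)$ for every $v\in\ver(\Delta)$ and every $j$.

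Fix a simplex $S\in\Delta$ with vertices $v_0,\ldots,v_k$; it is regular since $\Delta$ is. For a fixed coordinate $j$, consider the padded points $y_i=(f_j(v_i),0,\ldots,0)\in\R^n$, which satisfy $\den(y_i)=\den(f_j(v_i))$, a divisor of $\den(v_i)$. Lemma \ref{Lem-LinearMap}, applied to the vertices of $S$ and the points $y_i$, then yields an integer matrix $M$ and integer vector $b$ with $Mv_i+b=y_i$; reading off the first coordinate gives an integer affine functional $x\mapsto a_j\cdot x+c_j$ with $a_j\in\Zed^n$ and $c_j\in\Zed$ that sends each $v_i$ to $f_j(v_i)$. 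Stacking these $m$ functionals produces an integer affine map $A_S\colon\R^n\to\R^m$ agreeing with $f$ on $\ver(S)$.

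Finally I would tie the pieces together. Since $A_S$ and $\eta$ are both affine on $S$ and agree at the affinely independent vertices $v_0,\ldots,v_k$, they coincide on $S$, so $\eta\restrict{S}=A_S\restrict{S}$ has integer coefficients. As this holds for every $S\in\Delta$, the triangulation $\Delta$ exhibits $\eta$ as a $\Zed$-map in the sense of the definition, which completes existence. The only real obstacle is the passage from the denominator-divisibility hypothesis to integer coefficients, and this is precisely what Lemma \ref{Lem-LinearMap} supplies once the target is reduced to a single coordinate by padding; everything else is the routine bookkeeping of piecewise-linear maps on a simplicial complex.
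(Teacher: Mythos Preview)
Your proof is correct and follows exactly the route the paper intends: the statement is given as an immediate corollary of Lemma~\ref{Lem-LinearMap} with no further argument, and you have supplied precisely the missing derivation---barycentric interpolation for uniqueness and well-definedness, then Lemma~\ref{Lem-LinearMap} simplex-by-simplex for the integer coefficients. The coordinate-wise padding trick to force the target into $\R^n$ so that Lemma~\ref{Lem-LinearMap} applies verbatim is a clean way to handle the dimension mismatch between domain and codomain.
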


\bigskip

%%%%%%%%%%%%%%%%%% Definition of the arrow part of the functor P
\noindent Let $\mathfrak{W}= ( V,\Sigma,\omega)$ and $\mathfrak{W}' =
( V',\Sigma',\omega')$ be  weighted abstract simplicial
complexes, with
${ V}=\{v_{1},\ldots,v_{n}\}$ and  ${
V'}=\{v'_{1},\ldots,v'_{m}\}$.
Let  $\gamma\colon  V \rightarrow  V'$ be a morphism
of weighted abstract simplicial complexes.
   Corollary \ref{Cor_ExtensionToZed} yields
    a unique \Zed-map $\Pol(h)\colon
  \Pol(W)\rightarrow \Pol(W')$  with the following properties:
  \begin{itemize}
\item
$\Pol(h)$  is  linear on each simplex $S$ of $\Delta_{W}$,  and
\item
for each $i=1,\ldots,n,\,\,\,$
$\eta(e_{i}/\omega(v_{i}))=e_{j}/\omega'(v'_{j})$ whenever
$\gamma(v_i)=v'_{j}$.
\end{itemize}
%%%%%%%%%%%%%%%%%%%%%%%%%%%

%%%%%%%%%%%%%%%%% Definition of the funtor P
As a consequence,   $\Pol$ is a faithful functor from $\wasc$ into $\QP$.
%%%%%%%%%%%%%%%%%%%%%%%%%%%%%%
\smallskip

%%%%%%%%%%%%%%%%%%%%%% Definition of Skeleton
For every regular complex $\Delta$,  the {\it skeleton} of  $\Delta$
is the  weighted abstract simplicial complex
$\mathfrak{W}(\Delta)=(V,\Sigma,\omega)$ given by the following stipulations:
\begin{enumerate}
       \item  $V=\ver(\Delta)$.
       \item   For every  $v\in\ver(\Delta)$,
$\omega(v)=\den(v).$
\item For every subset $W=\{w_{1},\ldots,w_{k}\}$
of $V$, $W\in \Sigma$ iff $\conv(w_{1},\ldots,w_{k})\in\Delta.$
\end{enumerate}
%%%%%%%%%%%%%%%%%%%%%%%%%%%%%%

Let  $\{v_1,\ldots,v_m\}$ be the vertices of
a regular triangulation  $\Delta$  of a polyhedron $P$.
Let
\begin{equation}\label{Eq:IsoinCube}
\iota_{\Delta}\colon P\rightarrow \Pol(\mathfrak{W}(\Delta))
\end{equation}
 be
the unique \Zed-map given by Corollary \ref{Cor_ExtensionToZed} which
is linear on each simplex of $\Delta$ and also satisfies
$\iota_\Delta(v_i)=e_i/\den(v_i)$.  Then
$\iota_\Delta$ is a \Zed-homeomorphism.
Since $\Pol(\mathfrak{W}(\Delta))\subseteq [0,1]^m$, as a byproduct we obtain that each rational polyhedron is \Zed-homeomorphic to a polyhedron contained in some $m$-cube.
%%%%%%%%%%%%%%%%%%%%%%%%%%%%%%%%%%%

%%%%%%%%%%%%%%%% Properties of Pol
We have just proved
  that for each object $P$ of $\QP$ there exists an object
$\mathfrak{W}$ of $\wasc$ such that $\Pol(\mathfrak{W})$ is
isomorphic to $P$ in $\QP$.
Yet, $\Pol$ does not define an equivalence between the
categories $\wasc$ and $\QP$,
because $\Pol$  is not full:

\begin{example}
   Let the \Zed-map
   $\eta\colon[1/4,1/3]\rightarrow[0,2/3]$ be defined by $\eta(x)=8x-2$.
   Let the rational point $a\in [1/4,1/3]$ be
   such that $[1/4,a]$ is a regular $1$-simplex.
  Writing $a=k/l$ for $k, l\in \{1,2,\ldots\}$ with  $\gcd(k,l)=1$, it
  follows that   $l=4k-1$,  whence $\eta(a)=8(k/(4k-1))-2=2/(4k-1)$.
   Thus $[0,2/(4k-1)]=\eta([1/4,a])$ is not regular.
As a consequence,  there is no
    regular $\Delta$ triangulation of $[1/4,1/3]$ such that $\eta(\Delta)$
    is a regular triangulation of the simplex  $[0,2/3]$.
   Since for each morphism $\gamma\colon \mathfrak{W} \rightarrow
\mathfrak{W}'$ of weighted abstract simplicial complexes,
   $\Pol(h)$ satisfies
$\Pol(h)(\Delta_{\mathfrak{W}})\subseteq\Delta_{\mathfrak{W}'}$,
   we conclude that $\Pol$ is not full.
\end{example}

%%%%%%%%%%%%%%%%%%%%%%%%%%%%%%%%%%%%%%%%%%%%%%%%%%

%%%%%%%%%%%%%%%%%%%%%%%%%%%%%%%%%%%%%%%%%%%%%%%%%%%%%%%%%%%%%%%%%%%%%%%%%%%%%%%%%
%%%%%%%%%%%%%%%%%%%%%%%%%%%%%%%%%%%%%%%%%%%%%%%%%%%%%%%%%%%%%%%%%%%%%%%%%%%%%%%%%
\section{Properties of the category of Rational Polyhedra}

In this section we will study some properties of the category $\QP$
of rational polyhedra,
  and the dual properties of the category of finitely presented unital
  $\ell$-groups.
%
%To help the reader,  we give complete proofs of all
%these properties.

%%%%%%%%%%%%%%%%%%%%%%%%%%%%%%%%%%
\subsection{$\Zed$-maps and limits}
%%%%%%%%%%%%%%%%%%%%%%%%%%%%%%%%%%
The category of unital $\ell$-groups is small complete and small co-complete.
This is a consequence of the categorical equivalence between unital
$\ell$-groups and the equational class of MV-algebras,
  \cite[Theorem 3.9]{Mu1986}.\footnote{The small completeness (small co-completeness) for equational classes of algebras follows from Birkhoff Theorem (see \cite[Theorem 11.9]{BS1981}) and the construction of limits (co-limits) by products and equalizers (co-products and co-equalizers) (see \cite[\S 5.2. Theorem 1]{McL1969}).}

It follows that finitely presented unital $\ell$-groups are closed
under finite co-limits,
whence, by Theorem \ref{Theo_Baker-Beynon},    $\QP$ is closed under
finite limits.

We next
construct  finite limits in  $\QP$.
This  will be the key tool
  to describe monic and epic arrows in the category $\QP$
   in Theorem \ref{Theo_monicepi}.

\begin{theorem}[\bf Limits]\label{Teo_QP_SmallComplete}
   The category $\QP$ is closed under finite limits.
\end{theorem}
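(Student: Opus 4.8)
The plan is to show that $\QP$ has all finite limits by establishing that it has a terminal object together with all binary products and all equalizers; by the standard fact that these suffice to build all finite limits, the theorem then follows. Throughout I would exploit the fact (established just before the statement, via the categorical equivalence with MV-algebras and Theorem~\ref{Theo_Baker-Beynon}) that $\FP$ is closed under finite colimits, so that the duality $\McN$ guarantees the dual constructions exist abstractly in $\QP$; the work is to produce them \emph{concretely} as rational polyhedra and $\Zed$-maps, so that $\QP$ is genuinely closed under finite limits rather than merely admitting them after enlarging the category.

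First I would treat the terminal object: the one-point polyhedron $\{0\}\subseteq\R^{1}$ (a single rational point) is terminal, since for any rational polyhedron $P$ the constant map $P\to\{0\}$ is a $\Zed$-map and is clearly the unique arrow. Next, for binary products I would take rational polyhedra $P\subseteq\R^{n}$ and $Q\subseteq\R^{m}$ and form the set-theoretic product $P\times Q\subseteq\R^{n+m}$; this is again a rational polyhedron (a product of rational simplexes is a rational polyhedron, triangulable by Proposition~\ref{proposition:poly} or directly), and I would verify that the two coordinate projections are $\Zed$-maps using Lemma~\ref{Lem_CarZed}, since each coordinate function of a projection is integer-affine. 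The universal property is then checked by pairing: given $\Zed$-maps $f\colon R\to P$ and $g\colon R\to Q$, the map $\langle f,g\rangle\colon R\to P\times Q$ has $\Zed$-map coordinates, hence is a $\Zed$-map again by Lemma~\ref{Lem_CarZed}, and it is the unique such factorization.

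The equalizer is where the real content lies. Given two $\Zed$-maps $\eta,\theta\colon P\to Q$, I would take the equalizer to be the set $E=\{x\in P\mid \eta(x)=\theta(x)\}$ with the inclusion $E\hookrightarrow P$. The crucial point is that $E$ is a rational polyhedron: writing $\xi_{1},\dots,\xi_{m}$ for the coordinate maps of $\R^{m}$, the difference $\xi_{i}\circ\eta-\xi_{i}\circ\theta$ is a $\Zed$-map $P\to\R$, and $E$ is the common zero set $\bigcap_{i}(\xi_{i}\circ\eta-\xi_{i}\circ\theta)^{-1}(0)$. Each such zero set is a rational polyhedron by Lemma~\ref{Lem_ZeroP}, and a finite intersection of rational polyhedra is a rational polyhedron; alternatively, I would fix a common refining regular triangulation $\nabla$ of $P$ on which both $\eta$ and $\theta$ are simplexwise linear (obtained by applying Lemma~\ref{Lem_StableTriangul} twice and passing to a common subdivision), and observe that $E$ is a subcomplex-like rational polyhedron cut out linearly on each simplex. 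The inclusion is trivially a $\Zed$-map, and the universal property is immediate: any $\Zed$-map $g\colon R\to P$ with $\eta\circ g=\theta\circ g$ lands inside $E$ and factors uniquely through the inclusion, the factorization being a $\Zed$-map since corestricting a $\Zed$-map to a rational subpolyhedron preserves the property.

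\textbf{Main obstacle.} The step I expect to require the most care is verifying that the equalizer set $E$ is a genuine rational polyhedron and that the corestriction of a $\Zed$-map to $E$ is again a $\Zed$-map. The first half is handled cleanly by Lemma~\ref{Lem_ZeroP} applied coordinatewise (via Lemma~\ref{Lem_CarZed}), so the delicate part is ensuring the \emph{triangulation data} are compatible: I must guarantee that a single regular triangulation of $P$ refines to one adapted simultaneously to $\eta$, to $\theta$, and to the subpolyhedron $E$, so that the inclusion and any factoring map are $\Zed$-maps with respect to one coherent triangulation. This is where Lemmas~\ref{Lem_StableTriangul} and~\ref{Lem_Triang-Subset} do the essential work, and I would make sure to invoke them explicitly rather than leave the triangulation matching implicit.
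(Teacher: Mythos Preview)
Your approach is correct and essentially identical to the paper's: terminal object, set-theoretic product with projections and pairing checked via Lemma~\ref{Lem_CarZed}, and equalizer $E=\{x\in P:\eta(x)=\theta(x)\}$ shown to be a rational polyhedron via Lemma~\ref{Lem_ZeroP}. Two minor remarks: the paper consolidates your coordinatewise intersection into a single application of Lemma~\ref{Lem_ZeroP} by setting $f=\sum_{i=1}^{m}|\xi_i\circ\eta-\xi_i\circ\theta|$ so that $E=f^{-1}(0)$ directly; and the ``main obstacle'' you flag is not one, since the definition of a $\Zed$-map depends only on a triangulation of the \emph{domain}, so any $\Zed$-map $g\colon R\to P$ with $g(R)\subseteq E$ is already a $\Zed$-map $R\to E$ and no triangulation compatibility among $\eta$, $\theta$, and $E$ is required.
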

\begin{proof} In view of \cite[\S V.2. Corollary 2]{McL1969}, we
only need to prove that $\QP$ has finite products and equalizers.
   \medskip

   \noindent {\it Finite products}: It is easy to see that the set $\{1\}$ is the terminal object $\QP$.
     Therefore, $\QP$ admits the empty product.

     Suppose that $P\subseteq \R^{n}$ and $Q\subseteq \R^{m}$ are polyhedra.
     The   product of $P$ and $Q$ in $\Set$,
     $$
       P\times Q=\left\{ (x,y)\in\R^n\times\R^m\mid x\in P \text{ and } y\in Q \right\}
     $$
     is   a rational polyhedron.
     Using Lemma \ref{Lem_CarZed}, it is easy to see that the
projections $\pi_{P}$ and $\pi_{Q}$ are $\Zed$-maps.

     Suppose that $R\subseteq \R^{l}$ is a polyhedron and $\eta\colon
R\rightarrow P$ and $\mu\colon R\rightarrow Q$ are \Zed-maps.

     An application of Lemma \ref{Lem_CarZed}  shows  that
     the unique map $(\eta,\mu)\colon R\rightarrow P\times Q$ such that
     $\pi_P\circ (\eta,\mu)=\eta$ and $\pi_Q\circ (\eta,\mu)=\mu$ is a \Zed-map.
Thus  the cone $\xymatrix{P&P\times
Q\ar[l]_{\pi_{P}}\ar[r]^{\pi_{Q}}& Q}$ is universal in $\QP$,
and   $P\times Q$ is the product of $P$ and $Q$ in the category $\QP$.
     \medskip

   \noindent {\it Equalizers}: Let $P\subseteq \R^{n}$ and $Q\subseteq
\R^{m}$ be polyhedra and $\eta,\mu :P\rightarrow Q$   two
     $\Zed$-maps. Let
     $$
       E=\left\{ x\in P\mid\mu(x)=\eta(x)\right\}
     $$
     be the equalizer of $\mu$ and $\eta$ in $\Set$.
     To see that $E$ is a polyhedron, for each $i=1,\ldots,m,$
      let $f_i\colon P\rightarrow \R$ be defined by
      $f_i=|\xi_i\circ \mu-\xi_i\circ\eta|$, where $\xi_i\colon
Q\rightarrow \R$ are
      the coordinate maps.
     Writing $f=f_1+\cdots+f_m$ it follows that $E=f^{-1}(0)$.
By Lemma \ref{Lem_ZeroP},   $E$ is a rational polyhedron.

     If $R\subseteq \R^{l}$ is a polyhedron and $\nu\colon
R\rightarrow P$ is a \Zed-map such that $\nu\circ \eta=\nu\circ\mu$
then $\nu(R)\subseteq E$.
     Since inclusions are  $\Zed$-maps, the proof is complete.
\end{proof}

%As we have seen, finite limits in $\QP$ are special instances
%of limits in $\Set$. For instance,
%  pullbacks of (always rational)
%   polyhedra are just particular instances of the fiber product in $\Set$.

\begin{theorem}\label{Theo_monicepi}
   Let $P\subseteq \R^{n}$ and $Q\subseteq \R^{m}$ be polyhedra and
$\eta : P\rightarrow Q$ a $\Zed$-map.
   Then
   \begin{itemize}
     \item[(i)] $\eta$ is monic in $\QP$ iff it is one-one.
     \item[(ii)] $\eta$ is epic in $\QP$ iff it is onto.
   \end{itemize}
\end{theorem}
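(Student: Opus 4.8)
The plan is to prove each equivalence by establishing its two implications separately, in every case disposing of the ``easy'' set-theoretic direction first and obtaining the converse by contraposition through an explicitly constructed pair of $\Zed$-maps that witnesses the failure of the relevant universal property.

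For (i), one direction is the general fact that in a concrete category a morphism with injective underlying function is monic: if $\eta\circ\nu_1=\eta\circ\nu_2$ as functions and $\eta$ is one-one, then $\nu_1=\nu_2$ pointwise. For the converse I would argue contrapositively. Assuming $\eta$ is not one-one, I pick distinct rational points $a,b\in P$ with $\eta(a)=\eta(b)$. The naive attempt to separate them with the terminal object $\{1\}$ fails, since by Corollary \ref{Cor_Div_Denominators} every $\Zed$-map out of $\{1\}$ lands on an integer point; this is the one genuinely delicate issue. Instead I take as source the single rational point $R=\{r\}\subseteq\R$ whose denominator $\den(r)$ is a common multiple of $\den(a)$ and $\den(b)$, which is a regular $0$-simplex. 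By Corollary \ref{Cor_ExtensionToZed} the two constant maps $\nu_1\colon r\mapsto a$ and $\nu_2\colon r\mapsto b$ are then $\Zed$-maps, precisely because $\den(a)$ and $\den(b)$ both divide $\den(r)$. They satisfy $\eta\circ\nu_1=\eta\circ\nu_2$ yet $\nu_1\neq\nu_2$, so $\eta$ is not monic.

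For (ii), again one direction is the standard fact that a surjective underlying function is epic: if $\nu_1\circ\eta=\nu_2\circ\eta$ and $\eta$ is onto, then $\nu_1$ and $\nu_2$ agree on all of $Q$, since every point of $Q$ is a value of $\eta$. For the converse, suppose $\eta$ is not onto. By Lemma \ref{Lem_ImagesZmorph0} the image $\eta(P)$ is a (rational) polyhedron, and by hypothesis it is a proper subset of $Q$. Lemma \ref{Lem_ZeroP} then supplies a $\Zed$-map $f\colon Q\to\R$ with $f^{-1}(0)=\eta(P)$. Comparing $f$ with the constant zero map $0\colon Q\to\R$, both arrows of $\QP$, I get $f\circ\eta=0=0\circ\eta$ because $f$ vanishes on $\eta(P)$, while $f$ and $0$ differ at any point of $Q\setminus\eta(P)$. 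Hence $\eta$ is not epic, which completes the contrapositive.

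I expect the main obstacle to be the monic direction: one must resist using the terminal object and instead engineer the denominator of the source point so that the integrality constraint built into the definition of a $\Zed$-map permits the two separating constant maps. Once the correct source is chosen, both converses reduce cleanly to the previously established Lemmas \ref{Lem_ImagesZmorph0} and \ref{Lem_ZeroP} together with Corollary \ref{Cor_ExtensionToZed}.
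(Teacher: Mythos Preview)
Your argument for (i) is essentially the paper's: it too picks a single rational point of suitable denominator as the separating source (the paper uses $\{1/d\}$ with $d=\den(a)\cdot\den(b)$). One step you glide over: when $\eta$ fails to be one-one you immediately ``pick distinct \emph{rational} points $a,b$ with $\eta(a)=\eta(b)$,'' but non-injectivity a priori only gives real points. The paper handles this by observing that $E=\{(x,y)\in P\times P\mid \eta(x)=\eta(y)\}$ is a rational polyhedron (it is the equalizer constructed in Theorem~\ref{Teo_QP_SmallComplete}) strictly containing the diagonal, so a rational point off the diagonal exists by density. You should make this step explicit.

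For (ii) your route is genuinely different and cleaner. The paper builds two maps $\mu_1,\mu_2\colon Q\to Q\times[0,1]$ by taking a regular triangulation of $Q$ in which $\eta(P)$ sits as a full subcomplex (Lemma~\ref{Lem_Triang-Subset}) and then separating vertices in $\eta(P)$ from the others via the second coordinate. You instead invoke Lemma~\ref{Lem_ZeroP} directly to obtain a $\Zed$-map $f\colon Q\to\R$ with $f^{-1}(0)=\eta(P)$ and compare it to the zero map. This bypasses the triangulation machinery and Corollary~\ref{Cor_ExtensionToZed} entirely; the paper's construction is more hands-on but yours is shorter and just as rigorous.
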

\begin{proof}
   (i) For the nontrivial direction, suppose
   $\eta$ is a monic arrow in $\QP$.
   If $\eta$ is not one-one (absurdum hypothesis) let
   $$
     E=\left\{(x,y)\in P\times P\mid\eta(x)=\eta(y)\right\}.
   $$
The proof of Theorem \ref{Teo_QP_SmallComplete} shows that
   $E$ is the equalizer of $\eta\circ\pi_{1}$ and $\eta\circ\pi_{2}$, where
   $\pi_{1}$ and $\pi_{2}$ are the projections of $P\times P$ onto $P$.
   By our assumption, there exists a   rational point $(x,y)\in E$
such that $x,y\in P$ and $x\neq y$.
   Let $d=\den(x)\cdot \den(y)$ and $S=\left\{ \frac{1}{d}
\right\}\subseteq \R$.
   By Lemma \ref{Lem-LinearMap}, the maps $\mu_{1},\mu_{2}:S\rightarrow P$
   defined by $\mu_{1}(\frac{1}{d})=x$ and $\mu_{2}(\frac{1}{d})=y$
are \Zed-maps.
   Clearly, $\eta\circ\mu_{1}=\eta\circ\mu_{2}$ and $\mu_{1}\neq\mu_{2}$,
   a contradiction with the assumption that $\eta$ is monic in $\QP$.

   \medskip

   (ii) For the nontrivial direction, suppose that $\eta$ is epic in
$\QP$ but is not onto $Q$ (absurdum hypothesis).
   By Lemma \ref{Lem_ImagesZmorph},  $\eta(P)\subseteq Q$ is a polyhedron.
   By Lemma \ref{Lem_Triang-Subset},
   there exists  a regular triangulation
   $\Delta_{Q}$
   of $Q$ such that $\{S\in\Delta_{Q}\mid \ver(S)\subseteq\eta(P)\}$
is a regular triangulation of $\eta(P)$.

Let  $\mu_{1}\colon Q\rightarrow Q\times [0,1]$ be defined by
   $$
     \mu_{1}(v)=(v,0).
   $$
Let  $\mu_{2}$ be   the unique $\Zed$-map of Corollary
\ref{Cor_ExtensionToZed},
satisfying the following conditions
  for each $S\in\Delta_{Q}$ and every $v\in \ver(S)$:
   $$
     \mu_{2}(v)= \left\{\begin{tabular}{cl}
       $(v,0)$  & if $v\in \eta(P),$ \\
       $(v,1)$ & if  $v\notin \eta(P),$
     \end{tabular}\right.
   $$
  with $\mu_{2}$  being
   linear on each simplex of $\Delta_Q$.

   It is easy to see that $\mu_{1}\circ\eta=\mu_{2}\circ\eta$ but
$\mu_{1}\neq\mu_{2}$,
   a contradiction with the assumption
    that $\eta$ is epic. \end{proof}

The foregoing  result
is well known to the specialist:  in particular,
 (ii) can also be derived from
Theorem \ref{Theo_Baker-Beynon}
 in combination with \cite[Lemma 3.8]{Mu2011}.
   We have given a proof for the
sake of completeness.

\subsection{Cantor-Bernstein-Schr\"oder theorem}\label{SecZedHomeo}

In the previous subsection we
have characterized monic and epic arrows  in the category $\QP$.
In \cite[Proposition 3.15]{Mu2011} a characterization of iso-arrows
  in $\QP$ is given in terms of preservation of denominators of rational points.
Using this result,  in Corollary \ref{Cor_CBS}
we  will prove a (dual) Cantor-Bernstein-Schr\"oder theorem for finitely
presented unital $\ell$-groups.

By Theorem \ref{Theo_Baker-Beynon} we immediately have

\begin{lemma}\label{Lem_ZedHomeo}
   Let $P\subseteq \R^{n}$ and $Q\subseteq \R^{m}$ be polyhedra and
$\eta \colon P\rightarrow Q$ a  \Zed-map.
   Then $\eta$ is a \Zed-homeomorphism in $\QP$ iff
$\McN(\eta)\colon\McN(Q)\rightarrow \McN(P)$ is an isomorphism in
$\FP$.
\end{lemma}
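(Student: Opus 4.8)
The plan is to derive Lemma~\ref{Lem_ZedHomeo} directly from the Duality Theorem~\ref{Theo_Baker-Beynon}, exploiting the elementary categorical fact that a contravariant equivalence of categories sends iso-arrows to iso-arrows and reflects them as well. First I would recall that, by Theorem~\ref{Theo_Baker-Beynon}, the functor $\McN\colon \QP\rightarrow \FP$ is a duality, i.e. a categorical equivalence between $\QP$ and the opposite category $\FP^{\mathrm{op}}$. Any equivalence of categories is both full and faithful, and a full and faithful functor reflects isomorphisms: if $\McN(\eta)$ is an iso-arrow in $\FP$, then $\eta$ is an iso-arrow in $\QP$. Conversely, every functor preserves isomorphisms, so if $\eta$ is an iso-arrow in $\QP$ then $\McN(\eta)$ is an iso-arrow in $\FP^{\mathrm{op}}$, hence in $\FP$.

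The second ingredient I would use is the identification, already recorded in the excerpt immediately after Theorem~\ref{Theo_Baker-Beynon}, that \emph{$\Zed$-homeomorphisms are the same as iso-arrows of the category $\QP$}. Thus the statement ``$\eta$ is a \Zed-homeomorphism'' is literally ``$\eta$ is an iso-arrow in $\QP$.'' Combining this identification with the preservation/reflection of isomorphisms under the equivalence $\McN$ yields the desired biconditional: $\eta$ is a \Zed-homeomorphism in $\QP$ if and only if $\McN(\eta)$ is an isomorphism in $\FP$.

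Concretely, I would argue both directions explicitly. For the forward direction, suppose $\eta$ is a \Zed-homeomorphism, so it has a two-sided inverse $\eta^{-1}$ in $\QP$ with $\eta^{-1}\circ\eta=\mathrm{id}_P$ and $\eta\circ\eta^{-1}=\mathrm{id}_Q$; applying the functor $\McN$ and using functoriality ($\McN$ reverses composition and preserves identities) gives $\McN(\eta)\circ\McN(\eta^{-1})=\mathrm{id}_{\McN(P)}$ and $\McN(\eta^{-1})\circ\McN(\eta)=\mathrm{id}_{\McN(Q)}$, so $\McN(\eta^{-1})$ is a two-sided inverse of $\McN(\eta)$ in $\FP$, i.e. $\McN(\eta)$ is an isomorphism. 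For the converse, suppose $\McN(\eta)$ is an isomorphism in $\FP$; since $\McN$ is a duality, it is full and faithful, so the inverse isomorphism $\McN(\eta)^{-1}\colon \McN(P)\rightarrow\McN(Q)$ is of the form $\McN(\sigma)$ for a unique \Zed-map $\sigma\colon Q\rightarrow P$, and faithfulness forces $\sigma\circ\eta=\mathrm{id}_P$ and $\eta\circ\sigma=\mathrm{id}_Q$, so $\eta$ is invertible in $\QP$, hence a \Zed-homeomorphism.

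There is no real obstacle here: the content of the lemma is entirely absorbed by Theorem~\ref{Theo_Baker-Beynon}, and the only thing to be careful about is the bookkeeping of variance, namely that $\McN$ is \emph{contravariant}, so that an inverse pair in $\QP$ maps to an inverse pair in $\FP$ with the composition order reversed. The step I would watch most closely is the converse direction, where one must invoke fullness of the equivalence to realize the abstract inverse $\McN(\eta)^{-1}$ as $\McN(\sigma)$ for an actual \Zed-map $\sigma$; this is precisely the point where ``$\McN(\eta)$ is an iso in $\FP$'' is upgraded to ``$\eta$ is an iso in $\QP$,'' and it relies essentially on $\McN$ being an equivalence rather than merely a faithful functor.
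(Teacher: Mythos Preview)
Your proposal is correct and matches the paper's approach exactly: the paper does not give a separate proof but simply states that the lemma follows immediately from Theorem~\ref{Theo_Baker-Beynon}, and your argument is precisely the unpacking of that remark via preservation and reflection of isomorphisms under a categorical equivalence.
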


\begin{theorem}\cite[Proposition 3.5]{Mu2011}\label{Theo_TriangZedHomeo}
   Let $P\subseteq \R^{n}$ and $Q\subseteq \R^{m}$ be polyhedra and
   $\eta \colon  P\rightarrow Q$  a one-one \Zed-map of $P$ onto $Q$.
   Then the following conditions are equivalent:
\begin{itemize}
   \item[(i)] $\eta$ is a \Zed-homeomorphism.
   \item[(ii)] $\den(\eta(x))=\den(x)$ for each rational point $x\in P$.
   \item[(iii)] For each regular simplex $S\subseteq P$, $\eta(S)$ is
a regular simplex of $Q$ and $\den(\eta(x))=\den(x)$ for each  $x\in
\ver(S)$.
   \item[(iv)] For some (equivalently,  for every) regular
triangulation $\Delta$ of $P$ such that $\eta$ is linear on each
simplex of $\Delta$, $\eta(\Delta)$ is a regular triangulation of $Q$
and $\den(\eta(x))=\den(x)$ for each  $x\in \ver(\Delta)$.
\end{itemize}
\end{theorem}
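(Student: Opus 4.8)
The plan is to prove the cycle (i) $\Rightarrow$ (ii) $\Rightarrow$ (iv) $\Rightarrow$ (i), which already yields the equivalence of (i), (ii), (iv), and then to insert (iii) via the trivial implication (iii) $\Rightarrow$ (ii) together with (ii) $\Rightarrow$ (iii). Two of the four arrows are routine. For (i) $\Rightarrow$ (ii): since $\eta$ is a \Zed-homeomorphism, both $\eta$ and $\eta^{-1}$ are \Zed-maps, so Corollary \ref{Cor_Div_Denominators} applied to $\eta$ gives $\den(\eta(x))\mid\den(x)$, while the same corollary applied to $\eta^{-1}$ at $\eta(x)$ gives $\den(x)=\den(\eta^{-1}(\eta(x)))\mid\den(\eta(x))$; hence equality. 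For (iv) $\Rightarrow$ (i): let $\Delta$ be a regular triangulation of $P$ with $\eta$ linear on each simplex and $\eta(\Delta)$ a regular triangulation of $Q$ with denominators preserved on vertices. On each $\eta(S)\in\eta(\Delta)$, writing $S=\conv(v_0,\dots,v_m)$, the vertices $\eta(v_i)$ satisfy $\den(v_i)\mid\den(\eta(v_i))$ (indeed the two are equal), so Lemma \ref{Lem-LinearMap} furnishes an integral affine map sending $\eta(v_i)\mapsto v_i$, necessarily agreeing with $(\eta\restrict{S})^{-1}$ on all of $\eta(S)$. These local inverses agree on shared faces, hence glue to a \Zed-map $\psi\colon Q\to P$ with $\psi=\eta^{-1}$, so $\eta$ is a \Zed-homeomorphism.

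The crux is (ii) $\Rightarrow$ (iv). Fix, by Lemma \ref{Lem_StableTriangul}, a regular triangulation $\Delta$ of $P$ with $\eta$ linear on each simplex; the whole content is to show that for $S=\conv(v_0,\dots,v_m)\in\Delta$ the image $\eta(S)=\conv(\eta(v_0),\dots,\eta(v_m))$ is again regular. In homogeneous coordinates the affine map $\eta\restrict{S}$ lifts to an integral linear map $L$ with $L(\widetilde{v_i})=\widetilde{\eta(v_i)}$, where equality uses $\den(\eta(v_i))=\den(v_i)$. Since $\{\widetilde{v_0},\dots,\widetilde{v_m}\}$ is part of a basis of $\Zed^{n+1}$, every coprime nonnegative combination $\sum b_i\widetilde{v_i}$ is primitive, hence equals $\widetilde{x}$ for a rational point $x\in S$. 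Suppose, for contradiction, that $u_i:=\widetilde{\eta(v_i)}$ are not part of a basis of $\Zed^{m'+1}$ (with $Q\subseteq\R^{m'}$). As $\eta$ is one-one the $\eta(v_i)$ are affinely independent, so the $u_i$ are linearly independent; thus the $\gcd$ of the maximal minors of the matrix with columns $u_0,\dots,u_m$ exceeds $1$, and some prime $p$ divides all of them, making the $u_i$ dependent modulo $p$. Choose $a_0,\dots,a_m\in\{0,\dots,p-1\}$, not all zero, with $\sum a_i u_i\equiv 0\pmod p$; write $(a_i)=g\cdot(b_i)$ with $g=\gcd(a_i)$ and $(b_i)$ coprime, and let $x\in S$ satisfy $\widetilde{x}=\sum b_i\widetilde{v_i}$. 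Then condition (ii) at $x$ gives $L(\widetilde{x})=\widetilde{\eta(x)}$, a primitive vector, so $\sum a_i u_i=L(\sum a_i\widetilde{v_i})=g\cdot\widetilde{\eta(x)}$ has content (the $\gcd$ of its coordinates) equal to $g$. From $p\mid\sum a_i u_i$ we get $p\mid g$, whence, since $0\le a_i<p$ and $g\mid a_i$, all $a_i=0$, a contradiction. Hence each $\eta(S)$ is regular; and since $\eta$ is a bijection, $\eta(S)\cap\eta(S')=\eta(S\cap S')$, so $\eta(\Delta)$ is a regular triangulation of $Q$ with denominators preserved on its vertices, which is (iv).

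It remains to incorporate (iii). The implication (iii) $\Rightarrow$ (ii) is immediate: for any rational $x\in P$ the singleton $\{x\}=\conv(x)$ is a regular $0$-simplex (its correspondent $\widetilde{x}$ is primitive), so (iii) forces $\den(\eta(x))=\den(x)$. The reverse implication (ii) $\Rightarrow$ (iii) is where the main obstacle lies: the regularity of $\eta(S)$ follows from the argument above \emph{once we know that $\eta$ is affine on $S$}, but an arbitrary regular simplex $S\subseteq P$ need not be a cell of a triangulation on which $\eta$ is linear, so a priori $\eta$ could bend inside $S$. To rule this out I would take, by Lemma \ref{Lem_StableTriangul}, a regular subdivision $\nabla_S$ of $S$ with $\eta$ linear on each cell, and exploit that $\nabla_S$ is assembled from Farey blow-ups: whenever a vertex $v$ is introduced as the Farey mediant of a face $F=\conv(w_0,\dots,w_k)$, applying the local lift to $\widetilde{v}=\widetilde{w_0}+\cdots+\widetilde{w_k}$ and using $\den(\eta(v))=\den(v)$ forces $\widetilde{\eta(v)}=\widetilde{\eta(w_0)}+\cdots+\widetilde{\eta(w_k)}$; that is, $\eta$ carries Farey mediants to Farey mediants. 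Propagating this identity along the blow-ups producing $\nabla_S$ shows that $\eta$ coincides with a single integral affine map on $S$, so $\eta(S)$ is the single regular simplex $\conv(\eta(v_0),\dots,\eta(v_m))$. The delicate point—the one I expect to demand the most care—is exactly this bookkeeping: organizing $\nabla_S$ so that the Farey-mediant identities chain together (through the Farey calculus behind Lemma \ref{lemma:weakoda}) to upgrade ``piecewise affine on $S$'' to ``affine on $S$''.
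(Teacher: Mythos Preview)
First, note that the paper itself does not prove Theorem~\ref{Theo_TriangZedHomeo}: it is quoted without proof from \cite[Proposition~3.5]{Mu2011}. Your cycle (i)$\Rightarrow$(ii)$\Rightarrow$(iv)$\Rightarrow$(i) and the implication (iii)$\Rightarrow$(ii) are correct and well organised; in particular the mod-$p$ primitivity argument for (ii)$\Rightarrow$(iv) is a clean direct proof of the regularity of each $\eta(S)$.

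The gap is in (ii)$\Rightarrow$(iii): this implication, as (iii) is stated here, is \emph{false}, so your Farey-mediant sketch cannot be completed. Counterexample: let $P=\conv((0,0),(1,0),(0,1))\subseteq\R^{2}$, blow up at the Farey mediant $v=(\frac12,\frac12)$ of the edge $\conv((1,0),(0,1))$ to obtain $T_{1}=\conv((0,0),(1,0),v)$ and $T_{2}=\conv((0,0),(0,1),v)$, and set $\eta$ equal to the identity on $T_{1}$ and to the affine map with matrix $\bigl(\begin{smallmatrix}0&1\\-1&2\end{smallmatrix}\bigr)$ on $T_{2}$ (so $(0,0)\mapsto(0,0)$, $v\mapsto v$, $(0,1)\mapsto(1,2)$). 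Both pieces and their inverses are integral, and the images meet only along $\conv((0,0),v)$, so $\eta$ is a \Zed-homeomorphism of $P$ onto the nonconvex quadrilateral $Q=T_{1}\cup\conv((0,0),(1,2),v)$. Thus (i), (ii), (iv) hold, yet for the regular simplex $S=P$ the image $\eta(S)=Q$ is not a simplex, so (iii) fails. Your key step ``$\widetilde{\eta(v)}=\widetilde{\eta(w_{0})}+\cdots+\widetilde{\eta(w_{k})}$'' breaks exactly here: with $w_{0}=(1,0)$, $w_{1}=(0,1)$ there is no single lift $L$ defined at both $w_{0}$ and $w_{1}$, and indeed $\widetilde{\eta(v)}=(1,1,2)$ while $\widetilde{\eta(w_{0})}+\widetilde{\eta(w_{1})}=(1,0,1)+(1,2,1)=(2,2,2)$. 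The most plausible reading is that condition (iii) in \cite{Mu2011} carries the extra clause ``\dots such that $\eta$ is linear on $S$'', omitted in transcription; with that proviso (iii) follows at once from your argument for (ii)$\Rightarrow$(iv) applied to a single simplex.
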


\begin{definition}
   Let $P\subseteq \R^{n}$ and $Q\subseteq \R^{m}$ be polyhedra and
    $\eta \colon P\rightarrow Q$ a $\Zed$-map.
   Then $\eta$ is a {\it strict $\Zed$-map} if it is a
$\Zed$-homeomorphism onto its range.
\end{definition}

 From Theorem~\ref{Theo_TriangZedHomeo} we obtain:

\begin{corollary}\label{Cor-PreserDen}
   Let $P\subseteq \R^{n}$ and $Q\subseteq\R^{m}$ be
    polyhedra and $\eta\colon P\rightarrow Q$  a one-one  $\Zed$-map.
   Then the following conditions are equivalent:
   \begin{itemize}
     \item[(i)] $\eta$ is a strict \Zed-map.
     \item[(ii)] $\den(\eta(x))=\den(x)$ for each rational point $x\in P$.
     \item[(iii)] For each regular simplex $S\subseteq P$, $\eta(S)$
is a regular simplex of $Q$ and $\den(\eta(x))=\den(x)$ for each
$x\in \ver(S)$.
     \item[(iv)] For some (equivalently, for every) regular
triangulation $\Delta$ of $P$ such that $\eta$ is linear
     on each simplex of $\Delta$, $\eta(\Delta)$ is a regular
triangulation of $\eta(P)$ and $\den(\eta(x))=\den(x)$ for each
$x\in \ver(\Delta)$.
   \end{itemize}
\end{corollary}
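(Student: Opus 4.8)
The plan is to reduce Corollary~\ref{Cor-PreserDen} to Theorem~\ref{Theo_TriangZedHomeo} by the obvious device: a one-one $\Zed$-map $\eta\colon P\rightarrow Q$ is, by definition, a strict $\Zed$-map exactly when the corestriction $\eta'\colon P\rightarrow\eta(P)$ is a $\Zed$-homeomorphism. The point is that every condition appearing in the corollary refers only to $P$, to the rational points of $P$, and to the image set $\eta(P)$, never to the ambient codomain $Q$; so once I know that $\eta(P)$ is a genuine rational polyhedron I may apply Theorem~\ref{Theo_TriangZedHomeo} verbatim with $Q$ replaced by $\eta(P)$.

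First I would invoke Lemma~\ref{Lem_ImagesZmorph0} to conclude that $\eta(P)$ is a polyhedron in $\R^m$, so that $\eta'\colon P\rightarrow\eta(P)$ is a well-defined one-one $\Zed$-map \emph{onto} $\eta(P)$ (the inclusion $\eta(P)\hookrightarrow Q$ being trivially a $\Zed$-map, $\eta'$ is a $\Zed$-map whenever $\eta$ is). Thus $\eta'$ satisfies precisely the hypotheses of Theorem~\ref{Theo_TriangZedHomeo}. Next I would check that the four conditions of the corollary coincide, statement by statement, with the four conditions of Theorem~\ref{Theo_TriangZedHomeo} applied to $\eta'$. Condition~(i) of the corollary is the definition of $\eta'$ being a $\Zed$-homeomorphism, i.e.\ condition~(i) of the theorem. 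Conditions~(ii) of both coincide outright, since $\den(\eta(x))=\den(\eta'(x))$ for every rational $x\in P$. Conditions~(iii) match because a regular simplex $\eta(S)\subseteq Q$ lies in $\eta(P)$ precisely when its vertices do, and regularity of a simplex is an intrinsic property independent of whether we regard it inside $Q$ or inside $\eta(P)$. Finally condition~(iv) of the corollary is condition~(iv) of the theorem with $Q$ read as $\eta(P)$: the phrase ``$\eta(\Delta)$ is a regular triangulation of $\eta(P)$'' is exactly what the theorem asserts about the onto map $\eta'$.

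The only genuine content beyond bookkeeping is the verification that $\eta'$ really is a $\Zed$-map in the technical sense, i.e.\ that a triangulation of $P$ witnessing the $\Zed$-map structure of $\eta$ still witnesses it for the corestriction. This is immediate: the same affine integer pieces $\eta_T$ work, and their images land in $\eta(P)$ by construction. I therefore expect the main (and in fact only) obstacle to be ensuring that $\eta(P)$ is a polyhedron so that the word ``onto'' in the hypothesis of Theorem~\ref{Theo_TriangZedHomeo} is legitimately available; this is supplied by Lemma~\ref{Lem_ImagesZmorph0}. With that in hand the equivalence (i)$\Leftrightarrow$(ii)$\Leftrightarrow$(iii)$\Leftrightarrow$(iv) of the corollary is a direct transcription of the corresponding equivalence in the theorem, completing the proof.
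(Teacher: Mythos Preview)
Your proposal is correct and matches the paper's approach: the paper simply records that the corollary follows from Theorem~\ref{Theo_TriangZedHomeo}, and your argument spells out precisely why---namely, that by Lemma~\ref{Lem_ImagesZmorph0} the corestriction $\eta'\colon P\to\eta(P)$ is a one-one $\Zed$-map onto the polyhedron $\eta(P)$, so each condition of the corollary is the corresponding condition of the theorem applied to $\eta'$.
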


By Theorem \ref{Theo_monicepi}, every strict $\Zed$-map is a monic
$\Zed$-map, but the converse does not hold in general.
From Theorem \ref{Theo_Baker-Beynon}, monic \Zed-maps correspond to epi unital $\ell$-homomorphisms.
The following theorem
shows that strict \Zed-maps correspond to onto (or equivalently regular epi)
 unital
$\ell$-homomorphisms:

\begin{theorem}\label{Theo_StricOnto}
   Let $P\subseteq \R^{n}$ and $Q\subseteq \R^{m}$ be polyhedra and
$\eta\colon P\rightarrow Q$ be a \Zed-map.
   Then $\eta$ is a strict $\Zed$-map iff
    $\McN(\eta)\colon\McN(Q)\rightarrow \McN(P)$ is an onto map.
\end{theorem}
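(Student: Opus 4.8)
The plan is to factor $\eta$ through its set-theoretic image and then transport everything across the duality $\McN$, reducing the statement to Lemma~\ref{Lem_ZedHomeo}. By Lemma~\ref{Lem_ImagesZmorph0} the image $\eta(P)$ is a polyhedron, so I can write $\eta=\iota\circ\eta'$, where $\eta'\colon P\rightarrow\eta(P)$ is the corestriction of $\eta$ (a $\Zed$-map of $P$ \emph{onto} $\eta(P)$) and $\iota\colon\eta(P)\hookrightarrow Q$ is the inclusion, which is a $\Zed$-map. Since $\McN$ is contravariant, applying it gives $\McN(\eta)=\McN(\eta')\circ\McN(\iota)$, where $\McN(\iota)\colon\McN(Q)\rightarrow\McN(\eta(P))$ is the restriction map $f\mapsto f\restrict{\eta(P)}$ and $\McN(\eta')\colon\McN(\eta(P))\rightarrow\McN(P)$ is $g\mapsto g\circ\eta'$.

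Next I would record two facts about these factors. First, the restriction map $\McN(\iota)$ is onto: this is exactly the surjectivity of the map $h$ used in the proof of Lemma~\ref{Lem_ImagesZmorph}. Hence $\operatorname{im}\McN(\eta)=\McN(\eta')(\operatorname{im}\McN(\iota))=\McN(\eta')(\McN(\eta(P)))=\operatorname{im}\McN(\eta')$, so that $\McN(\eta)$ is onto \emph{iff} $\McN(\eta')$ is onto. Second, $\McN(\eta')$ is always one-one: if $g_{1}\circ\eta'=g_{2}\circ\eta'$ then, since $\eta'$ carries $P$ onto $\eta(P)$, every point of $\eta(P)$ has the form $\eta'(x)$, whence $g_{1}=g_{2}$ on $\eta(P)$.

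With these in hand the theorem follows from a short chain of equivalences. Combining the two facts, $\McN(\eta)$ is onto iff $\McN(\eta')$ is a bijective unital $\ell$-homomorphism, hence (a bijective homomorphism in a variety being automatically invertible) an isomorphism in $\FP$. By Lemma~\ref{Lem_ZedHomeo} applied to $\eta'\colon P\rightarrow\eta(P)$, this holds iff $\eta'$ is a $\Zed$-homeomorphism, i.e.\ iff $\eta$ is a $\Zed$-homeomorphism onto its range; by definition, this is precisely the assertion that $\eta$ is a strict $\Zed$-map. Thus both implications of the biconditional are obtained at once.

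I expect the only delicate point to be the surjectivity of the restriction map $\McN(\iota)$, that is, the fact that every $\Zed$-map $\eta(P)\rightarrow\R$ extends to a $\Zed$-map $Q\rightarrow\R$. This is already packaged in Lemma~\ref{Lem_ImagesZmorph}, and ultimately rests on the triangulation result of Lemma~\ref{Lem_Triang-Subset}: one triangulates $Q$ by a regular $\Delta_{Q}$ in which $\eta(P)$ is a full subcomplex, prescribes on the vertices lying in $\eta(P)$ the given values (and, say, the value $0$ on the remaining vertices, whose denominators then trivially divide), and invokes Corollary~\ref{Cor_ExtensionToZed}; fullness guarantees that the resulting $\Zed$-map restricts to the prescribed one on $\eta(P)$. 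Once this is granted, the remainder of the argument is a purely formal manipulation of the functor $\McN$.
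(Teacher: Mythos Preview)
Your argument is correct. The backward implication is handled exactly as in the paper: both of you invoke Lemma~\ref{Lem_ImagesZmorph} to obtain $\McN(P)\cong\McN(\eta(P))$ and then Lemma~\ref{Lem_ZedHomeo} to conclude that $\eta'\colon P\to\eta(P)$ is a $\Zed$-homeomorphism. The forward implication is organised differently: the paper constructs, for a given $f\in\McN(P)$, an explicit preimage $g\in\McN(Q)$ by choosing a regular $\Delta_P$ on which $f$ and $\eta$ are linear, pushing it forward to the regular triangulation $\eta(\Delta_P)$ via Corollary~\ref{Cor-PreserDen}, extending to a triangulation of $Q$ with Lemma~\ref{Lem_Triang-Subset}, and defining $g$ on vertices by $f\circ\eta^{-1}$ (and $0$ off $\eta(P)$). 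Your factorisation $\eta=\iota\circ\eta'$ absorbs this work into the single assertion ``$\McN(\iota)$ is onto'' and then runs both directions at once through the chain of equivalences ending in Lemma~\ref{Lem_ZedHomeo}. The net content is the same---your sketch of the surjectivity of $\McN(\iota)$ is precisely the paper's $(\Rightarrow)$ construction, just isolated as a stand-alone extension lemma---but your packaging is tidier. One small remark on that sketch: before invoking Lemma~\ref{Lem_Triang-Subset} you should first fix a regular triangulation of $\eta(P)$ on which the given map is linear (via Lemma~\ref{Lem_StableTriangul}), so that linear extension over $\Delta_Q$ genuinely restricts to the prescribed function on $\eta(P)$; fullness alone does not guarantee this.
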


\begin{proof}
$(\Rightarrow)$
   In order to prove that $\McN(\eta)$ is onto $\McN(P)$, let $f\in\McN(P)$.
   By Proposition \ref{proposition:poly} and Lemma
\ref{Lem_StableTriangul}, there exists  a regular triangulation
$\Delta_P$ of $P$ such that $\eta$ and $f$ are linear over each
simplex $S\in\Delta.$
   Since $\eta$ is a strict $\Zed$-map, by Theorem \ref{Theo_TriangZedHomeo},
    $\eta(\Delta_P)$ is a regular triangulation of $\eta(P)$.
   By  Lemma \ref{Lem_Triang-Subset} there exists    a regular triangulation
   $\Delta_Q$
   of $Q$ such that the set  $\nabla=\{S\in\Delta_Q\mid \ver(S)\subseteq
\eta(P)\}$ is a subdivision of $\eta(\Delta_P)$. Then
$\eta^{-1}\colon\eta(P)\rightarrow P$ is linear over each simplex of
   $\nabla$. By Theorem \ref{Theo_TriangZedHomeo},
    $\eta^{-1}(\nabla)$ is a regular triangulation of $P$ and  is a
subdivision of $\Delta_P$.

Let  $g\in\McN(Q)$
be
  uniquely determined by the following conditions:
   \begin{enumerate}
     \item  $g$ is linear over each simplex $S\in\Delta_Q,$
     \item  for every $v\in \ver(\Delta_Q)$,
       $$
         g(v)=\left\{\begin{tabular}{ll}
           $f(\eta^{-1}(v))$ & if $v\in\eta(P),$ \\
           $0$ & if  $v\not\in\eta(P).$
         \end{tabular}\right.
       $$
   \end{enumerate}
   The existence and uniqueness of   $g$
is ensured by  Lemma~\ref{Lem-LinearMap}.
It follows that
  $g\circ\eta$ is linear over each $S\in\eta^{-1}(\nabla)$.
   For every $x\in \eta^{-1}(\nabla)$ we can write
   $$
      g\circ\eta(x)=g(\eta(x))=f(\eta^{-1}(\eta(x)))=f(x).
   $$
   Therefore, $g\circ\eta=h(g)=f$,  and  $\McN(\eta)$ is onto.
   \medskip

$(\Leftarrow)$
Assume  $\McN(\eta)$ is an onto map.
   By Lemma \ref{Lem_ImagesZmorph}(ii),
   $$
   \McN(P)\cong\McN(Q)/\ker(\McN(\eta))\cong\McN(\eta(P)).
   $$
By Lemma~\ref{Lem_ZedHomeo}, $\eta\colon P\rightarrow\eta(P)$ is a
$\Zed$-homeomorphism.
\end{proof}

\begin{theorem}
   Let $P\subseteq \R^{n}$ be a polyhedron and $\eta\colon P\rightarrow P$
   a one-one (equivalently,  a monic) $\Zed$-map.
   Then $\eta$ is a $\Zed$-homeomorphism.
\end{theorem}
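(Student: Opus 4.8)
The plan is to exploit the compactness of $P$: since $P$ is a finite union of (closed, hence compact) simplexes, for each positive integer $d$ the set $P_d=\{x\in P : \den(x) \text{ divides } d\}$ of rational points whose denominator divides $d$ is \emph{finite}. By Corollary~\ref{Cor_Div_Denominators}, every $\Zed$-map can only decrease denominators, in the sense that $\den(\eta(x))$ divides $\den(x)$; hence $\eta$ maps $P_d$ into itself. Being one-one, $\eta\restrict{P_d}$ is then an injective self-map of the finite set $P_d$, and is therefore a bijection of $P_d$ onto itself, for \emph{every} $d$. This single observation drives the whole argument: it yields both that $\eta$ preserves denominators (so that, being one-one, it is a strict $\Zed$-map by Corollary~\ref{Cor-PreserDen}) and that $\eta$ is onto.

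First I would deduce denominator preservation. Fix a rational $x\in P$ and put $d=\den(x)$. Let $B=\bigcup\{P_e : e\mid d,\ e\neq d\}$ be the set of points of $P_d$ whose denominator is a \emph{proper} divisor of $d$. Applying the observation above to each such $e$ gives $\eta(P_e)=P_e$, whence $\eta(B)=B$; applying it to $d$ gives $\eta(P_d)=P_d$. Since $\eta$ is injective and $P_d=A\sqcup B$ with $A=\{y\in P : \den(y)=d\}$, it follows that $\eta(A)=\eta(P_d)\setminus\eta(B)=P_d\setminus B=A$. Thus $\eta(x)\in A$, i.e. $\den(\eta(x))=d=\den(x)$. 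As $x$ was arbitrary, $\eta$ preserves the denominator of every rational point, and Corollary~\ref{Cor-PreserDen} (equivalence of (i) and (ii)) shows that $\eta$ is a strict $\Zed$-map, that is, a $\Zed$-homeomorphism of $P$ onto $\eta(P)$; in particular $\eta^{-1}\colon\eta(P)\to P$ is a $\Zed$-map.

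It remains to prove $\eta(P)=P$, which is the step I expect to require the most care. Since $\eta$ maps each $P_d$ \emph{onto} $P_d$, the image $\eta(P)$ contains every rational point of $P$; and because $P$ is a rational polyhedron, its rational points are dense in $P$. On the other hand $\eta(P)$ is a polyhedron by Lemma~\ref{Lem_ImagesZmorph0}, hence closed, so it must contain the closure of this dense set, namely all of $P$. Together with $\eta(P)\subseteq P$ this gives $\eta(P)=P$. Consequently $\eta$ is a one-one $\Zed$-map of $P$ onto $P$ whose inverse is a $\Zed$-map, i.e. a $\Zed$-homeomorphism, which completes the proof. The only genuinely delicate point is the counting argument separating exact denominators from proper divisors; everything else is a routine combination of compactness, density, and the characterizations recorded in Corollaries~\ref{Cor_Div_Denominators} and~\ref{Cor-PreserDen}.
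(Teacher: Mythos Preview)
Your proof is correct and follows essentially the same strategy as the paper's: both exploit that rational points of bounded denominator in the compact set $P$ form finite sets that $\eta$ must permute, deduce that $\eta$ preserves denominators, and then use density of rationals plus closedness of $\eta(P)$ to get surjectivity. The only cosmetic difference is that the paper works with the sets $\{x\in P:\den(x)=k\}$ and argues by induction on $k$, whereas you work with $\{x\in P:\den(x)\mid d\}$ and replace the induction by a direct partition argument; the content is the same.
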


\begin{proof}
   By Theorem \ref{Theo_TriangZedHomeo}, it is enough to prove that
$\eta$ preserves denominators and   is onto $P$.
   For each $k=1,2,\ldots$, let $P_{k}=\{x\in P\cap \mathbb{Q}^{n}\mid
\den(x)=k\}$.
   Since $P$ is a bounded set, each $P_{k}$ is a finite set.

\smallskip
\noindent {\it Claim}: $\eta$ preserves denominators.
Equivalently,  $\eta(P_{k})=P_{k}$ for each $k=1,2,\ldots$.

   %%%%%%%%%%%%%%%%%%%% Proof by induction
The proof is by induction on $k$.
   %
   %%%%%%%%%%%%%%%%%%%% Initial case
   For the basis  case, let $x\in P_{1}$.
   Then $\den(\eta(x))$ divides $\den(x)=1$, i.e.  $\eta(P_{1})\subseteq P_{1}$.
   Since $\eta$ is one-one and $P_{1}$ is finite, $\eta(P_{1})=P_{1}.$

   %%%%%%%%%%%%%%%%%%%% Induction step
   For the induction step, suppose that for every $j<k$, $\eta(P_{j})=P_{j}$.
   Let $x\in P_{k}$. %
   Since $\den(\eta(x))$ divides $k$, then $\den(\eta(x))\leq k$.
  Assume  $\eta(x)\not\in P_{k}$ (absurdum hypothesis).
   Then $\eta(x)\in P_{k'}$ for some $k'<k$.
   By hypothesis, there exists $y\in P_{k'}$ such that $\eta(y)=\eta(x)$.
   Since $\den(x)=k'\neq k= \den(y)$,  then
    $x\neq y$, thus contradicting the fact that $\eta$.
    Our claim is settled.
   %%%%%%%%%%%%%%%%%%% End of the proof by induction

  \smallskip

   To see that $\eta$ is onto $P$, first let us
   observe that since $P$ is a rational polyhedron,
   $$
   P={\rm cl}(\bigcup_{k\geq 1} P_{k}),
   $$
   where ${\rm cl}$ denotes topological closure.
The continuity of $\eta$ now yields
   \begin{eqnarray}
   \nonumber  \eta(P) &=& \textstyle\eta({\rm cl}(\bigcup_{k\geq 1}
P_{k})) = {\rm cl}(\eta(\bigcup_{k\geq 1}P_{k}))\\
   \nonumber          &=& \textstyle{\rm cl}(\bigcup_{k\geq
1}\eta(P_{k}))  = {\rm cl}(\bigcup_{k\geq 1} P_{k})\\
   \nonumber          &=& P,
   \end{eqnarray}
   i.e.  $\eta$ is onto $P$. The proof is complete.
\end{proof}

\begin{theorem}
   Let $P\subseteq \R^{n}$ and $Q\subseteq \R^{m}$ be polyhedra,
   and $\eta
\colon P\rightarrow Q$ and $\mu\colon Q\rightarrow P$ be one-one
\Zed-maps.
   Then $P$ is $\Zed$-homeomorphic to $Q$.
\end{theorem}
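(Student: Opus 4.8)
The plan is to reduce the statement to the preceding theorem on one-one self-maps, exploiting that $\QP$ is a category, so that composites of $\Zed$-maps are again $\Zed$-maps and composites of injections are injective. First I would form the two self-maps $\mu\circ\eta\colon P\to P$ and $\eta\circ\mu\colon Q\to Q$. Each is a one-one $\Zed$-map of a polyhedron into itself, so by the preceding theorem each is a $\Zed$-homeomorphism; in particular each is onto and, by Theorem~\ref{Theo_TriangZedHomeo}(ii), each preserves denominators of rational points.

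The heart of the argument is a divisibility squeeze that upgrades $\eta$ from a mere $\Zed$-map to a denominator-preserving one. By Corollary~\ref{Cor_Div_Denominators}, for every rational $x\in P$ we have $\den(\eta(x))\mid\den(x)$ and, applying the same corollary to $\mu$, $\den(\mu(\eta(x)))\mid\den(\eta(x))$. On the other hand $\mu\circ\eta$ preserves denominators, so $\den(\mu(\eta(x)))=\den(x)$. Chaining these relations gives
$$
\den(x)=\den(\mu(\eta(x)))\mid\den(\eta(x))\mid\den(x),
$$
forcing $\den(\eta(x))=\den(x)$ for every rational $x\in P$. Thus $\eta$ preserves denominators. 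By symmetry the same computation shows $\mu$ preserves denominators, although this is not strictly needed.

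It remains to check that $\eta$ is onto $Q$. Since $\eta\circ\mu$ is a $\Zed$-homeomorphism of $Q$, it is surjective: every $z\in Q$ equals $\eta(\mu(y))$ for some $y\in Q$, whence $z\in\eta(P)$, and therefore $Q\subseteq\eta(P)$. Consequently $\eta\colon P\to Q$ is a one-one, onto, denominator-preserving $\Zed$-map, and the equivalence of conditions (ii) and (i) in Theorem~\ref{Theo_TriangZedHomeo} yields that $\eta$ is a $\Zed$-homeomorphism. Hence $P$ and $Q$ are $\Zed$-homeomorphic, as claimed.

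The step I expect to require the most care is the denominator squeeze. The subtlety is that injectivity alone does not guarantee that $\eta$ preserves denominators (this is precisely why \emph{strict} $\Zed$-maps form a proper subclass of the monic ones), and one cannot apply the self-map theorem to $\eta$ directly, since $P\neq Q$ in general. The device is to route denominator information through the composite $\mu\circ\eta$, whose denominator preservation is already guaranteed by the self-map theorem, and then to use the one-sided divisibility of Corollary~\ref{Cor_Div_Denominators} to clamp $\den(\eta(x))$ between two equal quantities.
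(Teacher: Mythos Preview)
Your proof is correct, but it takes a different route from the paper. The paper applies the self-map theorem only once, to $\nu=\mu\circ\eta\colon P\to P$, and then \emph{directly exhibits} the inverse of $\eta$ as the $\Zed$-map $\nu^{-1}\circ\mu\colon Q\to P$: one checks $(\nu^{-1}\circ\mu)\circ\eta=Id_P$ by associativity, and then $\mu\circ(\eta\circ\nu^{-1}\circ\mu)=\mu$ together with injectivity of $\mu$ gives $\eta\circ(\nu^{-1}\circ\mu)=Id_Q$. No denominator argument and no appeal to Theorem~\ref{Theo_TriangZedHomeo} are needed. Your approach instead applies the self-map theorem to both composites, extracts denominator preservation of $\eta$ via the divisibility squeeze through $\mu\circ\eta$, extracts surjectivity of $\eta$ from surjectivity of $\eta\circ\mu$, and then invokes Theorem~\ref{Theo_TriangZedHomeo} to certify that $\eta$ is a $\Zed$-homeomorphism. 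What the paper's argument buys is economy and a purely categorical flavor (it would work verbatim in any category where monic endomorphisms are iso); what your argument buys is an explicit verification that $\eta$ itself is strict, which is informative in its own right and makes transparent exactly how the denominator obstruction to strictness is overcome.
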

\begin{proof}
   Since $\nu=\mu\circ\eta$ is a one-one $\Zed$-map from $P$ into
itself, by the previous theorem $\nu$ is a \Zed-homeomorphism.

We claim that $\nu^{-1}\circ\mu\colon Q\rightarrow P$ is the inverse of $\eta$.
   Denoting by $Id_{P}$ and $Id_{Q}$
    the identity maps over $P$ and $Q$, we get
   $$
(\nu^{-1}\circ\mu)\circ\eta=\nu^{-1}\circ(\mu\circ\eta)=\nu^{-1}\circ(\nu)=Id_{P}.
   $$
  Symmetrically,
   $
     \mu\circ(\eta\circ\nu^{-1}\circ\mu)=\mu=\mu\circ Id_{Q}.
   $
   Since $\mu$ is a one-one $\Zed$-map, then
$\eta\circ(\nu^{-1}\circ\mu)=Id_{Q}.$
\end{proof}

   As a corollary we obtain a (dual) Cantor-Bernstein-Schr\"oder
theorem for finitely presented unital $\ell$-groups:
\begin{corollary}\label{Cor_CBS} For any finitely presented unital $\ell$-groups
   $(G_1,u_1)$ and $(G_2,u_2)$ the following conditions  are equivalent:
   \begin{enumerate}
     \item  $(G_1,u_1)$ and $(G_2,u_2)$ are isomorphic.
     \item  There are onto homomorphisms
$f\colon(G_1,u_1)\rightarrow(G_2,u_2)$ and
$g\colon(G_2,u_2)\rightarrow(G_1,u_1)$.
     \item  There are epic homomorphisms
$h\colon(G_1,u_1)\rightarrow(G_2,u_2)$ and
$l\colon(G_2,u_2)\rightarrow(G_1,u_1)$.
   \end{enumerate}
\end{corollary}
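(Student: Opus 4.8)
The plan is to establish the cycle of implications $(1)\Rightarrow(2)\Rightarrow(3)\Rightarrow(1)$, using the duality of Theorem~\ref{Theo_Baker-Beynon} to transport the whole question into the category $\QP$, where the preceding theorem on one-one $\Zed$-maps does the real work. The first two implications are immediate and require no machinery: an isomorphism is in particular an onto homomorphism, giving $(1)\Rightarrow(2)$, and every onto unital $\ell$-homomorphism is an epimorphism in $\FP$, giving $(2)\Rightarrow(3)$. Thus all the substance is concentrated in $(3)\Rightarrow(1)$.

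To prove $(3)\Rightarrow(1)$, I would first use that the equivalence $\McN$ is essentially surjective to fix rational polyhedra $P$ and $Q$ with $(G_1,u_1)\cong\McN(P)$ and $(G_2,u_2)\cong\McN(Q)$. Transporting the given epic homomorphisms $h$ and $l$ along these isomorphisms produces epic arrows $\McN(P)\to\McN(Q)$ and $\McN(Q)\to\McN(P)$ in $\FP$, since isomorphisms are epic and epics compose. Because $\McN$ is a contravariant equivalence, each of these arises as $\McN(\eta)$ and $\McN(\mu)$ for unique $\Zed$-maps $\eta\colon Q\to P$ and $\mu\colon P\to Q$, and---as noted after Theorem~\ref{Theo_monicepi}---the monic $\Zed$-maps are precisely the ones corresponding to epic homomorphisms. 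By Theorem~\ref{Theo_monicepi}(i) a monic $\Zed$-map is exactly a one-one $\Zed$-map, so both $\eta$ and $\mu$ are one-one.

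At this stage the hypotheses of the preceding theorem are met verbatim: $\eta\colon Q\to P$ and $\mu\colon P\to Q$ are one-one $\Zed$-maps between rational polyhedra. That theorem then supplies a $\Zed$-homeomorphism between $P$ and $Q$. Finally, Lemma~\ref{Lem_ZedHomeo} (equivalently, a second application of $\McN$) converts this $\Zed$-homeomorphism into an isomorphism $\McN(P)\cong\McN(Q)$ in $\FP$, whence $(G_1,u_1)\cong(G_2,u_2)$, which is $(1)$ and closes the cycle.

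The only point demanding care is bookkeeping: because $\McN$ is contravariant, one must keep the reversal of arrow directions straight, and---more importantly---observe that the \emph{weak} hypothesis of being epic (rather than onto) in $\FP$ already dualizes to mere injectivity of the corresponding $\Zed$-map, which is exactly what the preceding theorem consumes. Hence no strengthening of hypothesis~$(3)$ is needed, and there is no genuine obstacle internal to the corollary; its entire weight has been discharged in advance by the geometric Cantor-Bernstein-Schr\"oder theorem for one-one $\Zed$-maps established just above.
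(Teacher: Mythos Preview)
Your argument is correct and follows exactly the route the paper intends: the corollary is stated without an explicit proof, as an immediate consequence of the preceding Cantor--Bernstein--Schr\"oder theorem for one-one $\Zed$-maps together with the duality of Theorem~\ref{Theo_Baker-Beynon} and the identification of monic $\Zed$-maps with one-one maps in Theorem~\ref{Theo_monicepi}(i). Your careful handling of the contravariance and of the passage from epic homomorphisms to monic (hence one-one) $\Zed$-maps spells out precisely the details the paper leaves implicit.
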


It is easy to check
  that if in item (ii) we replace ``onto'' by ``one-one'' the result
is no longer valid.

%%%%%%%%%%%%%%%%%%%%%%%%%%%%%%%%%%%%%%%%%%%%%%%%%%%%%%%%%%%%%%%%%%%%%%%%%%
\subsection{Co-limits}
%%%%%%%%%%%%%%
Having constructed finite limits in the category of rational polyhedra,
we devote  this section to a special type of co-limits that will find
use in the rest of the paper.

{}From
  Lemmas
\ref{Lem_Triang-Subset},
\ref{Lem_Sub-Triang-Subset},
\ref{Lem_StableTriangul}
and Theorem \ref{Theo_TriangZedHomeo} we obtain

\begin{lemma}\label{Lem_TrianofVstrictmap}
   Let $P\subseteq \R^{n}$, $Q\subseteq \R^{m}$ and $D\subseteq
\R^{k}$ be polyhedra and
   $\eta\colon P\rightarrow Q$ and $\mu\colon P\rightarrow D$ be
strict $\Zed$-maps. Then there exist
   regular triangulations  $\Delta_P$, $\Delta_Q$, $\Delta_R$  of $P$, $Q$,
    $R$  satisfying the following conditions:
   \begin{itemize}
    \item[(i)] $\eta$ and $\mu$ are linear over each simplex of $\Delta_{P}$;
    \item[(ii)] $\eta(\Delta_P)$ is a full subcomplex of $\Delta_Q$;
    \item[(iii)] $\mu(\Delta_P)$ is a full subcomplex of $\Delta_R$.
   \end{itemize}
   Moreover, for any regular triangulations
   $\nabla_P$, $\nabla_Q$, $\nabla_R$
     of $P$, $Q$, $R$  we may insist that
      $\Delta_P$, $\Delta_Q$,  $\Delta_R$ subdivide
    $\nabla_P$, $\nabla_Q$,  $\nabla_R$, respectively.
\end{lemma}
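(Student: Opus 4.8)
The plan is to build $\Delta_P$ by a sequence of refinements, transporting each refinement across the strict $\Zed$-maps $\eta$ and $\mu$, and to absorb the resulting refinements into triangulations of $Q$ and of $R$ (the codomain of $\mu$, written $D$ in the hypothesis) using the two extension lemmas. The fact used throughout is that a strict $\Zed$-map restricts to a $\Zed$-homeomorphism onto its range, so by Corollary~\ref{Cor-PreserDen} and Theorem~\ref{Theo_TriangZedHomeo} it sends any regular triangulation of its domain over which it is linear to a regular triangulation of its image, and its inverse does the same in reverse.

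First I would secure condition~(i). Starting from $\nabla_P$, two applications of Lemma~\ref{Lem_StableTriangul} (one for $\eta$, one for $\mu$, using that a subdivision preserves linearity over simplexes) yield a regular triangulation $\Delta_P^{(1)}$ subdividing $\nabla_P$ over which both $\eta$ and $\mu$ are linear. By strictness, $\eta(\Delta_P^{(1)})$ and $\mu(\Delta_P^{(1)})$ are then regular triangulations of $\eta(P)$ and $\mu(P)$. Next I would achieve (ii) at the cost of a further subdivision of $P$: applying Lemma~\ref{Lem_Triang-Subset} to $\eta(P)\subseteq Q$ with triangulation $\eta(\Delta_P^{(1)})$, and taking the auxiliary triangulation of $Q$ to refine $\nabla_Q$, gives a regular triangulation $\Delta_Q^{(1)}$ of $Q$ subdividing $\nabla_Q$ in which $\rho=\{S\in\Delta_Q^{(1)}\mid \ver(S)\subseteq\eta(P)\}$ is a full subcomplex subdividing $\eta(\Delta_P^{(1)})$. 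Pulling $\rho$ back through the $\Zed$-homeomorphism $\eta^{-1}$ produces a regular triangulation $\Delta_P^{(2)}$ subdividing $\Delta_P^{(1)}$ with $\eta(\Delta_P^{(2)})=\rho$ full in $\Delta_Q^{(1)}$; since $\Delta_P^{(2)}$ refines $\Delta_P^{(1)}$, the map $\mu$ is still linear over it.

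I would then repeat the same manoeuvre on the $R$-side. Now $\mu(\Delta_P^{(2)})$ is a regular triangulation of $\mu(P)$; Lemma~\ref{Lem_Triang-Subset} applied to $\mu(P)\subseteq R$ with this triangulation (auxiliary triangulation of $R$ refining $\nabla_R$) gives a regular $\Delta_R$ subdividing $\nabla_R$ with a full subcomplex subdividing $\mu(\Delta_P^{(2)})$, and pulling back through $\mu^{-1}$ yields the final $\Delta_P$ subdividing $\Delta_P^{(2)}$ with $\mu(\Delta_P)$ full in $\Delta_R$. This delivers (i) and (iii), and $\Delta_P$ subdivides $\nabla_P$.

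The main obstacle is that this last refinement $\Delta_P^{(2)}\rightsquigarrow\Delta_P$ destroys (ii): $\eta(\Delta_P)$ is now a proper subdivision of the full subcomplex $\eta(\Delta_P^{(2)})\subseteq\Delta_Q^{(1)}$, hence no longer a subcomplex of $\Delta_Q^{(1)}$. This is precisely what Lemma~\ref{Lem_Sub-Triang-Subset} is designed to repair: applied with the full subcomplex $\eta(\Delta_P^{(2)})\subseteq\Delta_Q^{(1)}$ and its regular refinement $\eta(\Delta_P)$, it produces a regular triangulation $\Delta_Q$ of $Q$ subdividing $\Delta_Q^{(1)}$ (hence $\nabla_Q$) that contains $\eta(\Delta_P)$ as a subcomplex. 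Because this step alters only the triangulation of $Q$, it leaves (i) and (iii) untouched. The one point needing care is that (ii) asks for a \emph{full} subcomplex; this is visible from the proof of Lemma~\ref{Lem_Sub-Triang-Subset}, where the coning sets $U_S$ introduce no simplex with all its vertices in $\eta(P)$ beyond those already belonging to $\eta(\Delta_P)$, so $\eta(\Delta_P)$ is full in $\Delta_Q$. With $\Delta_P$, $\Delta_Q$, $\Delta_R$ so obtained, all three conditions together with the subdivision requirements hold.
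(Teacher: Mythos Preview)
Your argument is correct and follows precisely the route the paper indicates (the paper merely cites Lemmas~\ref{Lem_Triang-Subset}, \ref{Lem_Sub-Triang-Subset}, \ref{Lem_StableTriangul} and Theorem~\ref{Theo_TriangZedHomeo} without spelling out the details, so you have essentially written the intended proof).

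The one place that deserves a line more of justification is the fullness claim at the very end. You argue, correctly, that in the intermediate complex $\nabla_0$ of the proof of Lemma~\ref{Lem_Sub-Triang-Subset} the coning sets $U_S$ add no simplex with all vertices in $\eta(P)$ beyond those of $\eta(\Delta_P)$. But $\Delta_Q$ is obtained from $\nabla_0$ by the desingularization step, and you do not say why that step preserves fullness. It does: each blow-up is at the Farey mediant $v$ of a \emph{non-regular} simplex $S_0\in\nabla_0$; since $\eta(\Delta_P)$ is full in $\nabla_0$ and regular, $S_0$ has a vertex outside $\eta(P)$, so $S_0\cap\eta(P)$ is a proper face of $S_0$ and $v\in\mathrm{relint}(S_0)$ lies outside $\eta(P)$. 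Thus no new vertex is introduced in $\eta(P)$, and every new simplex $\conv(F\cup\{v\})$ has the vertex $v\notin\eta(P)$. An easy induction then shows that after all blow-ups the simplexes of $\Delta_Q$ with all vertices in $\eta(P)$ are exactly those of $\eta(\Delta_P)$; combined with $\eta(\Delta_P)\subseteq\Delta_Q$ and the simplicial-complex property, this gives fullness. With this sentence added, your proof is complete.
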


\begin{theorem}[\bf Co-limits]\label{Theo-Colimits}
     $\QP$ has finite co-products and pushouts of strict \Zed-maps.
\end{theorem}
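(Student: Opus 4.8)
The plan is to treat the two assertions separately: finite co-products by a direct disjoint-union embedding, and pushouts of strict maps by gluing compatible triangulations through the functor $\Pol$.

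For \textbf{co-products}, given $P\subseteq\R^{n}$ and $Q\subseteq\R^{m}$ I would realize their binary co-product inside $\R^{n+m+1}$ as
$$
   P\sqcup Q=\bigl(P\times\{0\}^{m}\times\{0\}\bigr)\cup\bigl(\{0\}^{n}\times Q\times\{1\}\bigr),
$$
a rational polyhedron whose two pieces are separated by the last coordinate. The injections $\iota_{P}(x)=(x,0,\dots,0)$ and $\iota_{Q}(y)=(0,\dots,0,y,1)$ are affine with integer coefficients, hence $\Zed$-maps, and each preserves $\den$ (appending integer coordinates does not change the least common denominator), so they are strict by Corollary~\ref{Cor-PreserDen}; in particular $\iota_{P}^{-1}$ and $\iota_{Q}^{-1}$ are $\Zed$-maps. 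For the universal property, given $\Zed$-maps $f\colon P\to S$ and $g\colon Q\to S$, I set $h=f\circ\iota_{P}^{-1}$ on the first component and $h=g\circ\iota_{Q}^{-1}$ on the second. Since the two components are disjoint closed subpolyhedra, the union of triangulations over which these restrictions are linear is a triangulation of $P\sqcup Q$ over which $h$ is linear with integer coefficients, so (using Lemma~\ref{Lem_CarZed}) $h$ is the unique mediating $\Zed$-map. The empty co-product is the initial object (the empty polyhedron), and arbitrary finite co-products follow by iterating the binary case.

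For \textbf{pushouts of strict $\Zed$-maps} $\eta\colon P\to Q$ and $\mu\colon P\to R$, I would first invoke Lemma~\ref{Lem_TrianofVstrictmap} to obtain regular triangulations $\Delta_{P},\Delta_{Q},\Delta_{R}$ such that $\eta,\mu$ are linear over each simplex of $\Delta_{P}$ and $\eta(\Delta_{P})$, $\mu(\Delta_{P})$ are \emph{full} subcomplexes of $\Delta_{Q}$, $\Delta_{R}$. By Theorem~\ref{Theo_TriangZedHomeo}, $\eta$ and $\mu$ induce weight-preserving combinatorial isomorphisms of the skeleton $\mathfrak{W}(\Delta_{P})$ onto the subcomplexes determined by $\eta(\Delta_{P})\subseteq\mathfrak{W}(\Delta_{Q})$ and $\mu(\Delta_{P})\subseteq\mathfrak{W}(\Delta_{R})$. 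I would then form the weighted abstract simplicial complex $\mathfrak{W}_{T}$ by gluing $\mathfrak{W}(\Delta_{Q})$ and $\mathfrak{W}(\Delta_{R})$ along $\mathfrak{W}(\Delta_{P})$, identifying $\eta(v)$ with $\mu(v)$ for each $v\in\ver(\Delta_{P})$; the common value $\den(\eta(v))=\den(v)=\den(\mu(v))$ makes the weight well defined, and fullness guarantees that the identification yields a genuine complex. Setting $T=\Pol(\mathfrak{W}_{T})$, the two inclusions are morphisms of $\wasc$, so composing the $\Zed$-homeomorphisms $\iota_{\Delta_{Q}},\iota_{\Delta_{R}}$ with $\Pol$ of these inclusions produces $\Zed$-maps $\alpha\colon Q\to T$ and $\beta\colon R\to T$ with $\alpha\circ\eta=\beta\circ\mu$ (checked on vertices, then extended by linearity).

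To finish, given $a\colon Q\to S$ and $b\colon R\to S$ with $a\circ\eta=b\circ\mu$, I would define $c\colon T\to S$ by $c=a\circ\alpha^{-1}$ on $\alpha(Q)$ and $c=b\circ\beta^{-1}$ on $\beta(R)$; these agree on $\alpha(Q)\cap\beta(R)=\alpha(\eta(P))$ precisely because $a\circ\eta=b\circ\mu$, and over the triangulation of $T$ induced by $\mathfrak{W}_{T}$ the map $c$ is linear with integer coefficients on each simplex, so it is the unique mediating $\Zed$-map. The step I expect to be the main obstacle is exactly this last verification that $T$ is a legitimate rational polyhedron and that $c$ is globally a $\Zed$-map: it is here that the full-subcomplex conclusions and the denominator preservation furnished by strictness (Lemma~\ref{Lem_TrianofVstrictmap} and Theorem~\ref{Theo_TriangZedHomeo}) are indispensable, since without fullness the gluing could fail to be a simplicial complex and without denominator preservation the weights on identified vertices would be inconsistent.
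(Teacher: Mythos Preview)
Your construction for co-products and your construction of the pushout object $T=\Pol(\mathfrak{W}_T)$ via gluing in $\wasc$ are essentially the paper's arguments. The gap is in your verification of the universal property for pushouts.

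You claim that, given arbitrary $\Zed$-maps $a\colon Q\to S$ and $b\colon R\to S$ with $a\circ\eta=b\circ\mu$, the resulting mediating map $c$ is ``linear with integer coefficients on each simplex of the triangulation of $T$ induced by $\mathfrak{W}_{T}$.'' This is false in general: the triangulations $\Delta_{Q},\Delta_{R}$ were fixed at the outset, \emph{before} $a$ and $b$ were given, and there is no reason $a$ should be linear on the simplexes of $\Delta_{Q}$ (or $b$ on those of $\Delta_{R}$). Consequently $a\circ\alpha^{-1}$ need not be linear on the simplexes of $\Delta_{\mathfrak{W}_{T}}$ lying in $\alpha(Q)$.

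What is missing is a second triangulation step, depending on $a$ and $b$. The paper handles this as follows: first use Lemma~\ref{Lem_StableTriangul} to obtain regular triangulations $\nabla_{Q}$ of $Q$ and $\nabla_{R}$ of $R$ on which $a,\alpha$ (resp.\ $b,\beta$) are simultaneously linear; then apply Lemma~\ref{Lem_TrianofVstrictmap} \emph{again}, using its ``moreover'' clause, to get $\Lambda_{P},\Lambda_{Q},\Lambda_{R}$ subdividing $\nabla_{Q},\nabla_{R}$ and still satisfying the full-subcomplex compatibility. Because $\alpha,\beta$ are strict, the images $\alpha(\Lambda_{Q})$ and $\beta(\Lambda_{R})$ are regular, they agree on $\alpha(\eta(\Lambda_{P}))=\beta(\mu(\Lambda_{P}))$, and their union $\Lambda$ is a regular triangulation of $T$. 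The mediating map is then defined on vertices of $\Lambda$ via Corollary~\ref{Cor_ExtensionToZed} and extended linearly; linearity of $a,b$ over $\Lambda_{Q},\Lambda_{R}$ is exactly what makes $c\circ\alpha=a$ and $c\circ\beta=b$ hold globally, not just at vertices. Without this refinement your argument shows only that $c$ restricted to each of $\alpha(Q)$ and $\beta(R)$ is a $\Zed$-map, and you still need a common triangulation of $T$ to conclude that the glued map is one.
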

\begin{proof}
{\it Finite co-products}:
  Considered as a rational polyhedron,
  the empty set  is the intial object of $\QP$, and is also  the empty
co-product.
Next, for any polyhedra
  $P\subseteq \R^{m}$ and $Q\subseteq \R^{n}$  let
  $k={\rm max}\{m,n\}$ and
   $$
     C=\left\{ (x,0_{m+1},\ldots,0_{k+1})\mid x\in P\right\}\cup
\left\{(y,1_{n+1},\ldots,1_{k+1})\mid y\in Q\right\}.
   $$
   Then
   $C$ is the co-product of $P$ and $Q$ in $\Set$ and the inclusion maps
   $$
    \begin{tabular}{ccc}
      $i_{P}\colon P\rightarrow C$ & and & $i_{Q}\colon Q\rightarrow C$ \\
      $i_{P}(x)=(x,0_{m+1},\ldots,0_{k+1})$ & &
      $i_{Q}(y)=(y,1_{n+1},\ldots,1_{k+1})$
    \end{tabular}
    $$
    are $\Zed$-maps.

   Let $R\subseteq \R^{l}$ be a polyhedron and $\eta\colon
P\rightarrow R$ and $\mu\colon Q\rightarrow R$ be \Zed-maps.
By
  Lemma \ref{Lem_CarZed},
  the unique map $\langle\eta,\mu\rangle\colon C\rightarrow R$
  defined by
   $\langle\eta,\mu\rangle\circ i_P=\eta$ and
$\langle\eta,\mu\rangle\circ i_Q=\mu$ is a \Zed-map.
   Then $C$ is the co-product of $P$ and $Q$ in the category $\QP$.
   \medskip

\noindent {\it Pushouts of strict \Zed-maps}:
   Let $P\subseteq \R^{n}$, $Q\subseteq \R^{m}$ and $D\subseteq
\R^{k}$ be polyhedra and
   $\eta\colon P\rightarrow Q$ and $\mu\colon P\rightarrow D$ be
strict $\Zed$-maps.

   With reference to    Lemma \ref{Lem_TrianofVstrictmap},
   let  $\Delta_{P}$, $\Delta_{Q}$ and $\Delta_{R}$ be regular
triangulations of $P$, $Q$ and $R$
   such that $\eta$ and $\mu$ are linear over each simplex of $\Delta_{P}$,
   $\eta(\Delta_P)$ is a full subcomplex of $\Delta_Q$,
   and $\mu(\Delta_P)$ is a full subcomplex of $\Delta_R$.

   Let $f=\eta\restrict{\ver(\Delta_P)}$ and $g=\mu\restrict{\ver(\Delta_P)}$.
   By definition,
   $f\colon \mathfrak{W}(\Delta_P)\rightarrow \mathfrak{W}(\Delta_Q)$
and $g\colon \mathfrak{W}(\Delta_P)\rightarrow
\mathfrak{W}(\Delta_R)$ are d-maps
and the following diagram commutes:
   $$
     \xymatrix{
     {}&{Q\ar[rr]^{\iota_{\Delta_Q}}}&{}&{\Pol(\mathfrak{W}_{\Delta_Q})}\\
{P\ar[ur]^{\eta}\ar[dr]^{\mu}\ar[rr]^{\iota_{\Delta_P}}}&{}&{\Pol(\mathfrak{W}_{\Delta_P})\ar[ur]^{\Pol(f)}\ar[dr]^{\Pol(g)}}&{}\\
     {}&{R\ar[rr]^{\iota_{\Delta_R}}}&{}&{\Pol(\mathfrak{W}_{\Delta_R})}\\
     }
   $$

   Since $\eta$ and $\mu$ are one-one,  then so are
   $f$ and $g$,  and we can write
   \begin{center}
   \begin{tabular}{lcl}
      $\ver(\Delta_{P})$ & $=$ & $\{v_{1},\ldots,v_{r}\}$ \\
      $\ver(\Delta_{Q})$ & $=$ &
$\{f(v_{1}),\ldots,f(v_m),w_{1},\ldots,w_{s}\}$ \\
      $\ver(\Delta_{R})$ & $=$ &
$\{g(v_{1}),\ldots,g(v_m),z_{1},\ldots,z_{t}\}$
   \end{tabular}
   \end{center}

   Let $W=\ver(\Delta_{Q})\setminus f(\ver(\Delta_{P}))$ and
$Z=\ver(\Delta_{R})\setminus g(\ver(\Delta_{P}))$.
   Without loss of generality,
     $W\cap Z=\emptyset$.

   We now define the weighted abstract simplicial complex
   $\mathfrak{W}=\langle V,\Sigma,\omega\rangle$ by the following stipulation:
   \begin{itemize}
       \item $V=\ver(\Delta_{P})\cup W\cup Z $;
       \item   $\omega(v)=\den(v)$ for each $v\in V$;
       \item $X\in\Sigma$ if
       \begin{itemize}
         \item[(a)] either  $X\subseteq \ver(\Delta_{P})\cup W$ and
$f(X\cap \ver(\Delta_{P}))\cup (X\cap W)\in\Sigma_{\Delta_{Q}}$, or
         \item[(b)]$X\subseteq \ver(\Delta_{P})\cup Z$ and $g(X\cap
\ver(\Delta_{P}))\cup (X\cap Z)\in\Sigma_{\Delta_{R}}$.
       \end{itemize}
   \end{itemize}

   Let $i_Q\colon \ver(\Delta_Q)\rightarrow V$ and
   $i_R\colon \ver(\Delta_R)\rightarrow V$ be defined as follows:
   $$
     i_Q(v)=\left\{\begin{tabular}{ll}
       $v$& if $v\in W$\\
       $w$& if $w\in \ver(\Delta_{P})$ and $v=f(w)$,
     \end{tabular}\right.
   $$
   $$
     i_R(v)=\left\{\begin{tabular}{ll}
       $v$& if $v\in Z$\\
       $w$& if $w\in \ver(\Delta_{P})$ and $v=g(w)$.
     \end{tabular}\right.
   $$
   By  definition of $\Sigma$ and $\omega$,  the
   maps
   $i_Q$ and $i_R$ preserve weights,
   whence  they are morphisms in $\wasc$.
   By Corollary \ref{Cor-PreserDen},
     $\Pol(i_Q)$ and $\Pol(i_R)$ are strict \Zed-maps and
     we have a commutative diagram

       $$
     \xymatrix{
{}&{Q\ar[rr]^{\iota_{\Delta_Q}}}&{}&{\Pol(W_{\Delta_Q})\ar[dr]^{\Pol(i_Q)}}&{}\\
{P\ar[ur]^{\eta}\ar[dr]^{\mu}\ar[rr]^{\iota_{\Delta_P}}}&{}&{\Pol(W_{\Delta_P})\ar[ur]^{\Pol(f)}\ar[dr]^{\Pol(g)}}&{}&{\Pol(W)}\\
{}&{R\ar[rr]^{\iota_{\Delta_R}}}&{}&{\Pol(W_{\Delta_R})\ar[ur]^{\Pol(i_R)}}&{}\\
     }
   $$

        \medskip

          \medskip

   Observe that $\rho_Q=\Pol(i_Q)\circ \iota_{\Delta_Q}$ and
$\rho_R=\Pol(i_R)\circ \iota_{\Delta_R}$ are strict \Zed-maps. Since
$i_Q(f(v))=i_R(g(v))=v$ for each $v\in \ver(\Delta_P)$, we have
   $\rho_Q\circ \eta=\rho_R\circ \mu=\rho_P$.

\medskip
We {\it claim} that  for all $x\in Q$ and $y\in R$ with  $\rho_Q(x)=\rho_R(y)$,
    there exists $z\in P$ such that $\eta(z)=x$ and $\mu(z)=y$.

    As a matter of fact,
     let $S\in \Sigma_{\Delta_{Q}}$
        and   $T\in \Sigma_{\Delta_{R}}$
satisfy  $x\in S$
and  $y\in T$.
Then $\rho_Q(x)=\rho_R(y)\in\rho_Q(S)\cap\rho_R(T)\in \Sigma$, whence
  $\rho_Q(S)\cap\rho_R(T)=\conv(\rho_Q(\ver(S))\cap \rho_R(\ver(T)))$.
  Our assumptions about
  $i_Q$, $i_R$ and  $W\cap Z=\emptyset$ are to the effect
  that
  $\rho_Q(\ver(S))\cap \rho_R(\ver(T))\subseteq\ver(\Delta_P)$.
   Then there exists $z\in P$ such that $\rho_P(z)=
   \rho_Q\circ \eta(z)=\rho_R\circ \mu(z)=\rho_Q(x)=\rho_R(y)$.
   Since $\rho_Q$ and $\rho_R$ are one-one, $\eta(z)=x$ and $\mu(z)=y$, as claimed.

\medskip
We have proved:
   \begin{equation}
     \rho_Q(Q)\cap\rho_R(R)=\rho_Q\circ f(P)=\rho_R\circ
g(P)=\rho_P(P).\label{Eq_Amalgam}
   \end{equation}

   In order to prove that $\Pol(\mathfrak{W})$ is the pushout $Q\coprod_P R$ in $\wasc$,
let  $U$ be a rational polyhedron, together with \Zed-maps
   $\gamma_Q\colon Q\rightarrow U$ and $\gamma_R\colon R\rightarrow U$
    such that $\gamma_Q\circ\eta=\gamma_R\circ\mu$.
Repeated applications
  of Lemma \ref{Lem_StableTriangul} provide regular triangulations $\nabla_Q$
  of $Q$ and
$\nabla_R$ of $R$   such that $\gamma_Q$ and $\rho_Q$ are linear on
each simplex of $\nabla_Q$, and $\gamma_R$ and $\rho_R$  are linear
on each simplex of $\nabla_R$.
  Lemma \ref{Lem_TrianofVstrictmap}  yields  regular triangulations
   $\Lambda_P$, $\Lambda_Q$, $\Lambda_R$ of $P$, $Q$, $R$
   such that
   \begin{itemize}
     \item[(i)] $\eta$ and $\mu$ are linear on each simplex of $\Lambda_P$;
     \item[(ii)] $\eta(\Lambda_P)\subseteq \Lambda_Q$ and
$\mu(\Lambda_P)\subseteq \Lambda_R$;
     \item[(iii)] $\Lambda_Q$, $\Lambda_R$ are subdivisions of
$\nabla_Q$, $\nabla_R$.
   \end{itemize}
Thus
  $\gamma_Q$ and $\rho_Q$ are linear on each simplex of $\Lambda_Q$,
and $\gamma_R$ and $\rho_R$  are linear on each simplex of
$\Lambda_R$.

As a consequence,
  $\rho_Q(\Lambda_Q)\cap\rho(\Lambda_R)=\rho_Q\circ\eta(\Lambda_P)=\rho_R\circ\mu(\Lambda_P)$
  is a regular triangulation of
$\rho_Q(Q)\cap\rho_R(R)=\rho_Q\circ\eta(P)=\rho_R\circ\mu(P)$.
   Thus, $\rho_Q(\Lambda_Q)\cup\rho(\Lambda_R)=\Lambda$ is a well
defined simplicial complex.
   Moreover, $\Lambda$ is a regular triangulation of $\Pol(W)$.

   Finally, let $\zeta\colon \Pol(W)\rightarrow U$ be the unique
\Zed-map given by Corollary \ref{Cor_ExtensionToZed} which is linear
over each simplex of $\Lambda$ and
   also satisfies
   $$
     \zeta(v)=\left\{\begin{tabular}{rl}
                 $\gamma_Q(x)$& if $v=\rho_Q(x)$\\
                 $\gamma_R(y)$& if $v=\rho_R(y)$,
              \end{tabular}\right.
   $$
   on each vertex $v$ of $\Lambda$.
   Since  $\rho_Q(\nabla_Q)\cap\rho(\nabla_R)=\rho_Q\circ\eta(\nabla_P)$
   then $\zeta$ is well defined.
For any vertex
$x$  of  $\nabla_Q$,
  $\gamma_Q(x)=\zeta(\rho(x))$.
  Since $\gamma$ is linear over each simplex of
  $\nabla_Q$ then $\gamma_Q=\zeta\circ\rho_Q$.
   Similarly, $\gamma_R=\zeta\circ\rho_R$,
   which proves that $\Pol(W)$ is the pushout $Q\coprod_P R$ in $\wasc$.
\end{proof}

\begin{corollary}
\label{corollary:scazonte}
Let    $(G_1,u_1)$, $(G_2,u_2)$, $(G_3,u_3)$ be
finitely presented unital $\ell$-groups
with  onto homomorphisms
$f\colon G_1\rightarrow G_3$,  $g\colon G_2\rightarrow G_3$.
Then the fiber product $G=\{(a,b)\in G_1\times G_2\mid f(a)=g(b)\}$
  (with $(u_1,u_2)$  as the distinguished order  unit)
is a finitely presented unital $\ell$-group.
\end{corollary}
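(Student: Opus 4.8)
The plan is to pass to the dual side via the equivalence of Theorem~\ref{Theo_Baker-Beynon} and to realize $G$ as the $\McN$-image of a pushout of strict $\Zed$-maps. First I would fix $\Zed$-homeomorphisms $G_i\cong\McN(P_i)$ for rational polyhedra $P_1,P_2,P_3$, carrying each order unit $u_i$ to the constant function $1$ on $P_i$. Under the duality the onto homomorphisms $f\colon G_1\to G_3$ and $g\colon G_2\to G_3$ take the form $\McN(\bar f)$ and $\McN(\bar g)$ for uniquely determined $\Zed$-maps $\bar f\colon P_3\to P_1$ and $\bar g\colon P_3\to P_2$.

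The first substantive step is to record that $\bar f$ and $\bar g$ are strict. Since $f=\McN(\bar f)$ and $g=\McN(\bar g)$ are onto, Theorem~\ref{Theo_StricOnto} tells us precisely that $\bar f$ and $\bar g$ are strict $\Zed$-maps. This is where the hypothesis that $f$ and $g$ are onto is essential: it is exactly the condition that lets us invoke Theorem~\ref{Theo-Colimits}, which guarantees pushouts only of strict $\Zed$-maps. (For a general monic dual map no pushout need exist in $\QP$, which is why the analogous statement fails once ``onto'' is weakened to ``one-one''.)

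Next I would form the pushout $P=P_1\coprod_{P_3}P_2$ of the span $P_1\xleftarrow{\bar f}P_3\xrightarrow{\bar g}P_2$; by Theorem~\ref{Theo-Colimits} it exists in $\QP$ and is a rational polyhedron, with pushout injections $\rho_1\colon P_1\to P$ and $\rho_2\colon P_2\to P$ satisfying $\rho_1\circ\bar f=\rho_2\circ\bar g$. Applying the contravariant functor $\McN$ turns this pushout square into a pullback square in $\FP$: one gets the compatibility $f\circ\McN(\rho_1)=g\circ\McN(\rho_2)$, so that $\McN(P)$ is a finitely presented unital $\ell$-group and the pair $(\McN(\rho_1),\McN(\rho_2))$ defines a homomorphism $\theta\colon\McN(P)\to G_1\times G_2$ whose image lands in $G$.

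The main obstacle is the final step: checking that $\theta\colon\McN(P)\to G$ is an isomorphism of unital $\ell$-groups, i.e. that the categorical pullback computed on the dual side coincides with the set-theoretic fiber product with coordinatewise operations. Recalling that the support of $P$ equals $\rho_1(P_1)\cup\rho_2(P_2)$, glued along $\rho_1(\bar f(P_3))=\rho_2(\bar g(P_3))$ by identity~\eqref{Eq_Amalgam}, an element of $\McN(P)$ is exactly a $\Zed$-map that restricts to a compatible pair on the two pieces. Surjectivity of $\theta$ then amounts to observing that a pair $(a,b)\in\McN(P_1)\times\McN(P_2)$ with $a\circ\bar f=b\circ\bar g$ --- which, since $f(a)=\McN(\bar f)(a)=a\circ\bar f$ and $g(b)=b\circ\bar g$, is precisely the fiber-product condition $f(a)=g(b)$ --- glues to a well-defined $\Zed$-map on $P$; injectivity is immediate because $P=\rho_1(P_1)\cup\rho_2(P_2)$, so a map vanishing on both images is zero. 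Finally $\McN(\rho_i)$ sends the constant function $1$ on $P$ to the constant function $1$ on $P_i$, so $\theta$ carries the order unit of $\McN(P)$ to $(u_1,u_2)$; hence $G\cong\McN(P)$ is finitely presented, with $(u_1,u_2)$ as distinguished order unit.
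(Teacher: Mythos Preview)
Your proposal is correct and follows exactly the paper's approach: dualize via Theorem~\ref{Theo_Baker-Beynon}, use Theorem~\ref{Theo_StricOnto} to see that the dual maps are strict, build the pushout via Theorem~\ref{Theo-Colimits}, and identify $\McN$ of the pushout with the set-theoretic fiber product (your use of~\eqref{Eq_Amalgam} makes explicit the ``observation'' the paper leaves to the reader). One cosmetic slip: in your first paragraph you mean unital $\ell$-isomorphisms $G_i\cong\McN(P_i)$, not $\Zed$-homeomorphisms.
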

\begin{proof}
The result
  follows from Theorems \ref{Theo_Baker-Beynon}, \ref{Theo_StricOnto}
and \ref{Theo-Colimits},
  upon observing that for all rational polyhedra $P,Q,R$ and strict
\Zed-maps $\eta\colon P\rightarrow Q$, $\mu\colon P\rightarrow R$ the
unital $\ell$-group $\McN(Q\coprod_P R)$ is isomorphic to the fiber
product $\{(f,g)\in \McN(Q)\times \McN(R)\mid f\circ\eta=g\circ\mu\}$.
\end{proof}

In Corollary \ref{Cor_FiberProj} we will prove that if in
Corollary \ref{corollary:scazonte}
we also assume
that $(G_1,u_1)$, $(G_2,u_2)$, and $(G_3,u_3)$ are projective, then so is
   $G$.

\section{Exact unital $\ell$-groups}

Working in the framework of   intuitionistic logic, in
  his paper \cite{DJ1982},  de Jongh  calls  ``exact'' a formula
   $\varphi$ such that the Heyting algebra
   presented by $\varphi$ is embeddable into a free Heyting algebra
(see also Section \ref{Sec_admissible}).

   Accordingly, in this paper we say that
a  unital $\ell$-group $(G,u)$ is   {\it exact} if it is finitely presented
and there exist a positive integer $n$ and a one-one unital
$\ell$-homomorphism $g $ of $(G,u)$ into the free
unital $\ell$-group  $\McN([0,1]^{n})$. By Lemma \ref{Lem_ImagesZmorph0} and Theorem \ref{Theo_Baker-Beynon},  a unital $\ell$-group is exact iff it is isomorphic to a finitely generated unital $\ell$-subgroup of $\McNn$. This equivalent definition can also be obtained as an application of the equivalence between MV-algebras and unital $\ell$-groups and
\cite[Corollary 6.6]{Mu2011}.

In this section we will give a characterization of exact unital $\ell$-groups.

A unital $\ell$-group $(G,u)$ is {\it projective} if whenever
$\psi\colon (G_1,u_1)\rightarrow(G_2,u_2)$ is a unital
$\ell$-homomorphism onto $(G_2,u_2)$
and $\phi\colon (G,u)\to(G_2,u_2)$ is a unital $\ell$-homomorphism, there
is a unital $\ell$-homomorphism $\theta\colon (G,u)\to(G_1,u_1)$ such that
$\phi= \psi \circ \theta$.

A  finitely generated unital $\ell$-group $(G,u)$ is projective iff it is
a {\it retraction} of  the free unital $\ell$-group  $\McNn$ for some
$n\in\{1,2,\ldots\}$.
In other words, there is a homomorphism $\iota\colon (G,u)\to \McNn$ and a
homomorphism $\sigma\colon \McNn \to (G,u)$ such that
$\sigma\circ\iota$ is the identity map $Id_{G}$
on $G$.

Lemma \ref{Lem_ImagesZmorph} yields the following inclusions
for unital $\ell$-groups:
\begin{equation}
\label{Eq_Inclusions}
\mbox{Finitely Presented }\supseteq\mbox{ Exact }\supseteq\mbox{
Finitely generated projective}.
\end{equation}

As shown in
  \cite{CM20XX},  the class of finitely generated projective unital
$\ell$-groups (includes but)
   does not coincide with the class of finitely presented unital $\ell$-groups.

   An example of   an exact unital
$\ell$-group which is  not projective is as follows:

\begin{example}
  Let $P=\{(x,y)\in[0,1]^{2}\mid x\in\{0,1\}\mbox{ or } y\in\{0,1\}\}$.
  The map $\eta\colon [0,1]\rightarrow P$ defined by
$$
\eta(a)=\left\{\begin{tabular}{ll}
          $(3x,0)$& if $0\leq a\leq 1/3$,\\
          $(1,6x-2)$& if $1/3\leq a\leq {1/2}$,\\
          $(4-6x,1)$& if $1/2\leq a\leq {2}/{3}$,\\
          $(0,3-3x)$& if ${2}/{3}\leq a\leq 1$,
         \end{tabular}\right.
$$
is a \Zed-map onto $P$. By Theorem \ref{Theo_monicepi},
$\McN(\eta)\colon\McN(P)\rightarrow \McN([0,1])$ is one-one.
Then $\McN(P)$ is exact. Since $P$ is not simply connected, by
  \cite[Theorem 4.2]{CM20XX}   $\McN(P)$ is not projective.
\end{example}
As is well known (see \cite{Be1977A}),
 for $\ell$-groups the three notions in
(\ref{Eq_Inclusions})  coincide.

\subsection{Strongly regular triangulations}
The notion of {\it strongly regular triangulation} was introduced in \cite[Definition 3.1]{CM20XX} and it has a key role in our characterization of exact unital $\ell$-groups.
\begin{definition}\label{definition:strongly}
   A simplex $S$ is said to  be  {\it strongly regular} if it is regular
and the greatest common divisor of the denominators of the vertices of $S$
  is equal to
$1$.
   A   triangulation $\Delta$  of a polyhedron $P\subseteq \cube$ is
said to be {\it strongly regular}
   if each maximal simplex of $\Delta$  is strongly regular.
\end{definition}

\begin{lemma}\label{Lem_subdiv preserves den}
   Let $S$ be a regular k-simplex.
   Then for every regular k-simplex $T$  such that $T\subseteq S$,
   the greatest common divisor of the denominators of the vertices of $T$
   is equal to the greatest common divisor of the denominators of the
vertices of
   $S$.
\end{lemma}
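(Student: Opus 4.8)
The plan is to pass to homogeneous coordinates and reduce the statement to a change-of-basis computation in $\Zed^{n+1}$. Write $S=\conv(v_0,\dots,v_k)$ and $T=\conv(w_0,\dots,w_k)$, and let $\widetilde{v}_0,\dots,\widetilde{v}_k$ and $\widetilde{w}_0,\dots,\widetilde{w}_k$ be the corresponding homogeneous correspondents in $\Zed^{n+1}$. Set $d_S=\gcd(\den(v_0),\dots,\den(v_k))$ and $d_T=\gcd(\den(w_0),\dots,\den(w_k))$; the goal is $d_S=d_T$. The crucial observation is that the last coordinate of $\widetilde{v}_i$ equals $\den(v_i)$, and similarly for $\widetilde{w}_j$, so if I can show that the two families $\{\widetilde{v}_i\}$ and $\{\widetilde{w}_j\}$ are related by a unimodular integer matrix, the gcd identity will drop out by projecting onto the last coordinate.

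First I would establish that both families are $\Zed$-bases of one and the same sublattice of $\Zed^{n+1}$. Let $V\subseteq\R^{n+1}$ be the $(k+1)$-dimensional rational subspace spanned by $\widetilde{v}_0,\dots,\widetilde{v}_k$, and put $L=V\cap\Zed^{n+1}$. Since $S$ is regular, $\widetilde{v}_0,\dots,\widetilde{v}_k$ is part of a $\Zed$-basis of $\Zed^{n+1}$; expanding an arbitrary lattice point in such a basis and intersecting with $V$ shows that $\widetilde{v}_0,\dots,\widetilde{v}_k$ is in fact a $\Zed$-basis of the whole of $L$. Because $T\subseteq S$, every vertex $w_j$ is a convex combination of the $v_i$, so $(w_j,1)$ lies in $V$ and hence $\widetilde{w}_j\in V\cap\Zed^{n+1}=L$. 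As $T$ is also a regular $k$-simplex spanning the same affine space as $S$, the $\widetilde{w}_j$ are $k+1$ linearly independent lattice points whose rational span is again $V$; running the previous argument with $T$ in place of $S$ shows that $\widetilde{w}_0,\dots,\widetilde{w}_k$ is likewise a $\Zed$-basis of $L$.

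Consequently the transition matrix $A\in\Zed^{(k+1)\times(k+1)}$ defined by $\widetilde{w}_j=\sum_i A_{ji}\widetilde{v}_i$ is unimodular, i.e. $\det A=\pm1$ and $A^{-1}$ is again an integer matrix. Applying the projection $\pi\colon\R^{n+1}\to\R$ onto the last coordinate, and using $\pi(\widetilde{v}_i)=\den(v_i)$ and $\pi(\widetilde{w}_j)=\den(w_j)$, the relation $\widetilde{w}_j=\sum_i A_{ji}\widetilde{v}_i$ gives $\den(w_j)=\sum_i A_{ji}\den(v_i)$, so $d_S$ divides every $\den(w_j)$ and hence $d_S\mid d_T$; applying $\pi$ to $\widetilde{v}_i=\sum_j(A^{-1})_{ij}\widetilde{w}_j$ gives the reverse divisibility $d_T\mid d_S$. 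Therefore $d_S=d_T$, as desired.

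I expect the one genuinely substantive point to be the middle paragraph, precisely the claim that $\{\widetilde{v}_i\}$ spans all of $L=V\cap\Zed^{n+1}$ and not merely a proper finite-index sublattice. This is exactly where regularity is used and cannot be dropped: it is the saturation (primitivity) of the sublattice generated by the homogeneous correspondents of a regular simplex that forces both families to generate the identical lattice $L$, and thereby makes $A$ unimodular. Once this is in hand, everything else is the routine extraction of denominators via the last-coordinate projection.
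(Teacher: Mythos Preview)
Your proof is correct and takes a genuinely different route from the paper's. The paper invokes the weak Oda theorem (Lemma~\ref{lemma:weakoda}) to reduce to the case where $T$ arises from $S$ by a single Farey blow-up at the mediant $v$; since $\den(v)=\sum_j\den(v_j)$, the gcd of the vertex denominators is manifestly unchanged under such a step, and an induction along a chain of Farey blow-ups and blow-downs finishes the argument. You instead work directly with the lattice $L=V\cap\Zed^{n+1}$: regularity of $S$ and of $T$ forces each family of homogeneous correspondents to be a $\Zed$-basis of this same $L$, and the unimodular transition matrix, read off on the last coordinate, yields the gcd equality at once. Your argument is more elementary in that it avoids the Morelli--W\l odarczyk result entirely and uses only the definition of regularity, while the paper's proof is shorter to state and meshes with its running theme of Farey subdivisions. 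Both are valid; yours is the more self-contained.
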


\begin{proof}
   In view of Lemma \ref{lemma:weakoda} it is no loss of
   generality to assume that
    $T$ is  one of the maximal simplexes obtained by blowing up
    $S$ at the Farey mediant $v$ of its vertices.
   Let $v_1,\ldots,v_k$ be the vertices of $S$.
   Since $\den(v)$ is equal to  $\sum_{j=0}^k \den(v_j)$, then
     for each $i=1,\ldots,k$
     the
      greatest common divisor of  the integers
$\den(v_1),\ldots,\den(v_k)$ coincides with the greatest common
divisor of the set of integers
   $$
    \{ \den(v_0),\ldots,\den(v_{i-1}), \den(v),\den(v_{i+1}),\ldots,\den(v_k)\}
   $$
Thus the
greatest common divisor of the denominators of the vertices $T$
   is equal to the greatest common divisor of the denominators of the
vertices $S$.
\end{proof}

The following result was first proved in \cite[Lemma 3.2]{CM20XX}:

\begin{corollary}
   \label{Cor-BlowupPreserGCD}
   Let  $\Delta$ and $\nabla$ be regular triangulations of a
polyhedron $P\subseteq\cube$.
   Then $\Delta$ is strongly regular iff so is $\nabla$.
\end{corollary}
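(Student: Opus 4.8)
The plan is to reduce the general case to the situation covered by the preceding Lemma~\ref{Lem_subdiv preserves den}, exploiting the fact that by Lemma~\ref{lemma:weakoda} any two regular triangulations of $P$ are connected by a finite path of Farey blow-ups and blow-downs. First I would observe that it suffices to prove the following local statement: if $\nabla$ is obtained from $\Delta$ by a single Farey blow-up, then $\Delta$ is strongly regular iff $\nabla$ is strongly regular. Once this single-step invariance is established, an immediate induction along the (finite) path connecting any two regular triangulations of $P$ yields the corollary, since a blow-down is merely the inverse of a blow-up and hence preserves the property in the reverse direction as well.

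For the single-step claim, let $\nabla=\Delta_{(v)}$ be the Farey blow-up of $\Delta$ at the Farey mediant $v$ of some simplex $S\in\Delta$. I would argue as follows. The maximal simplexes of $\nabla$ are of two kinds: those that are also maximal simplexes of $\Delta$ (the ones not containing $S$, which are left untouched), and the new simplexes $\conv(F\cup\{v\})$ arising from the stellar subdivision of each maximal simplex $S'\supseteq S$ of $\Delta$. The unaffected maximal simplexes trivially have the same gcd of vertex denominators in both triangulations, so I only need to control the new ones. For a new maximal simplex $T=\conv(F\cup\{v\})$ contained in a maximal $S'\in\Delta$, Lemma~\ref{Lem_subdiv preserves den} applies directly: $T\subseteq S'$ and both are regular $k$-simplexes of the same dimension, so the gcd of the denominators of the vertices of $T$ equals the gcd of the denominators of the vertices of $S'$. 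Hence $T$ is strongly regular iff $S'$ is. This establishes that the collection of gcd-values over maximal simplexes is preserved (value-by-value across the subdivision) under a single Farey blow-up, giving the desired equivalence.

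The main obstacle, and the point requiring care, is the bookkeeping of dimensions: Lemma~\ref{Lem_subdiv preserves den} is stated for a regular $k$-simplex $T$ contained in a regular $k$-simplex $S$ of the \emph{same} dimension $k$, so I must verify that each new maximal simplex $T$ of $\nabla$ sits inside a maximal simplex $S'$ of $\Delta$ of the same dimension. This is exactly what the blow-up construction guarantees, since blowing up at an interior Farey mediant subdivides each maximal simplex into full-dimensional pieces of the same dimension; one only needs to check that a maximal simplex of $\nabla$ cannot be a proper face of larger dimension within $\Delta$, which follows because the stellar subdivision refines each maximal simplex into simplexes of the same dimension as the original. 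Once this dimension-matching is confirmed, the rest is a routine application of the cited lemma together with the connectedness result of Lemma~\ref{lemma:weakoda}, and the induction closes the argument.
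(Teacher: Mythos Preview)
Your argument is correct and matches the derivation the paper intends: connect the two triangulations by a finite chain of Farey blow-ups and blow-downs via Lemma~\ref{lemma:weakoda}, and for a single step apply Lemma~\ref{Lem_subdiv preserves den} to each new maximal simplex, which sits inside the old maximal simplex it came from with the same dimension. The dimension-matching you flag is indeed automatic from the definition of the stellar subdivision, so there is no gap.
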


Since strong regularity does not depend on the regular triangulation
$\Delta$ of $P$,
without fear of ambiguity we may say that  a polyhedron $P$
  is {\it strongly regular} if some (equivalently,  each) of its regular
triangulations
   is strongly regular.

\begin{example}\label{Ex_CubeStronglyTriang}
   The $n$-cube $\cube$ is strongly regular.
   To see this, let us equip the set $\{0,1\}^{n}$  with the following
partial order $(a_1,\ldots,a_1)\leq (b_1,\ldots,b_n)$ iff
$a_i\leq b_i$ for each $i=1,\ldots,n$.
   Let $\Delta$ be the triangulation of $\cube$ formed by the
simplexes $\conv(C)$ whenever $C$ is a chain in the poset
$(\{0,1\}^{n},\leq)$.
   This is called the {\em standard triangulation} of the cube in
\cite[p. 60]{Se1982}.
   Since the denominator of every vertex of $\Delta$ is $1$, it
follows that $\Delta$ is strongly regular. The desired conclusion now
follows from Corollary \ref{Cor-BlowupPreserGCD}.
\end{example}

For all
 $v,w\in\R^{n}$, we let $\dist(v,w)$ denote
  their Euclidean distance in $\R^{n}$.
For each $0\leq\delta\in\R$ and $v\in\R^{n}$, we use
the notation
$B(\delta,v)=\{w\in\R^{n}\mid \dist(v,w)<\delta\}$.
The
dimension
of the ambient space will always be clear from the context.

\medskip
{}From
  Lemma \ref{Lem_subdiv preserves den}
  we obtain the following characterization of strong regularity:
\begin{corollary}\label{Cor_SRandCoprvectors}
   Let $P\subseteq\R^{n}$ be a polyhedron. Then the following
   conditions  are equivalent:
   \begin{itemize}
   \item[(i)] $P$ is strongly regular.
   \item[(ii)] For each $v\in P$ and each $0<\delta\in\R$, there exists
   $w\in P$ such that $\dist(v,w)<\delta,$ and $\den(v)$ and $\den(w)$ are
   {\em coprime}, in the sense that ${\rm gcd}(\den(v),\den(w))=1$.
   \end{itemize}
\end{corollary}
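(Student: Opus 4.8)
The plan is to establish the equivalence by proving both implications, using Lemma \ref{Lem_subdiv preserves den} as the engine that controls how gcd's of vertex-denominators behave under subdivision. The key observation is that strong regularity of $P$ is an \emph{intrinsic} property (by Corollary \ref{Cor-BlowupPreserGCD} it does not depend on the chosen regular triangulation), so I am free to refine any fixed triangulation $\Delta$ of $P$ as needed without disturbing the hypothesis.

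For the direction (i)$\Rightarrow$(ii), I would fix a point $v \in P$ and $\delta > 0$, and choose a regular triangulation $\Delta$ of $P$; let $S$ be a maximal simplex of $\Delta$ containing $v$. By strong regularity, $\gcd$ of the denominators of the vertices of $S$ equals $1$. The idea is to produce a rational point $w$ close to $v$ inside $S$ whose denominator is coprime to $\den(v)$. First I would repeatedly blow up $S$ at Farey mediants to obtain a subdivision in which $v$ lies in a very small maximal simplex $T\subseteq S$ with $v$ as one of its vertices (so all vertices of $T$ lie within $\delta$ of $v$); by Lemma \ref{Lem_subdiv preserves den}, $T$ still has vertex-denominator-gcd equal to $1$. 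Since $\gcd$ over the finitely many vertices of $T$ is $1$, and $v$ is among them, there must be a vertex $w$ of $T$ (or a short product/combination of them via further Farey mediants) whose denominator is coprime to $\den(v)$: concretely, writing $\den(v)=\prod p_i^{a_i}$, for each prime $p_i$ dividing $\den(v)$ there is some vertex of $T$ whose denominator is not divisible by $p_i$, and one can combine these witnesses by taking successive Farey mediants (whose denominator is the sum of the summands' denominators) to manufacture a single nearby vertex $w$ with $\gcd(\den(v),\den(w))=1$. This $w$ satisfies $\dist(v,w)<\delta$ as required.

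For the converse (ii)$\Rightarrow$(i), I would argue by contraposition: assume $P$ is not strongly regular, so in any regular triangulation $\Delta$ some maximal simplex $S$ has all its vertex-denominators sharing a common prime factor $p$. Let $v$ be the Farey mediant (or any interior rational point) of $S$; since all vertices of $S$ have denominator divisible by $p$, any rational point $w$ sufficiently close to $v$ must also lie in $S$ (for small enough $\delta$, $B(\delta,v)\cap P \subseteq S$ by choosing $v$ in the relative interior), and Lemma \ref{Lem_subdiv preserves den} forces every regular simplex $T\subseteq S$ to have vertex-denominator-gcd divisible by $p$, so $\den(w)$ is divisible by $p$ as well. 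Then $\gcd(\den(v),\den(w))\geq p>1$ for all such $w$, violating (ii).

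The main obstacle I anticipate is the explicit construction in (i)$\Rightarrow$(ii) of a \emph{single} nearby point whose denominator is coprime to $\den(v)$, rather than merely a collection of points whose denominators have no common factor. Lemma \ref{Lem_subdiv preserves den} guarantees the collective gcd is $1$, but converting this into one well-placed point requires care: the cleanest route is to iterate Farey blow-ups so that the relevant vertices are arbitrarily close to $v$, and then use that the Farey-mediant operation \emph{adds} denominators, allowing one to build a point whose denominator avoids each prime factor of $\den(v)$ simultaneously by the Chinese-Remainder-style combination of the prime-avoiding witnesses. Verifying that such a combination stays within $\delta$ of $v$ and lands inside $P$ is the delicate bookkeeping step.
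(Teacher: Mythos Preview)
Your proposal is correct and aligns with the paper's intent: the paper gives no explicit argument, simply recording the corollary as a consequence of Lemma~\ref{Lem_subdiv preserves den}, which is exactly the engine you use in both directions. The bookkeeping you flag in (i)$\Rightarrow$(ii) is real but easily discharged: once $T\subseteq S$ is a maximal regular simplex of diameter $<\delta$ containing $v$ (you need not insist that $v$ be a vertex), its vertex-denominators $d_0,\dots,d_k$ have $\gcd$ equal to $1$ by the Lemma, so the numerical semigroup $\langle d_0,\dots,d_k\rangle$ contains all sufficiently large integers---in particular one coprime to $\den(v)$---and the corresponding rational point of $T$ is your $w$.
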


For any set $T\subseteq\R^{n}$, we let
${\rm aff}(T)$ denote the {\it affine hull} of $T$,  i.e.
 $${\rm
aff}(T)=\left\{\sum_{i=0}^{m}\lambda_i v_i\mid \mbox{for some }v_i\subseteq T, \lambda_i\in\R, \sum_{i=0}^{m}\lambda_i=1\mbox{ and }m=1,2,\ldots
\right\}.
$$
Further,   ${\rm relint}(T)$ denotes the
relative interior of $T$, that is, the interior of $T$ in the relative
topology of  ${\rm aff}(T)$.

\bigskip
For later use in the proofs of
Theorems \ref{Theo_StrongPreserved}
and \ref{Thm_AnchvsStReg}, we
record here the following elementary
characterization:

\begin{lemma}\label{Lem_MaxSimpTriang}
   Let $\Delta$ be a triangulation of a polyhedron $P\subseteq \R^{n}$
and $T\in \Delta$. Then the following conditions are equivalent:
   \begin{itemize}
    \item[(i)] $T$ is maximal in $\Delta$.
    \item[(ii)] Whenever    $w\in{\rm relint}(T)$ and
      $v\in\R^{n}$ does not lie in the affine hull
    of  $T$, then $\conv(v,w)$ is not contained in $P$.
    \item[(iii)] For every $v\in{\rm relint}(T)$ there exists
$0<\delta\in\R$ such that $B(\delta,v)\cap P\subseteq T$.
   \end{itemize}
\end{lemma}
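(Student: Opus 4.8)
The plan is to prove the cycle (i)$\Rightarrow$(ii)$\Rightarrow$(iii)$\Rightarrow$(i), since each implication is most naturally phrased in that direction. The statement characterizes maximality of a simplex $T$ in a triangulation $\Delta$ of $P$ via two geometric conditions: a ``no outward cone'' condition (ii) and a ``locally absorbing'' condition (iii). The key observation is that maximality of $T$ means $T$ is not a proper face of any other simplex of $\Delta$, and in a simplicial complex this has clean local consequences at points of the relative interior $\mathrm{relint}(T)$.

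First I would prove (i)$\Rightarrow$(ii) by contraposition. Suppose there exist $w\in\mathrm{relint}(T)$ and $v\notin\mathrm{aff}(T)$ with $\conv(v,w)\subseteq P$. Taking $v'$ on the open segment from $w$ towards $v$ sufficiently close to $w$, the point $v'$ lies in $P=|\Delta|$, so it lies in some simplex $S\in\Delta$; as $v'\notin\mathrm{aff}(T)$ the simplex $S$ cannot be $T$ or a face of $T$. Because $w\in\mathrm{relint}(T)$ and $w$ is a limit of points $v'\in S$, the intersection properties of a simplicial complex force $T$ to be a face of $S$ (the carrier of $w$ must be a face of $S$), contradicting maximality. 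This is the step I expect to be \emph{the main obstacle}: making precise the claim that $T$ must be a face of the carrier $S$ requires carefully invoking the defining property of a simplicial complex (that $S\cap T$ is a common face) together with the fact that $w\in\mathrm{relint}(T)\cap \overline{S}$.

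Next, (ii)$\Rightarrow$(iii): I would argue again by contraposition. If no ball $B(\delta,v)$ is contained in $T$ for some $v\in\mathrm{relint}(T)$, then there is a sequence $v_k\to v$ with $v_k\in P\setminus T$. Since $\Delta$ is finite, infinitely many $v_k$ lie in a single simplex $S\neq T$ with $S\not\subseteq T$; passing to the limit, $v\in \overline S$, and since $v\in\mathrm{relint}(T)$ this means $T\subseteq S$ as faces, so $\dim S>\dim T$. Choosing a vertex $v^{\ast}$ of $S$ outside $\mathrm{aff}(T)$ gives $\conv(v^{\ast},v)\subseteq S\subseteq P$, which violates (ii). Finally, (iii)$\Rightarrow$(i) is the easiest direction: if $T$ were a proper face of some $S\in\Delta$, then any $v\in\mathrm{relint}(T)$ would have every neighborhood meeting $\mathrm{relint}(S)\subseteq P$, so no ball around $v$ could be contained in $T$, contradicting (iii). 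Throughout, the finiteness of $\Delta$ and the intersection axiom for simplicial complexes are the essential tools, and I would state the carrier/face lemma explicitly once rather than re-deriving it in each implication.
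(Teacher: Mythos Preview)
The paper does not actually supply a proof of this lemma: it is introduced with the phrase ``we record here the following elementary characterization'' and is immediately followed by the next theorem, with no \texttt{proof} environment in between. So there is no paper proof to compare against.

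Your proposed argument is correct and would serve perfectly well as the omitted proof. The cycle (i)$\Rightarrow$(ii)$\Rightarrow$(iii)$\Rightarrow$(i) is the natural route, and each step is sound. The one place that deserves a sentence of extra care is the repeated use of the ``carrier'' argument: if $w\in\mathrm{relint}(T)$ lies in a simplex $S\in\Delta$, then $S\cap T$ is a common face of $S$ and $T$, and the only face of $T$ meeting $\mathrm{relint}(T)$ is $T$ itself, so $T\subseteq S$. You invoke this in both (i)$\Rightarrow$(ii) and (ii)$\Rightarrow$(iii), and you correctly note that finiteness of $\Delta$ is needed to pigeonhole a convergent sequence into a single closed simplex $S$ before applying it. In (iii)$\Rightarrow$(i), rather than appealing to $\mathrm{relint}(S)$, it is slightly cleaner to take the segment from $v$ to the barycentre of $S$: for small $t>0$ the point $(1-t)v+t\,\mathrm{bary}(S)$ lies in $S\subseteq P$ but outside $\mathrm{aff}(T)$, hence outside $T$, which already contradicts (iii).
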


\begin{theorem}\label{Theo_StrongPreserved}
   Let $P$ and $Q$ be polyhedra and $\eta\colon P\rightarrow Q$
   be a \Zed-map onto $Q$.
   If $P$ has a strongly regular triangulation then $Q$ has a strongly
regular triangulation.
\end{theorem}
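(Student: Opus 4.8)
The goal is to transfer strong regularity of $P$ to $Q$ along a surjective $\Zed$-map $\eta$. The cleanest route is to use the local characterization of strong regularity in Corollary~\ref{Cor_SRandCoprvectors}(ii): a polyhedron is strongly regular precisely when every point can be approximated arbitrarily well by rational points of coprime denominator. Thus it suffices to show that for each $v\in Q$ and each $\delta>0$ there is $w\in Q$ with $\dist(v,w)<\delta$ and $\gcd(\den(v),\den(w))=1$. The idea is to pull $v$ back to $P$, exploit strong regularity \emph{there}, and push the resulting coprime-denominator approximant forward, keeping denominators under control via Corollary~\ref{Cor_Div_Denominators}.

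\textbf{Key steps.} First I would fix $v\in Q$ and $\delta>0$, and choose a point $p\in P$ with $\eta(p)=v$, which exists since $\eta$ is onto. Next I would select a regular triangulation $\Delta$ of $P$ over which $\eta$ is linear (Proposition~\ref{proposition:poly} together with Lemma~\ref{Lem_StableTriangul}); let $S\in\Delta$ be a maximal simplex containing $p$, and let $\eta_S$ be the integer affine map that $\eta$ agrees with on $S$. Because $\eta_S$ is affine with a fixed linear part, it is Lipschitz, so there is $\delta'>0$ with $\eta(B(\delta',p)\cap S)\subseteq B(\delta,v)$. Since $P$ is strongly regular and $S$ is maximal, Corollary~\ref{Cor_SRandCoprvectors} (applied on $P$, using Lemma~\ref{Lem_MaxSimpTriang}(iii) to stay inside $S$) yields a point $q\in S$ with $\dist(p,q)<\delta'$ and $\den(q)$ coprime to $\den(p)$. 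The plan is then to set $w=\eta(q)=\eta_S(q)$, so that $\dist(v,w)<\delta$, and to argue that $\den(w)$ is coprime to $\den(v)$.

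\textbf{The main obstacle.} The delicate point is the last denominator computation: Corollary~\ref{Cor_Div_Denominators} gives only that $\den(w)=\den(\eta(q))$ \emph{divides} $\den(q)$ and that $\den(v)$ divides $\den(p)$. Since $\gcd(\den(p),\den(q))=1$, any divisor of $\den(q)$ is coprime to any divisor of $\den(p)$; hence $\gcd(\den(v),\den(w))=1$ follows immediately. This is exactly why I approximate $q$ to have denominator coprime to $\den(p)$ (rather than to $\den(v)$): it is the pullback denominators that must be coprime, and divisibility then transports coprimality forward automatically. So in fact the arithmetic is not the genuine difficulty. The real care is geometric: I must guarantee that the approximant $q$ produced by strong regularity of $P$ can be taken \emph{inside the maximal simplex} $S$ on which $\eta$ is linear, so that $\eta(q)=\eta_S(q)$ and the Lipschitz estimate applies; this is secured by combining the neighborhood form of strong regularity with the maximality criterion of Lemma~\ref{Lem_MaxSimpTriang}(iii), which confines a small ball around an interior point of $S$ to $S$ itself. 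Finally, having verified the coprime-approximation property at every $v\in Q$, Corollary~\ref{Cor_SRandCoprvectors} in the reverse direction shows $Q$ is strongly regular, completing the proof.
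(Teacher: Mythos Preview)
Your approach is essentially the same as the paper's: pull back, use the coprime-denominator characterization of strong regularity on $P$, push forward, and invoke the divisibility property of $\Zed$-maps (Corollary~\ref{Cor_Div_Denominators}) to transport coprimality. The paper phrases the target side slightly differently---it fixes a maximal simplex of a regular triangulation of $Q$ and shows the gcd of its vertex denominators is $1$---but the mechanism is identical.

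There is one genuine gap you should close. You choose $p\in P$ with $\eta(p)=v$ and then freely use $\den(p)$, but an arbitrary preimage of a rational point under a $\Zed$-map need not be rational (think of a coordinate projection). The fix is easy: $\eta^{-1}(v)$ is a nonempty rational polyhedron (on each simplex where $\eta$ is linear it is the intersection of a rational simplex with a rational affine subspace), so it contains a rational point; take $p$ to be such a point. Once $p$ is rational, Corollary~\ref{Cor_Div_Denominators} gives $\den(v)\mid\den(p)$ and the rest of your arithmetic goes through verbatim.

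Finally, the detour through a maximal simplex $S$ of the triangulation of $P$ is unnecessary and, as written, incomplete: Lemma~\ref{Lem_MaxSimpTriang}(iii) needs $p\in\mathrm{relint}(S)$, which you have not arranged. But you do not need $q\in S$ at all. Plain continuity of $\eta$ gives $\delta'>0$ with $\eta\bigl(B(\delta',p)\cap P\bigr)\subseteq B(\delta,v)$, and Corollary~\ref{Cor_Div_Denominators} applies to any rational $q\in P$, regardless of which linear piece it sits in. Dropping the simplex bookkeeping streamlines the argument and removes the dependence on $p$ lying in a relative interior.
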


\begin{proof}
   Let $\Delta$ be a regular triangulation of $Q$ and $S$  a maximal
simplex of $\Delta$.
   Let $d$ denote the greatest common divisor of the denominators of the vertices of $S$.
   Let $v\in {\rm relint}(S)$. By
    Lemma \ref{Lem_MaxSimpTriang} there exists $0<\epsilon\in\R$ such
that $B(\epsilon,v)\cap Q\subseteq S$.
    Since $\eta$ is a continuous onto map,
     there exist $w\in P$ and  $0<\delta\in\R$ such that
$\eta(B(\delta,w)\cap P)\subseteq B(\epsilon,v)$.

Since $P$ is strongly regular,
 Corollary \ref{Cor_SRandCoprvectors}
yields  $z\in B(\delta,w)\cap P$ such that $\den(w)$ and $\den(z)$
   are  coprime. Then $\eta(w)=v,\eta(z)\in S$,
and  $d$ is a common divisor of
 $\den(\eta(w))$ and $\den(\eta(z))$.
   By Corollary \ref{Cor_Div_Denominators}, $d$ is a
   common  divisor of
   $\den(w)$  and $\den(z)$.
We conclude that  $d=1$.
\end{proof}

\subsection{Finitely generated projective unital $\ell$-groups}
We now
collect some definitions and results from \cite{CM20XX} that will be
necessary for the geometrical description of exact unital
$\ell$-groups in Theorem \ref{Theo_WeakProj}.
In
Theorem \ref{Thm_AmalProj} it is also proved that
projectiveness is preserved under fiber products of onto homomorphisms
of unital $\ell$-groups.

A $\Zed$-map $\sigma\colon P\rightarrow P$ is a
{\it \Zed-retraction of P} if $\sigma\circ\sigma = \sigma$.
The rational polyhedron $R=\sigma(P)$  is said to be a
\Zed{\it-retract of $P$}.
A rational polyhedron $Q$ is said to be a {\it \Zed-retract} if it is
a \Zed-retract of $[0,1]^{n}$ for some $n\in\{1,2,\ldots\}$.

The following is a consequence of Theorem \ref{Theo_Baker-Beynon}
(see \cite[Theorem 1.2]{CM2009} for details):

\begin{theorem}
   A  unital $\ell$-group $(G,u)$ is finitely generated projective iff   it is isomorphic to $\McN(P)$ for some \Zed-retract $P$.
\end{theorem}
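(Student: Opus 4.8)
The plan is to prove both directions of the equivalence, using the duality of Theorem~\ref{Theo_Baker-Beynon} to translate the algebraic statement about finitely generated projective unital $\ell$-groups into a purely geometric statement about $\Zed$-retracts, and then to verify the geometric side directly. The key observation is that projectivity is a diagram-chasing property that, under the contravariant functor $\McN$, dualizes cleanly: a unital $\ell$-group $(G,u)$ is a retract of a free object $\McNn$ precisely when the dual polyhedron is a $\Zed$-retract of $[0,1]^n$. So the whole argument rests on unwinding the definition of retraction on one side and matching it to the definition of $\Zed$-retraction on the other.

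For the direction ($\Leftarrow$), suppose $P$ is a $\Zed$-retract, so there is a $\Zed$-retraction $\sigma\colon[0,1]^n\to[0,1]^n$ with $\sigma\circ\sigma=\sigma$ and $P=\sigma([0,1]^n)$. Let $\iota\colon P\hookrightarrow[0,1]^n$ be the inclusion, which is a $\Zed$-map, and write $\sigma=\iota\circ\sigma'$ where $\sigma'\colon[0,1]^n\to P$ is the corestriction of $\sigma$; then $\sigma'\circ\iota=\mathrm{Id}_P$. Applying the functor $\McN$ and using that $\McN([0,1]^n)=\McNn$ is free, I would set $G=\McN(P)$ and obtain unital $\ell$-homomorphisms $\McN(\iota)\colon\McNn\to\McN(P)$ and $\McN(\sigma')\colon\McN(P)\to\McNn$ with $\McN(\iota)\circ\McN(\sigma')=\McN(\sigma'\circ\iota)=\McN(\mathrm{Id}_P)=\mathrm{Id}_{\McN(P)}$ by functoriality. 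This exhibits $\McN(P)$ as a retract of $\McNn$, hence finitely generated projective. I would also need to note that $\McN(P)$ is finitely generated, which follows because $P$ is $\Zed$-homeomorphic to a polyhedron in a cube (as recorded after equation~\eqref{Eq:IsoinCube}) and $\McNn$ is generated by the coordinate maps together with the unit.

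For the direction ($\Rightarrow$), suppose $(G,u)$ is finitely generated projective. By the characterization stated just before the theorem, $(G,u)$ is a retraction of some $\McNn$: there are unital $\ell$-homomorphisms $\iota\colon(G,u)\to\McNn$ and $\sigma\colon\McNn\to(G,u)$ with $\sigma\circ\iota=\mathrm{Id}_G$. By duality, $\McNn=\McN([0,1]^n)$, and the composite $e=\iota\circ\sigma\colon\McNn\to\McNn$ is an idempotent unital $\ell$-endomorphism of the free object. Dualizing, $e$ corresponds to an idempotent $\Zed$-map $\sigma^\ast\colon[0,1]^n\to[0,1]^n$ (contravariance reverses the order of composition but preserves idempotency), i.e.\ a $\Zed$-retraction of $[0,1]^n$, whose image $R=\sigma^\ast([0,1]^n)$ is a $\Zed$-retract. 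The point is then to identify $(G,u)$ with $\McN(R)$: since $\sigma\circ\iota=\mathrm{Id}_G$ exhibits $G$ as the image of the idempotent $e$, and the image of $\McN(\sigma^\ast)$ is isomorphic to $\McN(R)$ via Lemma~\ref{Lem_ImagesZmorph}, one concludes $(G,u)\cong\McN(R)$.

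The main obstacle is the careful bookkeeping in the ($\Rightarrow$) direction: I must make sure that the idempotent endomorphism $e=\iota\circ\sigma$ of $\McNn$ really dualizes to an \emph{idempotent} $\Zed$-self-map of the cube, and that the algebraic retract $G=\mathrm{im}(\sigma)$ matches the geometric retract $R=\mathrm{im}(\sigma^\ast)$ under the duality, rather than some splitting that only recovers $G$ up to the idempotent's image. This is precisely where contravariance needs watching: the fact that $\sigma\circ\iota=\mathrm{Id}_G$ (not $\iota\circ\sigma=\mathrm{Id}$) means $G$ is the retract \emph{of} the idempotent $e$, and I must confirm that $\McN$ carries the splitting $\McNn\cong G\oplus(\ker)$ of idempotents to the corresponding splitting on polyhedra so that $R$ is genuinely $\Zed$-homeomorphic to the dual of $(G,u)$. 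Since $\QP$ is the opposite of $\FP$ and idempotents split in both categories, this should go through, but verifying that the splitting is respected by the equivalence is the delicate point; everything else is routine functoriality.
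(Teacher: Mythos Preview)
Your proposal is correct and follows precisely the approach the paper indicates: the paper does not spell out a proof but simply records the statement as a consequence of the duality Theorem~\ref{Theo_Baker-Beynon} (referring to \cite[Theorem 1.2]{CM2009} for details), and your argument is exactly the standard unwinding of that duality via idempotent retractions. Your worry about the ``delicate point'' is unnecessary: once you have the idempotent $e=\iota\circ\sigma$ on $\McNn$ dualizing to the idempotent $\Zed$-map $\sigma^{\ast}$ on $[0,1]^n$, Lemma~\ref{Lem_ImagesZmorph} gives $\McN(R)\cong\McNn/\ker(\McN(\sigma^{\ast}))=\McNn/\ker(e)=\McNn/\ker(\sigma)\cong(G,u)$ directly, since $\iota$ is injective.
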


A  simplex $T\in\Sigma$
of
an abstract simplicial complex $\langle V,\Sigma\rangle$
  is said to have a {\it free face} $F$
if
\begin{itemize}
\item $\emptyset\neq F\subseteq T$ is a {\it facet} (=maximal proper subset)
  of $T$, and
\item whenever  $F\subseteq S\in\Sigma$ then $S=F$ or $S=T$.
\end{itemize}
It follows that $T$ is a maximal simplex of $\Sigma$, and the removal
from $\Sigma$ of both $T$ and $F$ results in the subcomplex $\langle
V',\Sigma'=\Sigma\setminus\{T,F\}\rangle$ of  $\langle
V,\Sigma\rangle$, where $V'=V\setminus F$ if $F$ is a singleton and
otherwise $V'=V$.
The transition  from  $\langle V,\Sigma\rangle$ to  $\langle
V',\Sigma'\rangle$ is called an {\it (abstract) elementary collapse}.
If a simplicial complex $\langle W,\Gamma\rangle$ can be obtained
from  $\langle V,\Sigma\rangle$ by a sequence of elementary collapses
we say that  $\langle V,\Sigma\rangle$ {\it collapses to}  $\langle
W,\Gamma\rangle$.
We say that
the simplicial complex
$\langle V,\Sigma\rangle$ is {\it collapsible} if it
collapses to the abstract simplicial complex consisting of one of its
vertices (equivalently,
 any of its vertices \cite[p.248]{Wh1939}).

See \cite[\S III, Definition 7.2]{Ew1996}, \cite[p.247]{Wh1939} for the geometrical
counterpart of collapsibility.
For the purposes of this paper it is enough to observe that a regular triangulation
$\Delta$ is collapsible iff its skeleton
$\mathfrak{W}(\Delta)$ is collapsible.

\begin{theorem}\cite[Theorem 6.1]{CM20XX}\label{Thm_collapsible}
   Let $P\subseteq [0,1]^{n}$ be a  polyhedron. Suppose
\begin{itemize}
   \item[(i)] $P$ has a collapsible  triangulation $\nabla$;
   \item[(ii)] $P$ contains a vertex $v$ of $\cube$;
   \item[(iii)] $P$ is strongly regular.
\end{itemize}
Then  $P$ is a $\Zed$-retract of $[0,1]^{n}$.
\end{theorem}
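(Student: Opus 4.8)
The plan is to reduce the statement to a simplicial induction along a collapse of $P$ and to realise each elementary collapse by an \emph{elementary} $\Zed$-retraction, with strong regularity guaranteeing integrality throughout. First I would replace $\nabla$ by a regular triangulation: desingularising $\nabla$ by blow-ups (which preserve collapsibility) yields a regular, collapsible triangulation $\Delta$ of $P$, and since $P$ is strongly regular this $\Delta$ is strongly regular. Applying Lemma~\ref{Lem_Triang-Subset} to $P\subseteq[0,1]^{n}$ with this $\Delta$ produces a regular triangulation $\Delta_Q$ of $Q=[0,1]^{n}$ whose restriction $\Delta_P=\{S\in\Delta_Q\mid S\subseteq P\}$ is a \emph{full} subcomplex of $\Delta_Q$ and subdivides $\Delta$; as this subdivision is again by blow-ups, $\Delta_P$ remains collapsible (equivalently, its skeleton $\mathfrak{W}(\Delta_P)$ is collapsible) and, by Corollary~\ref{Cor-BlowupPreserGCD}, strongly regular. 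Using hypothesis~(ii) together with the fact that a collapsible complex collapses to any prescribed one of its vertices, I would arrange the collapse of $\Delta_P$ to terminate at a vertex $v$ of $[0,1]^{n}$ lying in $P$, so that $\den(v)=1$.

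The heart of the argument reverses this collapse. Writing it as $\Delta_P=K_0\searrow K_1\searrow\cdots\searrow K_N=\{v\}$, I would prove by downward induction on $j$ that $|K_j|$ is a $\Zed$-retract of $[0,1]^{n}$. The base case $|K_N|=\{v\}$ is immediate: $v$ is an integer point, so the constant map $[0,1]^{n}\to\{v\}$ is a $\Zed$-retraction. For the step I pass from $A=|K_{j+1}|$ to $B=|K_j|=A\cup T$, where $(T,F)$ is the free pair deleted by $K_j\searrow K_{j+1}$; thus $T$ is a regular simplex with apex $a$ opposite the facet $F$, and $A\cap T=\partial T\setminus{\rm relint}(F)$ is precisely the union of the facets of $T$ through $a$. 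Given a $\Zed$-retraction $\sigma_A\colon[0,1]^{n}\to A$, the task is to manufacture a $\Zed$-retraction $\sigma_B\colon[0,1]^{n}\to B$.

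The local ingredient is an elementary $\Zed$-retraction of the regular simplex $T$ onto $A\cap T$: a $\Zed$-map fixing the facets through $a$ and folding $F$ onto them. Its existence is dictated by the denominator-divisibility criterion of Lemma~\ref{Lem-LinearMap} and the vertex-extension principle of Corollary~\ref{Cor_ExtensionToZed}, and this is exactly where strong regularity of $T$ (the gcd of its vertex denominators being $1$, cf.\ Corollary~\ref{Cor_SRandCoprvectors}) is indispensable, since it is what lets the integer-affine pieces close up with matching denominators. Gluing this local map to $\sigma_A$ — fullness of $\Delta_P$ in $\Delta_Q$ controlling how $T$ abuts the remaining simplices — and extending once more by Corollary~\ref{Cor_ExtensionToZed} over the simplexes of $\Delta_Q$ outside $P$, I obtain $\sigma_B$ with image $B$ and $\sigma_B\restrict{B}=Id_{B}$. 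Iterating the induction yields a $\Zed$-retraction of $[0,1]^{n}$ onto $|K_0|=P$.

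I expect the main obstacle to be the integrality bookkeeping in the elementary step: the naive geometric collapse of $T$ (radial projection away from $F$) is not integer-affine, so the retraction must be assembled vertex-by-vertex through Lemma~\ref{Lem-LinearMap}, and one must verify that the pieces patch into a single globally defined, genuinely idempotent $\Zed$-map on $[0,1]^{n}$. Strong regularity is the hypothesis that keeps denominators compatible at every stage, while the cube vertex of~(ii) anchors the base case at an integer point; a secondary technical point to be checked carefully is the preservation of collapsibility under the blow-ups used to pass from the given $\nabla$ to the regular, full subcomplex $\Delta_P$.
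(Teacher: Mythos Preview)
This theorem is not proved in the present paper; it is quoted from \cite[Theorem~6.1]{CM20XX} and used as a black box, so there is no in-paper proof to compare your outline against. That said, your strategy has a genuine gap at the ``local ingredient'' step, and strong regularity does not repair it.

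The problem is that the elementary $\Zed$-retraction of a regular simplex $T$ onto $A\cap T=\partial T\setminus{\rm relint}(F)$ need not exist, even when $T$ is strongly regular. Already for a $1$-simplex $T=\conv(v_0,v_1)$ with free face $F=\{v_1\}$, the only retraction onto $\{v_0\}$ is the constant map, and that is a $\Zed$-map iff $v_0\in\Zed^{n}$; the condition $\gcd(\den v_0,\den v_1)=1$ plays no role. Concretely, take the strongly regular path $\{0\}\cup[0,\tfrac12]\cup[\tfrac12,\tfrac23]\subseteq[0,1]$, which contains the cube vertex $0$. Any collapse of it to $\{0\}$ must first collapse $[\tfrac12,\tfrac23]$ onto $\{\tfrac12\}$, and there is no $\Zed$-retraction of $[\tfrac12,\tfrac23]$ onto $\{\tfrac12\}$. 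Yet $[0,\tfrac23]$ \emph{is} a $\Zed$-retract of $[0,1]$ (use $x\mapsto x$ on $[0,\tfrac23]$ and $x\mapsto 2-2x$ on $[\tfrac23,1]$): the global retraction exists while your local step is impossible. A related issue is that your induction runs in the anti-collapse direction: even if the local fold $r\colon T\to A\cap T$ existed, it would let you pass from a $\Zed$-retraction onto the larger $B=A\cup T$ to one onto $A$, not the reverse. The sentence ``Gluing this local map to $\sigma_A$\ldots'' is exactly where the missing idea lives. A correct argument (as in \cite{CM20XX}) must instead locate, at each anti-collapse $A\rightsquigarrow B$, simplices of $\Delta_Q$ lying \emph{outside} $P$ that can be $\Zed$-folded onto the newly attached $T$; this is where the interplay of strong regularity with the ambient triangulation of the cube genuinely enters, and it is substantially more delicate than a simplex-by-simplex fold inside $P$.
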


The following result states
  that \Zed-retracts are preserved under pushouts of strict \Zed-maps.

\begin{theorem}\label{Thm_AmalProj}
  Let $P\subseteq [0,1]^{n}$, $Q\subseteq  [0,1]^{m}$ and $R\subseteq
[0,1]^{k}$ be
   \Zed-retracts and
   $\eta\colon P\rightarrow Q$ and $\mu\colon P\rightarrow D$ be
strict $\Zed$-maps. Then the pushout $Q\coprod_P R$ (whose existence
is ensured by Theorem \ref{Theo-Colimits}) is a \Zed-retract.
\end{theorem}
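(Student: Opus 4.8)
The plan is to build explicit $\Zed$-retractions of the cubes containing $P$, $Q$, $R$ and combine them through the pushout's universal property into a retraction of an ambient cube onto $Q\coprod_P R$. Since $P$, $Q$, $R$ are $\Zed$-retracts, there are $\Zed$-retractions $\sigma_P\colon[0,1]^n\to P$, $\sigma_Q\colon[0,1]^m\to Q$, $\sigma_R\colon[0,1]^k\to R$, together with the inclusions $\kappa_P\colon P\hookrightarrow[0,1]^n$, etc. The obstacle is that the pushout $Q\coprod_P R$ is realized (via Theorem~\ref{Theo-Colimits}) as a polyhedron $\Pol(\mathfrak W)$ living in some new cube $[0,1]^N$, and the retractions of the three separate cubes do not automatically patch together over that realization, because the gluing only identifies points along the common image $\rho_P(P)$.

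The first step is to record the maps coming from Theorem~\ref{Theo-Colimits}: the strict $\Zed$-maps $\rho_Q\colon Q\to \Pol(\mathfrak W)$ and $\rho_R\colon R\to\Pol(\mathfrak W)$ with $\rho_Q\circ\eta=\rho_R\circ\mu=\rho_P$, and the amalgamation identity~(\ref{Eq_Amalgam}), namely $\rho_Q(Q)\cap\rho_R(R)=\rho_P(P)$. Because $\rho_Q$ and $\rho_R$ are strict $\Zed$-maps, they are $\Zed$-homeomorphisms onto their images, so their inverses $\rho_Q^{-1}$ and $\rho_R^{-1}$ (defined on $\rho_Q(Q)$, resp. $\rho_R(R)$) are $\Zed$-maps by Corollary~\ref{Cor-PreserDen}. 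I would then exhibit $\Pol(\mathfrak W)$ itself as a $\Zed$-retract by a two-stage argument: first retract an ambient cube onto $\Pol(\mathfrak W)$, then verify the retraction is a $\Zed$-map.

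The key device for the second stage is Theorem~\ref{Thm_collapsible}: I would show that $\Pol(\mathfrak W)$, with a suitable regular triangulation $\Lambda=\rho_Q(\Lambda_Q)\cup\rho_R(\Lambda_R)$ (as constructed in the proof of Theorem~\ref{Theo-Colimits}), satisfies its three hypotheses. Strong regularity of $\Pol(\mathfrak W)$ should follow from Theorem~\ref{Theo_StrongPreserved} applied to the onto strict $\Zed$-maps $\rho_Q$, $\rho_R$: since $Q$ and $R$ are $\Zed$-retracts they are strongly regular, and $\Pol(\mathfrak W)=\rho_Q(Q)\cup\rho_R(R)$ is covered by images of strongly regular polyhedra, so each maximal simplex of $\Lambda$ lies in $\rho_Q(Q)$ or $\rho_R(R)$ and hence is strongly regular. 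The presence of a cube-vertex in $\Pol(\mathfrak W)$ can be arranged by a $\Zed$-homeomorphic normalization, translating a denominator-$1$ point of $P$ (which exists, since $P$ is a nonempty $\Zed$-retract and is therefore strongly regular with a vertex of its cube) to a vertex of the ambient cube. The collapsibility hypothesis is the delicate point, and this is where I expect the main obstacle: I would prove that $\Pol(\mathfrak W)$ is collapsible by gluing collapses of $Q$ and $R$ along the collapse of their common part $P$, using that $Q$, $R$, $P$ are all $\Zed$-retracts hence (by Theorem~\ref{Thm_collapsible}'s converse direction, or by the retract structure together with the fact that $\Zed$-retracts of cubes are collapsible) collapsible, and that collapsibility of two complexes sharing a collapsible full subcomplex yields collapsibility of the union.

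The remaining technical worry — and the genuine hard part — is that collapsibility is not obviously preserved under the amalgamation: one must collapse $\rho_R(R)$ down onto $\rho_P(P)$ first (relative to the shared subcomplex), then continue collapsing $\rho_Q(Q)$, and this requires that $\rho_P(P)$ sits as a \emph{full} subcomplex of both $\rho_Q(\Lambda_Q)$ and $\rho_R(\Lambda_R)$, which is exactly what the full-subcomplex conditions of Lemma~\ref{Lem_TrianofVstrictmap} were arranged to guarantee. Once $\Pol(\mathfrak W)$ is shown to be a $\Zed$-retract via Theorem~\ref{Thm_collapsible}, the conclusion is immediate: $Q\coprod_P R\cong\Pol(\mathfrak W)$ is a $\Zed$-retract, which by the correspondence preceding the theorem means $\McN(Q\coprod_P R)$ is finitely generated projective, i.e.\ the pushout is a $\Zed$-retract as claimed.
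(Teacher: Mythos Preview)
Your proposal has a genuine gap at the collapsibility step. You assert that $P$, $Q$, $R$ are collapsible ``by Theorem~\ref{Thm_collapsible}'s converse direction, or by the retract structure together with the fact that $\Zed$-retracts of cubes are collapsible''. But Theorem~\ref{Thm_collapsible} is a one-way implication: collapsibility (plus the other two hypotheses) implies $\Zed$-retract. No converse is stated or proved anywhere in the paper, and whether every $\Zed$-retract admits a collapsible triangulation is a genuinely delicate question---$\Zed$-retracts are certainly contractible, but contractible polyhedra such as the dunce hat or Bing's house with two rooms are famously not collapsible. So you are not entitled to assume collapsibility of $P$, $Q$, $R$, and without it your whole strategy of feeding $\Pol(\mathfrak W)$ into Theorem~\ref{Thm_collapsible} cannot get started. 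Even granting collapsibility of the three pieces, the gluing claim (``collapsibility of two complexes sharing a collapsible full subcomplex yields collapsibility of the union'') is not correct as stated: what you would need is that one piece collapses \emph{onto} the shared subcomplex, which is strictly stronger than both being collapsible with collapsible intersection.

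The paper sidesteps collapsibility entirely and constructs the retraction directly as a three-fold composite $\gamma_3\circ\gamma_2\circ\gamma_1$. Using that $\rho_P(P)$, $\rho_Q(Q)$, $\rho_R(R)$ are themselves $\Zed$-retracts of the ambient cube $[0,1]^{r+s+t}$ (by \cite[Lemma 4.2]{CM20XX}), with retractions $\gamma_P$, $\gamma_Q$, $\gamma_R$, the argument runs as follows. First, $\gamma_1$ retracts $[0,1]^{r+s+t}$ onto $A\cup B$ where $A=[0,1]^{r+s}\times\{0\}^t$ and $B=[0,1]^r\times\{0\}^s\times[0,1]^t$; this union is star-shaped from the origin, so \cite[Theorem~1.4]{CM2009} supplies the retraction. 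Second, $\gamma_2\colon A\cup B\to A\cup B$ is the piecewise-linear map that fixes $Q\coprod_P R$ pointwise and sends every other vertex of a suitable triangulation to its image under $\gamma_P$; the point of this step is to force $\gamma_2(A)\cap\gamma_2(B)=\rho_P(P)$. Third, $\gamma_3$ is defined to be $\gamma_Q$ on $\gamma_2(A)$ and $\gamma_R$ on $\gamma_2(B)$; these agree on the overlap $\rho_P(P)\subseteq\rho_Q(Q)\cap\rho_R(R)$ (where both act as the identity), so $\gamma_3$ is well defined and lands in $Q\coprod_P R$. The construction thus uses the given retractions of $P$, $Q$, $R$ directly, rather than attempting to recover combinatorial collapsibility from them.
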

\begin{proof}

With the notation of the proof of  Theorem \ref{Theo-Colimits}, the
pushout $Q\coprod_P R$ was realized therein as the rational
polyhedron $\Pol(\mathfrak{W})\subseteq[0,1]^{r+s+t}$ of a
certain weighted abstract simplicial complex $\mathfrak{W}$.
The embeddings  of $P$, $Q$, $R$ into $Q\coprod_P R$,
were  denoted $\rho_P$,
$\rho_Q$, and $\rho_R$.
Letting $A=[0,1]^{r+s}\times\{0\}^{t}$ and
$B=[0,1]^{r}\times\{0\}^{s}\times[0,1]^{t}$,
it was shown:

\begin{itemize}
  \item[(i)]
$\rho_Q(Q)\subseteq A$;

\smallskip
  \item[(ii)]
$\rho_R(R)\subseteq A$;

\smallskip
  \item[(iii)] $\rho_Q(Q)\cup\rho_R(R)=Q\coprod_P R$;

  \smallskip
  \item[(iv)] $\rho_Q(Q)\cap\rho_R(R)=\rho_P(P)$.
\end{itemize}

\medskip
\noindent
Since $P$, $Q$, and $R$ are \Zed-retracts, by \cite[Lemma
4.2]{CM20XX} $\rho_P(P)$, $\rho_Q(Q)$, and $\rho_R(R)$ are
\Zed-retracts. We let  $\gamma_P\colon[0,1]^{r+s+t}\rightarrow
\rho_P(P)$, $\gamma_Q\colon[0,1]^{r+s+t}\rightarrow \rho_Q(Q)$  and
$\gamma_R\colon[0,1]^{r+s+t}\rightarrow \rho_R(R)$ denote the
corresponding \Zed-retractions.

\medskip
The \Zed-retraction for $Q\coprod_P R$ will be constructed in three steps.
\medskip

\noindent{\it Step 1:}

If $x\in A\cup B$, then $\conv({\bf 0},x)\subseteq A\cup B$, where
${\bf 0}$ denotes the origin of $\R^{r+s+t}$.  Therefore, by
\cite[Theorem 1.4]{CM2009}, there exists a \Zed-retraction
$\gamma_1\colon[0,1]^{r+s+t}\rightarrow A\cup B$ onto $A\cup B$.
\medskip

\noindent{\it Step 2:}

Combining Proposition \ref{proposition:poly} and Lemma
\ref{Lem_Triang-Subset}, there is a regular triangulation $\Delta$ of
$A\cup B$, such that $\Delta_{Q\coprod_P R}=\{S\in \Delta\mid
S\subseteq Q\coprod_P R\}$ is a full subcomplex of $\Delta$.  By Lemma
\ref{Cor_ExtensionToZed} there exists a unique \Zed-map
$\gamma_2\colon A\cup B\rightarrow A\cup B$ satisfying
$$
   \gamma_2(v)=\left\{\begin{tabular}{ll}
      $v$& if $v\in Q\coprod_P R$,\\
      $\gamma_P(v)$& otherwise,
   \end{tabular}\right.
$$
with $\gamma_2 $ linear over each simplex $S\in \Delta$.
By construction,  $\gamma_2$ has the following properties:
\begin{itemize}
  \item[(a)] if $v\in  Q\coprod_P R$, then $\gamma_2(v)=v$;
  \item[(b)] if $\gamma_2(A)\cap \gamma_2(B)=\rho_P(P)$
\end{itemize}
\medskip

\noindent{\it Step 3:}

By (a), we have $Q\coprod_P R\subseteq \gamma_2(A\cup B)$. Again by Proposition
\ref{proposition:poly} and Lemma \ref{Lem_Triang-Subset}, there is a
regular triangulation $\Lambda$ of $\gamma_2(A\cup B)$, such that
$\Lambda_{Q\coprod_P R}=\{S\in \Lambda\mid S\subseteq Q\coprod_P R\}$
is a full subcomplex of $\Lambda$. Let $\gamma_3\colon \gamma_2(A\cup
B)\rightarrow Q\coprod_P R$ be defined by:
$$
   \gamma_3(v)=\left\{\begin{tabular}{ll}
      $\gamma_Q(v)$& if $v\in \gamma_2(A)$,\\
      $\gamma_R(v)$& if $v\in \gamma_2(B)$.
   \end{tabular}\right.
$$

By (b) and (iv), if $v\in \gamma_2(A)\cap\gamma_2(B)$,  $v\in
\rho_P(P)=\rho_Q(Q)\cap\rho_R(R)$, which implies that $
\gamma_Q(v)=\gamma_R(v)=v$.
This shows that  $\gamma_3$ is well defined.

\bigskip

We {\it claim} that the map
$\gamma=\gamma_3\circ\gamma_2\circ\gamma_1\colon[0,1]^{r+s+t}\rightarrow
Q\coprod_P R$ is a \Zed-retraction.
If $v\in Q\coprod_P R$, by definition of $\gamma_1$, $\gamma_2$ and
$\gamma_3$ it follows that
$\gamma_1(v)=\gamma_2(v)=\gamma_3(v)=v$.
If $v\notin Q\coprod_P R$, then $\gamma_1(v)\in A\cup B$. Assume
first   $\gamma_1(v)\in A$. Then $\gamma_2\circ\gamma_1(v)\in\gamma_2(A)$. Further,
$\gamma(v)=\gamma_3(\gamma_2\circ\gamma_1(v))=\gamma_Q(v)\in\rho_Q(Q)\subseteq
Q\coprod_P R$. Similarly, if $\gamma_1(v)\in B$ then
$\gamma(v)\in Q\coprod_P R$. Therefore, $\gamma[0,1]^{r+s+t}\rightarrow Q\coprod_P R$ is a
\Zed-retraction onto $Q\coprod_P R$, as claimed.

\medskip
The proof is complete
  \end{proof}

Combining  Corollary \ref{corollary:scazonte}
with the foregoing theorem
we get:

\begin{corollary}\label{Cor_FiberProj}
    Let $(G_1,u_1)$, $(G_2,u_2)$,  $(G_3,u_3)$ be finitely
generated projective unital $\ell$-groups
     and  $f\colon G_1\rightarrow G_3$ and $g\colon G_2\rightarrow
G_3$ onto homomorphisms.
  Then the fiber product $G=\{(a,b)\in G_1\times G_2\mid f(a)=g(b)\}$
  (with $(u_1,u_2)$ as the distinguished order unit)
   is a finitely generated projective unital $\ell$-group.
\end{corollary}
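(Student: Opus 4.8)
The plan is to deduce Corollary~\ref{Cor_FiberProj} by combining the already-established Corollary~\ref{corollary:scazonte} with Theorem~\ref{Thm_AmalProj}, translating everything through the duality of Theorem~\ref{Theo_Baker-Beynon} and the correspondence between onto homomorphisms and strict $\Zed$-maps of Theorem~\ref{Theo_StricOnto}. First I would invoke Corollary~\ref{corollary:scazonte} to conclude that the fiber product $G=\{(a,b)\in G_1\times G_2\mid f(a)=g(b)\}$ is at least finitely presented; this is immediate since finitely generated projective unital $\ell$-groups are in particular finitely presented, by the inclusions in~(\ref{Eq_Inclusions}). It then remains only to upgrade ``finitely presented'' to ``finitely generated projective''.

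Next I would set up the dual picture. By Theorem~\ref{Theo_Baker-Beynon} write $(G_i,u_i)\cong\McN(P_i)$ for suitable rational polyhedra $P_1,P_2,P_3$, and since each $(G_i,u_i)$ is finitely generated projective, each $P_i$ may be taken to be a $\Zed$-retract (of some $[0,1]^{n_i}$). The onto homomorphisms $f\colon G_1\to G_3$ and $g\colon G_2\to G_3$ dualize, via Theorem~\ref{Theo_StricOnto}, to strict $\Zed$-maps $\eta\colon P_3\to P_1$ and $\mu\colon P_3\to P_2$ (note the reversal of arrows under the contravariant functor $\McN$). As observed in the proof of Corollary~\ref{corollary:scazonte}, the fiber product $G$ corresponds under $\McN$ to the pushout $P_1\coprod_{P_3}P_2$ of these two strict $\Zed$-maps, whose existence is guaranteed by Theorem~\ref{Theo-Colimits}; concretely, $\McN(P_1\coprod_{P_3}P_2)$ is isomorphic to the fiber product $\{(h,k)\in\McN(P_1)\times\McN(P_2)\mid h\circ\eta=k\circ\mu\}\cong G$.

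I would then apply Theorem~\ref{Thm_AmalProj} directly: since $P_1$, $P_2$, $P_3$ are $\Zed$-retracts and $\eta,\mu$ are strict $\Zed$-maps, the pushout $P_1\coprod_{P_3}P_2$ is again a $\Zed$-retract. Finally, by the theorem characterizing finitely generated projective unital $\ell$-groups as exactly those of the form $\McN(P)$ with $P$ a $\Zed$-retract (the unlabeled theorem stated just before the notion of free face), $G\cong\McN(P_1\coprod_{P_3}P_2)$ is finitely generated projective, which is the desired conclusion.

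The only genuine subtlety, and hence the step I would treat most carefully, is the passage from the hypotheses on the homomorphisms to the hypotheses of Theorem~\ref{Thm_AmalProj}: one must check that \emph{onto} homomorphisms dualize precisely to \emph{strict} $\Zed$-maps, which is exactly the content of Theorem~\ref{Theo_StricOnto}, and that the direction of the dualized arrows matches the input of Theorem~\ref{Thm_AmalProj}. Everything else is a formal chaining of the duality with the two preceding results, so no new geometry is required and the argument is short.
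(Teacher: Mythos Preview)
Your proposal is correct and follows exactly the route the paper intends: the paper's own proof of Corollary~\ref{Cor_FiberProj} is the single sentence ``Combining Corollary~\ref{corollary:scazonte} with the foregoing theorem we get'', where the foregoing theorem is Theorem~\ref{Thm_AmalProj}. You have simply unpacked that combination in full detail, including the dualization via Theorems~\ref{Theo_Baker-Beynon} and~\ref{Theo_StricOnto} and the characterization of finitely generated projectives as $\McN$ of a $\Zed$-retract; nothing is missing or different.
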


\subsection{Geometric realization of exact unital $\ell$-groups}

The rest of the section is devoted  to proving
\begin{theorem}\label{Theo_WeakProj}
   A unital $\ell$-group $(G,u)$ is exact iff there exists
a polyhedron $P\subseteq\R^{n}$ satisfying the following conditions:
   \begin{enumerate}
     \item $(G,u)\cong\McN(P)$;
     \item $P$ is connected;
     \item $P\cap\Zed^{n}\neq\emptyset$;
     \item $P$ is strongly regular.
   \end{enumerate}
\end{theorem}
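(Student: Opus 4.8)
The plan is to translate exactness into a purely geometric statement and then prove the two implications separately. By the definition of exactness together with Lemma~\ref{Lem_ImagesZmorph0} and the duality of Theorem~\ref{Theo_Baker-Beynon}, $(G,u)$ is exact precisely when $(G,u)\cong\McN(P)$ for some rational polyhedron $P$ that is the image of a cube under an onto $\Zed$-map: indeed, a one-one unital $\ell$-homomorphism $\McN(P)\to\McN([0,1]^{N})$ corresponds under the dual equivalence to a $\Zed$-map $\eta\colon[0,1]^{N}\to P$ which, being dual to a monic arrow, is epic, hence onto by Theorem~\ref{Theo_monicepi}. Thus the whole theorem reduces to showing that a polyhedron $P$ is an onto $\Zed$-image of a cube if and only if it is connected, meets $\Zed^{n}$, and is strongly regular.

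For the forward implication I would assume such an onto $\Zed$-map $\eta\colon[0,1]^{N}\to P$ exists, so that (1) holds by construction. Condition (2) is immediate, since $[0,1]^{N}$ is connected and $\eta$ is continuous. For (3), the origin $\mathbf 0\in[0,1]^{N}$ has $\den(\mathbf 0)=1$, so Corollary~\ref{Cor_Div_Denominators} forces $\den(\eta(\mathbf 0))=1$, i.e.\ $\eta(\mathbf 0)\in P\cap\Zed^{n}$. Finally, $[0,1]^{N}$ is strongly regular by Example~\ref{Ex_CubeStronglyTriang}, and strong regularity is transported along onto $\Zed$-maps by Theorem~\ref{Theo_StrongPreserved}, giving (4).

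The reverse implication is the substantial one. Starting from $P$ satisfying (1)--(4), I would fix an integer point $z\in P\cap\Zed^{n}$ and choose a strongly regular triangulation $\Delta$ of $P$ having $z$ as a vertex (possible by subdivision and desingularization as in Lemma~\ref{Lem_Triang-Subset}, strong regularity being triangulation-independent by Corollary~\ref{Cor-BlowupPreserGCD}). Using connectedness of $P$, order the maximal simplices $S_{1},\dots,S_{t}$ of $\Delta$ along a spanning tree of their adjacency graph, so that each $S_{i}$ meets $S_{1}\cup\cdots\cup S_{i-1}$ in a common facet. At the level of weighted abstract simplicial complexes I would then build a \emph{tree unfolding} $\mathfrak W_{C}$: take one disjoint copy of each $S_{i}$, carrying the denominators of $\Delta$ as weights, and re-glue two copies along a facet exactly when the corresponding tree edge prescribes it, tearing apart all non-tree adjacencies. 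Attaching a simplex along a single facet is an elementary expansion, so the tree structure makes $\mathfrak W_{C}$ collapsible; its geometric realization $\Pol(\mathfrak W_{C})\subseteq[0,1]^{M}$ is therefore a polyhedron with a collapsible, strongly regular triangulation (each maximal simplex being a weight-preserving copy of a strongly regular $S_{i}$), and it contains a standard basis vector, which is a vertex of $[0,1]^{M}$, arising from the weight-$1$ copy of $z$. By Theorem~\ref{Thm_collapsible}, $\Pol(\mathfrak W_{C})$ is a $\Zed$-retract of $[0,1]^{M}$, so there is an onto $\Zed$-map $r\colon[0,1]^{M}\to\Pol(\mathfrak W_{C})$. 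The folding simplicial map $\gamma\colon\mathfrak W_{C}\to\mathfrak W(\Delta)$ sending each copy back to its original is weight-preserving, hence a morphism in $\wasc$; on each simplex $\Pol(\gamma)$ restricts to a $\Zed$-homeomorphism onto the corresponding simplex of $\Delta$ (Corollary~\ref{Cor-PreserDen}), so $\Pol(\gamma)$ maps $\Pol(\mathfrak W_{C})$ onto $\Pol(\mathfrak W(\Delta))$. Composing $r$, $\Pol(\gamma)$ and the inverse of the $\Zed$-homeomorphism $\iota_{\Delta}$ of \eqref{Eq:IsoinCube} yields an onto $\Zed$-map $[0,1]^{M}\to P$. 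Dualizing through Theorems~\ref{Theo_Baker-Beynon} and~\ref{Theo_monicepi} gives a one-one embedding of $\McN(P)\cong(G,u)$ into the free group $\McN([0,1]^{M})$, so $(G,u)$ is exact.

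The main obstacle is the construction and verification of the tree unfolding $\mathfrak W_{C}$: one must check that tearing along non-tree adjacencies produces a genuine (collapsible) weighted complex, that the folding $\gamma$ is simplicial and weight-preserving so that $\Pol(\gamma)$ is onto and strict on each piece, and that collapsibility survives the passage through $\Pol$ into the hypotheses of Theorem~\ref{Thm_collapsible}. Everything else --- the duality bookkeeping and the entire forward implication --- is routine given the results already established.
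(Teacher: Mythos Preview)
Your reduction of exactness to the geometric statement ``$P$ is an onto $\Zed$-image of a cube'', together with the entire forward implication, is correct and coincides with what the paper does (packaged there as Lemmas~\ref{Lem_CarWeakProj} and~\ref{Lem_CarZimages}). The substantive gap is in the reverse construction.

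The facet-adjacency graph of the maximal simplices of $\Delta$ need \emph{not} be connected, even when $|\Delta|$ is. Take $P=\conv\bigl((0,0),(1,0),(0,1)\bigr)\cup\conv\bigl((0,0),(-1,0),(0,-1)\bigr)$: this $P$ is connected, meets $\Zed^{2}$, and is strongly regular, but its two maximal $2$-simplices share only a vertex, which is not a facet of either. Hence there is no way to order the maximal simplices ``so that each $S_i$ meets $S_1\cup\cdots\cup S_{i-1}$ in a common facet'', and your tree unfolding $\mathfrak{W}_C$ cannot be built as stated. Any polyhedron with maximal simplices of varying dimension, or with a pinch point, breaks the dual-graph approach in the same way. (A salvage is possible: use instead the graph in which two maximal simplices are adjacent whenever they share \emph{any} nonempty face---this graph is connected because $P$ is---and glue disjoint copies along the maximal common face prescribed by each tree edge; a simplex collapses to any of its faces, so the resulting tree of simplices is still collapsible. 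But this is not what you wrote, and the verification that nothing unwanted gets re-identified requires care.)

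The paper avoids the issue altogether by taking a spanning tree $\mathcal{T}$ of the $1$-skeleton of $\Delta$, which is always connected, and assembling a different unfolding: for every vertex $v_i$ a disjoint copy $\mathcal{F}_i$ of the closed star of $v_i$, and for every tree edge $\conv(v_i,v_j)\in\mathcal{T}$ a ``bridge'' $\mathcal{B}_{i,j}$ isomorphic to a fixed maximal simplex containing that edge. These pieces are glued only at the central vertices $i$ and $j$, so the complex collapses first (star by star, bridge by bridge) to a copy of $\mathcal{T}$ and then to a point; strong regularity and the integer vertex are read off from the weights. The folding map is onto because every simplex of $\Delta$ sits in the star of one of its vertices. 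Your high-level plan---unfold to something collapsible, retract a cube onto it via Theorem~\ref{Thm_collapsible}, then fold back---is exactly the paper's; what fails is the particular unfolding you chose.
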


For the proof we prepare:

\begin{lemma}\label{Lem_CarWeakProj}
   A unital $\ell$-group $(G,u)$ is exact iff there exist integers
    $m,n\geq 0$, and a \Zed-map $\eta\colon [0,1]^{n}\rightarrow
\R^{m}$ such that $(G,u)\cong \McN(\eta([0,1]^{n}))$.
\end{lemma}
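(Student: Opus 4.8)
The plan is to derive both implications directly from the duality of Theorem~\ref{Theo_Baker-Beynon} together with the characterization of epic \Zed-maps in Theorem~\ref{Theo_monicepi}(ii). The guiding translation is that, under the contravariant functor $\McN$, a one-one embedding of $(G,u)$ into the free object $\McN([0,1]^{n})$ corresponds to an \emph{onto} \Zed-map from $[0,1]^{n}$ onto a rational polyhedron whose $\McN$-image is $(G,u)$. Once this dictionary is set up, each direction is short.

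For the $(\Leftarrow)$ direction I would put $P=\eta([0,1]^{n})$. By Lemma~\ref{Lem_ImagesZmorph0}, $P$ is a (rational) polyhedron, so $\McN(P)$ is an object of $\QP$'s dual $\FP$ and is therefore finitely presented. Since $\eta\colon[0,1]^{n}\to P$ is onto, the homomorphism $\McN(\eta)\colon\McN(P)\to\McN([0,1]^{n})$, $f\mapsto f\circ\eta$, is one-one: if $f\circ\eta=f'\circ\eta$ then $f$ and $f'$ agree on $\eta([0,1]^{n})=P$, so $f=f'$. Hence $\McN(P)$, and with it $(G,u)\cong\McN(P)$, is finitely presented and admits a one-one unital $\ell$-homomorphism into the free unital $\ell$-group $\McN([0,1]^{n})$; by definition $(G,u)$ is exact.

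For the $(\Rightarrow)$ direction I would start from the hypothesis that $(G,u)$ is exact, so it is finitely presented and carries a one-one homomorphism $g\colon(G,u)\to\McN([0,1]^{n})$ for some $n$. Since $(G,u)$ is finitely presented, the duality of Theorem~\ref{Theo_Baker-Beynon} gives $(G,u)\cong\McN(Q)$ for some rational polyhedron $Q\subseteq\R^{m}$, and fullness of $\McN$ lets me write the monic arrow $g$ as $g=\McN(\eta)$ for a unique \Zed-map $\eta\colon[0,1]^{n}\to Q$. As $g$ is one-one it is monic in $\FP$; because $\McN$ is a categorical duality it reflects monics to epics, so $\eta$ is epic in $\QP$, and by Theorem~\ref{Theo_monicepi}(ii) this means $\eta$ is onto $Q$. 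Thus $\eta([0,1]^{n})=Q$, and composing $\eta$ with the inclusion $Q\hookrightarrow\R^{m}$ (an inclusion of polyhedra is a \Zed-map) yields the required \Zed-map $\eta\colon[0,1]^{n}\to\R^{m}$ with $(G,u)\cong\McN(Q)=\McN(\eta([0,1]^{n}))$.

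The only delicate point is the bookkeeping of directions forced by the contravariance of $\McN$: one must consistently match ``one-one unital $\ell$-homomorphism'' with ``monic in $\FP$'', and ``monic in $\FP$'' with ``epic, i.e.\ onto, \Zed-map'' via Theorem~\ref{Theo_monicepi}(ii), while invoking fullness of the duality to realize the abstractly given embedding $g$ as $\McN(\eta)$ for an honest \Zed-map $\eta$. Apart from this, both implications are immediate consequences of the results already established.
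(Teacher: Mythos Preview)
Your argument is correct. The $(\Leftarrow)$ direction is essentially the paper's, just streamlined: where the paper first moves $\eta([0,1]^n)$ into a cube and then factors $\eta$ as a strict map composed with an onto map (invoking Theorems~\ref{Theo_monicepi} and~\ref{Theo_StricOnto}), you observe directly that surjectivity of $\eta$ onto $P$ makes $f\mapsto f\circ\eta$ injective.

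The $(\Rightarrow)$ direction is where you genuinely diverge. The paper takes the presentation map $f\colon\McN([0,1]^m)\twoheadrightarrow(G,u)$ together with the embedding $g$, composes them, dualizes the composite $g\circ f$ to a $\Zed$-map $\eta\colon[0,1]^n\to[0,1]^m$, and then invokes Lemma~\ref{Lem_ImagesZmorph} to identify $\McN(\eta([0,1]^n))$ with $\McN([0,1]^m)/\ker(g\circ f)=\McN([0,1]^m)/\ker f\cong(G,u)$. You instead use essential surjectivity of the duality to write $(G,u)\cong\McN(Q)$, fullness to write $g=\McN(\eta)$, and then the chain ``one-one $\Rightarrow$ monic in $\FP$ $\Rightarrow$ epic in $\QP$ $\Rightarrow$ onto'' via Theorem~\ref{Theo_monicepi}(ii). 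Your route is cleaner and avoids the kernel computation and the image lemma; the paper's route is more explicit about where the polyhedron $\eta([0,1]^n)$ actually sits (inside the cube $[0,1]^m$ coming from the presentation), which is convenient for the subsequent Lemma~\ref{Lem_CarZimages}. Both are valid.
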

\begin{proof}

$(\Rightarrow)$
For some $m,n\in\{1,2,\ldots\}$ there
  exist unital $\ell$-ho\-mo\-mor\-phisms
   $$f\colon\McN([0,1]^{m})\rightarrow (G,u)
   \,\,\,\mbox{and}\,\,\,
   g\colon(G,u)\rightarrow \McN([0,1]^{n})$$
with
  $f$  onto $(G,u)$ and $g$  one-one.
   Then $$(G,u)\cong\McN([0,1]^{m})/{\rm ker}f= \McN([0,1]^{m})/{\rm
   ker}(f\circ g).$$
   Theorem \ref{Theo_Baker-Beynon} yields
   a \Zed-map $\eta\colon[0,1]^{n}\rightarrow [0,1]^{m}$ such that
$g\circ f=\McN(\eta)$.
   Therefore, $h\in{\rm ker}(f\circ g)$ iff $h\circ\eta=0$ iff
$h(\eta([0,1]^{n}))=\{0\}$,
   whence $\McN([0,1]^{m})/{\rm ker}(f\circ g)=\McN([0,1]^{m})/{\rm
ker}(\McN(\eta))=\McN(\eta([0,1]^{n}))$.
\smallskip

$(\Leftarrow)$
As  observed in Section 2,
(see  \eqref{Eq:IsoinCube} in particular)
every polyhedron is \Zed-homeomorphic to a
polyhedron contained in some unit cube $[0,1]^{m}$.
Thus without loss of generality we can
assume $\eta([0,1]^{n})\subseteq [0,1]^{m}$.  Let $\eta\colon
[0,1]^{n}\rightarrow [0,1]^{m}$ be a \Zed-map such that $(G,u)\cong
\McN(\eta([0,1]^{n}))$.  Let further
$\mu\colon\eta([0,1]^{n})\rightarrow [0,1]^{m}$ and $\nu\colon
[0,1]^{n}\rightarrow \eta([0,1]^{n}) $ respectively be a strict and an
onto \Zed-map such that $\eta=\mu\circ\nu$.
By Theorems \ref{Theo_monicepi} and \ref{Theo_StricOnto},
$\McN(\mu)\colon \McN([0,1]^{m})\rightarrow \McN(\eta([0,1]^{n})) $ is
an onto unital $\ell$-homomorphism and $\McN(\nu)\colon
\McN(\eta([0,1]^{n}))\rightarrow \McN([0,1]^{n})$ is a one-one unital
$\ell$-homomorphism.
   Since $\McN(\eta([0,1]^{n}))\cong(G,u)$,  $(G,u)$ is finitely
generated and is
    isomorphic to a subalgebra of the free unital $\ell$-group
$\McN([0,1]^{n})$.
\end{proof}

\begin{lemma}\label{Lem_CarZimages}
   Let $P\subseteq\R^{n}$ be a polyhedron.
   Then for some  $l=1,2,\ldots$ there is a  \Zed-map $\eta$
   of $[0,1]^{l}$ onto $P$
  iff $P$ satisfies the following three conditions:
   \begin{enumerate}
     \item $P$ is connected;
     \item $P\cap\Zed^{n}\neq\emptyset$;
     \item $P$ is strongly regular.
   \end{enumerate}
\end{lemma}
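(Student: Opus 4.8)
The plan is to treat the two implications separately, the forward one being short and the reverse one carrying the geometric content.

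For the forward implication, assume $\eta\colon[0,1]^{l}\to P$ is a \Zed-map onto $P$. Condition (i) is immediate: a \Zed-map is continuous and piecewise linear, so $P=\eta([0,1]^{l})$ is the continuous image of a connected set, hence connected. For (ii), the origin $\mathbf 0$ of $[0,1]^{l}$ has $\den(\mathbf 0)=1$, so by Corollary \ref{Cor_Div_Denominators} $\den(\eta(\mathbf 0))$ divides $1$; thus $\eta(\mathbf 0)\in P\cap\Zed^{n}$. Finally (iii) follows at once from Theorem \ref{Theo_StrongPreserved}: the cube $[0,1]^{l}$ is strongly regular by Example \ref{Ex_CubeStronglyTriang}, and strong regularity is inherited by the range of an onto \Zed-map.

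For the reverse implication I would build the surjection by induction along a triangulation. Using the \Zed-homeomorphism \eqref{Eq:IsoinCube} I may assume $P\subseteq[0,1]^{n}$, and by a Farey blow-up (which preserves regularity and, by Corollary \ref{Cor-BlowupPreserGCD}, strong regularity) I fix a regular triangulation $\Delta$ of $P$ having the integer point $w\in P\cap\Zed^{n}$ as a vertex. Since $P$ is connected I order the maximal simplices $S_{1},\dots,S_{r}$ of $\Delta$ so that $w\in S_{1}$ and each $S_{i}$ ($i\ge 2$) meets $P_{i-1}=S_{1}\cup\cdots\cup S_{i-1}$ in a nonempty union of faces. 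I then construct \Zed-maps $\eta_{i}$ from cubes onto $P_{i}$, so that $\eta_{r}$ is the desired map onto $P=P_{r}$. The base case is clean: for a single regular simplex $S_{1}=\conv(w,v_{1},\dots,v_{k})$ with the integer vertex $w$, I choose (in a sufficiently subdivided triangulation of a cube) a regular simplex $\conv(u_{0},\dots,u_{k})$ with $\den(v_{j})\mid\den(u_{j})$, send $u_{0}\mapsto w$, $u_{j}\mapsto v_{j}$ and every remaining cube vertex to $w$; since $w$ is integral its denominator divides all others, so Corollary \ref{Cor_ExtensionToZed} and Lemma \ref{Lem-LinearMap} yield a \Zed-map whose image is exactly $S_{1}$.

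The inductive step is the heart of the matter, and the main obstacle. To pass from $\eta_{i-1}$ to a map onto $P_{i}=P_{i-1}\cup S_{i}$, I would enlarge and subdivide the cube so as to accommodate a regular simplex mapping affinely onto the newly attached strongly regular simplex $S_{i}$ (its vertices realized by cube vertices of suitably divisible denominator, via Lemma \ref{Lem-LinearMap}), glue this over the nonempty shared face $S_{i}\cap P_{i-1}$ by folding the connecting region back into $P_{i-1}$, and leave $\eta_{i-1}$ untouched elsewhere. The delicate point is that the enlarged cube necessarily carries, arbitrarily close to every preimage point, points of coprime denominator (the cube being strongly regular), and by Corollary \ref{Cor_Div_Denominators} these must map to points of $P$ of correspondingly small denominator; keeping the extended map inside $P$ therefore forces the target to contain, near each of its points, points of every required denominator. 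This is exactly guaranteed by the strong regularity of $P$ through Corollary \ref{Cor_SRandCoprvectors}, and it is precisely this denominator-compatible extension---rather than the mere placement of the simplices---where hypotheses (ii) and (iii) are indispensable. Once the extension is carried out at each stage, $\eta_{r}$ is a \Zed-map of a cube onto $P$, completing the induction.
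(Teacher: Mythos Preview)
Your forward implication is correct and coincides with the paper's argument.

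The reverse implication, however, has a genuine gap at the inductive step. You correctly flag the extension from $\eta_{i-1}$ to a surjection onto $P_{i}=P_{i-1}\cup S_{i}$ as ``the heart of the matter,'' but you do not actually perform it. The phrases ``enlarge and subdivide the cube,'' ``glue this over the nonempty shared face,'' and ``folding the connecting region back into $P_{i-1}$'' do not specify a map: if you pass from $[0,1]^{l}$ to a larger cube you must say exactly how $\eta_{i-1}$ is extended over the new coordinates, where the simplex hitting $S_{i}$ sits relative to the old domain, and why the resulting piecewise-linear map is a \Zed-map with image precisely $P_{i}$. Your closing paragraph explains why strong regularity of $P$ is \emph{necessary} for such an extension (coprime-denominator points in the cube must land on points of small denominator in $P$), but necessity is not a construction. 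Note too that for $i\ge 2$ the simplex $S_{i}$ need not contain any integer point, so by the forward direction of this very lemma there is no onto \Zed-map from a cube to $S_{i}$ alone; the base-case trick of dumping all leftover vertices onto $w$ is therefore unavailable, and the new simplex must be reached \emph{through} $P_{i-1}$. That routing is exactly the construction you have not supplied.

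The paper sidesteps this difficulty by a different strategy: instead of an induction, it builds in one stroke an auxiliary weighted abstract simplicial complex $\mathfrak{W}$ (one copy of the closed star of each vertex of a regular triangulation $\Delta$ of $P$, together with ``bridge'' simplices along a spanning tree of the $1$-skeleton) whose geometric realization $\Pol(\mathfrak{W})$ is collapsible, strongly regular, and contains an integer vertex. Theorem~\ref{Thm_collapsible} then yields a \Zed-retraction of a cube onto $\Pol(\mathfrak{W})$, and an obvious onto \Zed-map $\Pol(\mathfrak{W})\to P$ (folding each star back onto $\Delta$) finishes the argument. The factorization through a collapsible intermediate polyhedron is the idea your direct inductive approach is missing.
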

\begin{proof}
$(\Rightarrow)$  If $\eta\colon [0,1]^{l}\rightarrow P$ is an onto
\Zed-map, then $P$ is connected because $\eta$ is continuous.
   Combining Example \ref{Ex_CubeStronglyTriang} and Theorem
\ref{Theo_StrongPreserved}, it follows that $P$ is strongly regular.
   By Corollary \ref{Cor_Div_Denominators}, $\den(\eta(0,\ldots,0))$
is a divisor of $\den(0,\ldots,0)$, that is,
$\den(\eta(0,\ldots,0))=1$. Then $\eta(0,\ldots,0)\in\Zed^{n}$.

\medskip

  $(\Leftarrow)$
  For some suitable  strongly regular collapsible triangulation $\nabla$,
  we will define onto   \Zed-maps  $\eta_1\colon\cube \rightarrow |\nabla|$,
   and $\eta_2\colon |\nabla|\rightarrow P$ providing the required $\eta.$

   \medskip

   \noindent{\it Construction of $\nabla$:}

   Let $\Delta$ be a regular triangulation of $P$.
  $\nabla$ will be defined as the geometric realization of
   a
   weighted abstract simplicial complex $\mathfrak{W}$ arising
   from $\Delta$.

   Since $P$ is connected, the simple graph $H$ given by the
    $1$-simplexes of $\Delta$ is connected.
As is well known, a  {\em spanning tree}
 of $H$ is a tree
     $\mathcal{T}\subseteq\Delta$ such that
$\ver(\mathcal{T})=\ver(\Delta)=\{v_1,\ldots,v_n\}$.
   By (ii), there is no loss of generality to  assume
\begin{equation}\label{Eq:AsV1}
\den(v_1)=1.
\end{equation}

   \bigskip

\noindent
   {\it Vertices of $\mathfrak{W}$:}
   Let us set
   $$J=\{(i,j)\in\{1,\ldots,n\}^{2}\mid i\neq j \mbox{ and
}\conv(v_i,v_j)\in\Delta\}.$$
   For each $i\neq j$ such that $\conv(v_i,v_j)\in\mathcal{T}$ let
$S_{i,j}$ be a maximal simplex in $\Delta$ such that
$\conv(v_i,v_j)\subseteq S_{i,j}$.
   Let $K=\{(i,j,k)\in\{1,\ldots,n\}^{3}\mid i\neq j, j\neq k,i\neq k
\mbox{ and }\conv(v_i,v_j,v_k)\subseteq S_{i,j}\}$.
   Then
   $$
     V=\{i,\ldots,n\}\cup J\cup K.
   $$

   \bigskip

\noindent
   {\it Simplexes of $\mathfrak{W}$:} For each $i\in\{1,\ldots,n\}$,
   let the set $\mathcal{F}_i\subseteq\mathcal{P}(V)$  be defined by:
   $X\in\mathcal{F}_i$ if there are $j_1,\ldots,j_m\in\{1,\ldots,n\}$
such that $\conv(v_i,v_{j_1},\ldots, v_{j_m})\in\Delta$ and
   $X\subseteq \{i,(i,j_i),\ldots,(i,j_m)\}$.
   For each $i,j\in\{1,\ldots,n\}$ such that $i\neq j$ and
$\conv(v_i,v_j)\in\mathcal{T}$,
   we further define $\mathcal{B}_{i,j}\subseteq\mathcal{P}(V)$ as follows:
   $X\in \mathcal{B}_{i,j}$ if
   $\conv(v_i,v_j,v_{k_1}\ldots, v_{k_m})=S_{i,j}$ and
$X\subseteq\{i,j,(i,j,k_1),\ldots,(i,j,k_m)\}$.
We next let
   $$\Sigma=\bigcup\mathcal{F}_i\cup\bigcup \mathcal{B}_{i,j}$$
   By definition, if $X\subseteq Y$ and $Y\in\Sigma$ then $X\in\Sigma$.
   Moreover, for all  $x\in V$  there exists $X\in\Sigma$ such that $x\in X$.
   Therefore, $\langle V,\Sigma\rangle$ is an abstract simplicial complex.

   \bigskip

\noindent
   {\it Weights:} Finally we define $w\colon V\rightarrow
\{1,2,\ldots\}$ as follows:
   \begin{equation}\label{Eq_DefWeight}
   \begin{tabular}{lcl}
     $w(i)$     & $=$ & $\den(v_i)$,\\
     $w(i,j)$   & $=$ & $\den(v_j)$, \\
     $w(i,j,k)$ & $=$ & $\den(v_k)$.
   \end{tabular}
   \end{equation}
\medskip

\noindent{\it Claim 1:} For every maximal simplex $X$ in $\Sigma$,
  the greatest common divisor of the denominators of the vertices of $X$
     is $1$.

    By definition,
    we either have
     $$X=\{i\}\cup\{(i,j_1),\ldots,(i,j_m)\}
     \,\,\,\mbox{ and $\conv(v_i,v_{j_1},\ldots, v_{j_m})$ is maximal in $\Delta$},$$
or
      $$X=\{i,j,(i,j,k_1),\ldots,(i,j,k_m)\}\,\,\,
      \mbox{and
$\conv(v_i,v_j,v_{k_1}\ldots, v_{k_m})=S_{i,j}$}.$$

    \bigskip
    \noindent
    In either case
    the claim follows by definition of $w$,
    because   $\Delta$ is strongly regular.

    \bigskip

\noindent{\it Claim 2:}   $\mathfrak{W}$ is collapsible.

   By  definition of $\mathfrak{W}$, we have:
   \begin{itemize}
   \item[(a)] $\mathcal{F}_i\cap \mathcal{F}_j=\{\emptyset\}$ whenever
$i\neq j$;
   \item[(b)] $\mathcal{F}_{i}\cap \mathcal{B}_{j,k}=\{\emptyset\}$
whenever $i\neq j$ and $i\neq k$;
   \item[(c)] $\mathcal{F}_{i}\cap
\mathcal{B}_{j,k}=\{\emptyset,\{i\}\}$ whenever $i= j$ or $i=k$;
   \item[(d)] $\mathcal{B}_{i,j}\cap
\mathcal{B}_{k,l}=\{\emptyset,\{i\}\}$ whenever $i=k$ or $i=l$;
   \item[(e)] $\mathcal{B}_{i,j}\cap \mathcal{B}_{k,l}=\{\emptyset\}$
whenever $\{i,j\}\cap\{k,l\}=\emptyset$.
   \end{itemize}

  The claim is now proved  in 3 steps as follows:

\medskip
   {\it Step 1 ($\mathcal{F}_{i}$):}
For each $i\in\{1,\ldots,n\}$,
     $\langle\ver(\mathcal{F}_i),\mathcal{F}_i\rangle$ is
combinatorially isomorphic to
     the closed star of $v_i$ (see \cite[\S III Definition 1.11]{Ew1996}), and
therefore, it is
      a collapsible abstract simplicial complex.
     Then $\langle\ver(\mathcal{F}_i),\mathcal{F}_i\rangle$ collapses
to the vertex $i$.
By (a-e),  $\mathfrak{W}$ collapses to $\langle
V,\bigcup\mathcal{B}_{i,j}\rangle$.

\medskip
   {\it Step 2 ($\mathcal{B}_{i,j}$):}
     For each $i,j\in\{1,\ldots,2\}$
    such that $i\neq j$ and $\conv(v_i,v_j)\subseteq \mathcal{T}$, the
abstract simplicial complex
$(\ver(\mathcal{B}_{i,j}),\mathcal{B}_{i,j})$ is combinatorially
isomorphic to the skeleton of the complex given  by the simplex
$S_{i,j}$ and its faces.
Therefore,
  $\langle \ver(\mathcal{B}_{i,j}),\mathcal{B}_{i,j}\rangle$ can be
collapsed to any of its faces.
  In particular, it can be collapsed to
$\langle\{i,j\},\{\emptyset,\{i\},\{j\},\{i,j\}\}\rangle$.
Using (d) we see that
  $(V,\bigcup\mathcal{B}_{i,j})$ collapses to the abstract simplicial
complex $(V,\Sigma')$ where $X\in\Sigma'$ iff
$X\subseteq\{i,j\}$ and $\conv(v_i,v_j)\subseteq\mathcal{T}$.

\medskip
   {\it Step 3:}
The sequence of collapses defined in Steps
  1 and 2 leads to an abstract simplicial complex $(V,\Sigma')$ which
is combinatorially isomorphic to the skeleton of $\mathcal{T}$.
     Since  $\mathcal{T}$ is a tree, it is collapsible and therefore
$(V,\Sigma')$ is collapsible, too.

\bigskip

Thus
 $\mathfrak{W}$ is collapsible, and  Claim 2  is settled.
   \medskip

   By (\ref{Eq_DefWeight}) and (\ref{Eq:AsV1}), $w(1)=\den(v_1)=1$.
   {}From  Claims 1 and 2 it follows that
    $\Pol(\mathfrak{W})$ satisfies  the hypotheses  of Theorem
\ref{Thm_collapsible}.
   Therefore, for some integer  $l>0$
   there is an onto \Zed-map $\eta\colon
[0,1]^{l}\rightarrow\Pol(\mathfrak{W})$.
   \medskip

   Finally let $f\colon V\rightarrow \ver(\Delta)$ be defined as follows:
   $$
     \begin{tabular}{lcl}
       $f(i)$     & $=$ & $v_i$,\\
       $f(i,j)$   & $=$ & $v_j$, \\
       $f(i,j,k)$ & $=$ & $v_k$.
     \end{tabular}
   $$

   By (\ref{Eq_DefWeight}), $\den(f(x))=w(x)$ for each $x\in V$. By
    definition of $\mathcal{F}_i$ and $\mathcal{B}_{i,j}$,  $\,\,\,f$
is a morphism from $\mathfrak{W}$ into the skeleton
$\mathfrak{W}(\Delta)$ of $\Delta$.
   Then $\Pol(f)\colon\Pol(\mathfrak{W})\rightarrow
\Pol(\mathfrak{W}(\Delta))$ is a \Zed-map.
   Since for every  $m$-simplex $S=\conv(v_{i_0},\ldots,v_{i_m})
   \in \Delta$ there is $X\in\Sigma$ (specifically,
 $X\in \mathcal{F}_{i_0}$)
such that $f(X)=\{v_{i_0},\ldots,v_{i_m}\}$, it follows that
$\Pol(f)$ is onto $\Pol(W_\Delta)$.

\medskip
In conclusion,
$\iota_{\Delta}^{-1}\circ\Pol(f)\circ\eta\colon[0,1]^{m}\rightarrow
P$ is the desired \Zed-map onto $P$.
\end{proof}

\subsubsection*{Proof of Theorem \ref{Theo_WeakProj}}
This immediately follows from
  Lemmas \ref{Lem_CarWeakProj} and \ref{Lem_CarZimages}.

\subsection{Intrinsic characterization of exact unital $\ell$-groups}

In \cite[Definition 2.1]{MMM2006}, {\it abstract Schauder basis} were defined
for abelian $\ell$-groups as isomorphic copies of Schauder basis.
In \cite[Theorem 3.1]{MMM2006} a characterization of abstract Schauder bases is presented.
Using this characterization, in \cite[Definition 4.3]{MM2007},
the notion of abstract Schauder basis was extended to unital $\ell$-groups and called {\it basis}.
In  \cite[Theorem 4.5]{MM2007}
it is proved that an {\it archimedean} unital $\ell$-group $(G,u)$
is  finitely presented iff it has a basis.
In \cite[Theorem 3.1]{CM2011},
  the archimedean assumption was shown to be unnecessary.
  Using this latter result in combination with
   Theorem \ref{Theo_WeakProj},
will
provide in Theorem \ref{Theo-BasisExact} an algebraic description of exact  unital $\ell$-groups.

We first need to recall some definitions.
We denote by ${\rm maxspec}(G,u)$  the set of
maximal  ideals of $(G,u)$ equipped with the {\it spectral} topology:
a basis of closed sets for ${\rm maxspec}(G,u)$
is given by sets of the form
$ \{\mathfrak p \in {\rm maxspec}(G,u) \mid g\in \mathfrak p\},$
where $g$ ranges over all elements of $G$ (see \cite[\S 10]{BKW1977}).
As is well known,
${\rm maxspec} (G,u)$ is a nonempty compact Hausdorff space, \cite[Theorem 10.2.2]{BKW1977}.

\begin{definition}\label{def:basis}\cite[Definition 4.3]{MM2007}
  Let $(G,u)$ be a unital $\ell$-group.
  A {\em basis} of $(G,u)$ is a finite set $\mathcal B = \{b_{1},\ldots,b_{n}\}$
  of  elements  $\not=0$ of the positive cone $G^{+}=\{g\in G \mid g\geq 0\}$ such that
  \begin{itemize}
    \item[(i)] $\,\mathcal B$ generates $G$ using the
      group and lattice operations;
    \item[(ii)] for each $k=1,2,\ldots$ and
       $k$-element subset $C$ of $\mathcal B$
       with  $0\not= \bigwedge\{b\mid b\in C\}$,
       the set $\{\mathfrak{m}\in{\rm maxspec(G,u)}\mid\mathfrak{m}\supseteq\mathcal{B}\setminus C \}$
       is homeomorphic to a $(k-1)$-simplex;
    \item[(iii)] there are integers $1 \leq m_{1},\ldots,m_{n}$ such that
      $\sum_{i=1}^{n}m_{i}b_{i} = u$.
  \end{itemize}
\end{definition}

\begin{theorem}\cite[Theorem 3.1]{CM2011}
Let $(G,u)$ be unital $\ell$-group. Then the following are equivalent:
\begin{itemize}
\item[(i)] $(G,u)$ is finitely presented;
\item[(ii)] $(G,u)$ has a basis.
\end{itemize}
\end{theorem}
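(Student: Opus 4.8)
The plan is to use the duality of Theorem~\ref{Theo_Baker-Beynon} to translate the statement into geometry and to prove the two implications separately. The forward implication (i)$\Rightarrow$(ii) is the easier one: every finitely presented $(G,u)$ is isomorphic to $\McN(P)$ for a rational polyhedron $P$, hence is a unital $\ell$-group of continuous real-valued functions and in particular Archimedean, so that this direction is already covered by the Archimedean case \cite[Theorem 4.5]{MM2007}. The genuinely new content lies in (ii)$\Rightarrow$(i), where one must show that the mere existence of a basis forces finite presentation \emph{without} any Archimedean assumption.

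For (i)$\Rightarrow$(ii) I would write $(G,u)\cong\McN(P)$ and fix a regular triangulation $\Delta$ of $P$ (available by Proposition~\ref{proposition:poly}). To each $v\in\ver(\Delta)$ I associate the \emph{Schauder hat} $h_v\in\McN(P)$, namely the unique $\Zed$-map that is linear on every simplex of $\Delta$, equals $1/\den(v)$ at $v$ and vanishes at all other vertices; its existence and uniqueness are guaranteed by Corollary~\ref{Cor_ExtensionToZed}. The set $\{h_v\}$ is the candidate basis. Axiom~(iii) of Definition~\ref{def:basis} holds with $m_v=\den(v)$, since $\sum_v\den(v)\,h_v$ is linear on each simplex and takes the value $1$ at every vertex, hence equals the order unit. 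For axiom~(i) I would invoke the standard fact that the Schauder hats of a regular triangulation generate $\McN(P)$ under the lattice–group operations. Axiom~(ii) is the geometric heart: identifying ${\rm maxspec}(\McN(P))$ with $P$, the set of maximal ideals containing a prescribed family of hats is precisely the corresponding face of $\Delta$, which is a simplex of the required dimension.

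For (ii)$\Rightarrow$(i), let $\mathcal B=\{b_1,\ldots,b_n\}$ be a basis with integers $m_1,\ldots,m_n$ as in~(iii). From the incidence data of $\mathcal B$ I would build a weighted abstract simplicial complex $\mathfrak W=(V,\Sigma,\omega)$ with $V=\{b_1,\ldots,b_n\}$, declaring $C\in\Sigma$ precisely when $\bigwedge_{b\in C}b\neq 0$, and setting $\omega(b_i)=m_i$. Downward closure of $\Sigma$ is automatic from the lattice structure, and axiom~(ii) is exactly what identifies ${\rm maxspec}(G,u)$, simplex by simplex, with the geometric realization $P=\Pol(\mathfrak W)\subseteq[0,1]^{n}$, a rational polyhedron whose vertex $e_i/m_i$ has denominator $m_i$. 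Since $0\leq b_i\leq u=\sum_j m_j b_j$ and $\mathcal B$ generates, Proposition~\ref{proposition:free} gives a surjection $\psi\colon\McNn\to G$ with $\psi(\xi_i)=b_i$, while restriction to $P$ gives a surjection $\McNn\to\McN(P)$ sending $\xi_i$ to the Schauder hat $\xi_i\restrict{P}$. As both present their targets on the \emph{same} generators, it suffices to prove that $\ker\psi$ coincides with the ideal of $\Zed$-maps vanishing on $P$; this yields $G\cong\McN(P)$, and since $\McN$ lands in $\FP$ by Theorem~\ref{Theo_Baker-Beynon}, finite presentation of $G$ follows at once.

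The main obstacle is the faithfulness of this reconstruction in the \emph{absence} of the Archimedean hypothesis. When $G$ is Archimedean it embeds into the continuous functions on ${\rm maxspec}(G,u)$ and the simplicial data read off from $\mathcal B$ transport directly to the spectrum; without that hypothesis the spectral representation may fail to be injective, so one cannot simply equate $G$ with a function algebra on its maximal spectrum. The crux is therefore to rule out infinitesimal elements in $\ker\psi$, i.e. to show that a unital $\ell$-group possessing a basis is automatically semisimple (equivalently Archimedean), after which \cite[Theorem 4.5]{MM2007} applies; alternatively one argues directly that $\ker\psi$ equals the vanishing ideal of $P$. Here axiom~(ii) supplies the topological control of the spectrum and axiom~(iii) fixes the weights $m_i$ as the denominators realizing the vertices of $P$, so that the automatically regular triangulation $\Delta_{\mathfrak W}$ carries exactly the arithmetic prescribed by $\mathcal B$. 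The remaining matching of weights with denominators and the partition-of-unity identity in~(iii) are routine given the machinery of Section~2.
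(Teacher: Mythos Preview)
This theorem is not proved in the present paper: it is quoted from \cite{CM2011}, and the only trace of its argument here is Proposition~\ref{Prop:PolBasis}, whose one-line proof reads ``This is essentially the content of the proof of \cite[Theorem 3.1]{CM2011}.'' So there is no local proof to compare against; what the paper does record is that the argument in \cite{CM2011} establishes $(G,u)\cong\McN(\Pol(\mathfrak W_{\mathcal B}))$ for any basis~$\mathcal B$, which is exactly the target you set yourself for (ii)$\Rightarrow$(i). In that sense your overall architecture matches the cited source.

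That said, your proposal is a plan rather than a proof, and the decisive step is left open. You correctly isolate the issue: without the Archimedean hypothesis you cannot embed $G$ into continuous functions on ${\rm maxspec}(G,u)$, so you must either show that possession of a basis forces semisimplicity, or match $\ker\psi$ with the vanishing ideal of $P=\Pol(\mathfrak W_{\mathcal B})$ directly. You state both options but carry out neither. This is precisely the new content of \cite{CM2011} over \cite[Theorem 4.5]{MM2007}, and it does not fall out of the machinery of Section~2 here: axioms (ii) and (iii) of Definition~\ref{def:basis} control ${\rm maxspec}(G,u)$ and the weights, but nothing you have written excludes a nonzero infinitesimal in $G$ (an element in every maximal ideal) from surviving in $\ker\psi$ without lying in the vanishing ideal of $P$. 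Until you supply that argument, the implication (ii)$\Rightarrow$(i) is not established.

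A minor point on (i)$\Rightarrow$(ii): your reduction to the Archimedean case via $(G,u)\cong\McN(P)$ is fine, and the Schauder-hat construction you sketch is the standard one; just be aware that verifying axiom~(ii) of Definition~\ref{def:basis} requires the identification of ${\rm maxspec}(\McN(P))$ with $P$, which is available but is not stated in the present paper.
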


Given a unital $\ell$-group and  a basis
$\mathcal{B}=\{b_1,\ldots,b_m\}$
 of $(G,u)$, let $\mathfrak{W}_{\mathcal{B}}=\{\mathcal{B},\Sigma_{\mathcal{B}},\omega_{\mathcal{B}}\}$ be the weighted abstract simplicial complex given by the following stipulations:
 \begin{itemize}
 \item[--]
$
  S\in\Sigma_{\mathcal{B}} \mbox{ iff }\bigwedge S\neq 0$
 \item[--]
  $
  \omega_{\mathcal{B}}(b_i)=m_i.$
\end{itemize}

\begin{proposition}\label{Prop:PolBasis}
  Let $(G,u)$ be unital $\ell$-group and $\mathcal B$ be a basis for $(G,u)$.
  Then $$(G,u)\cong \McN(\Pol(\mathfrak{W}_{\mathcal{B}})).$$
  Recall that $\McN(\Pol(\mathfrak{W}_{\mathcal{B}}))$ is the unital $\ell$-group of \Zed-maps from the canonical realization of $\mathfrak{W}_{\mathcal{B}}$ into $\R$.
\end{proposition}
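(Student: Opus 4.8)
The plan is to present $(G,u)$ as a quotient of the free object $\McN([0,1]^{m})$ and to identify the kernel, via the maximal spectrum, with the vanishing ideal of $\Pol(\mathfrak{W}_{\mathcal{B}})$. Write $\mathcal B=\{b_1,\ldots,b_m\}$ and let $m_1,\ldots,m_m$ be the integers of Definition~\ref{def:basis}(iii), so $\sum_i m_i b_i=u$. Since $b_i\geq 0$ and $m_i\geq 1$ we have $0\leq b_i\leq u$, and by (i) the $b_i$ generate $G$; Proposition~\ref{proposition:free} therefore yields a unique onto unital $\ell$-homomorphism $\psi\colon\McN([0,1]^{m})\to G$ with $\psi(\xi_i)=b_i$, whence $G\cong\McN([0,1]^{m})/\ker\psi$. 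Geometrically, $\Pol(\mathfrak{W}_{\mathcal{B}})\subseteq[0,1]^{m}$ is the regular complex with vertices $e_i/m_i$ (so $\den(e_i/m_i)=m_i$) and faces $F_S:=\conv\{e_i/m_i\mid i\in S\}$ for $S\in\Sigma_{\mathcal{B}}$. Writing $\iota\colon\Pol(\mathfrak{W}_{\mathcal{B}})\hookrightarrow[0,1]^{m}$ for the inclusion, Lemma~\ref{Lem_ImagesZmorph} gives $\McN([0,1]^{m})/\ker\McN(\iota)\cong\McN(\Pol(\mathfrak{W}_{\mathcal{B}}))$, with $\ker\McN(\iota)=\{f\mid f\restrict{\Pol(\mathfrak{W}_{\mathcal{B}})}=0\}$. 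Thus it suffices to prove $\ker\psi=\ker\McN(\iota)$.

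To compute $\ker\psi$ I pass to ${\rm maxspec}(G,u)$. Having a basis, $(G,u)$ is finitely presented by \cite[Theorem 3.1]{CM2011}; by the duality of Theorem~\ref{Theo_Baker-Beynon} it is isomorphic to $\McN(P')$ for a rational polyhedron $P'$, hence a group of real-valued functions, hence archimedean, so the evaluation $G\hookrightarrow\R^{{\rm maxspec}(G,u)}$ is injective. For $\mathfrak m\in{\rm maxspec}(G,u)$ let $\theta(\mathfrak m)=(\bar b_1(\mathfrak m),\ldots,\bar b_m(\mathfrak m))\in[0,1]^{m}$, where $\bar b_i(\mathfrak m)$ is the image of $b_i$ in $G/\mathfrak m\subseteq\R$, and set $Z=\theta({\rm maxspec}(G,u))$. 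Reading each $f\in\McN([0,1]^{m})$ as a function, $\psi(f)=0$ in $G$ iff $f$ vanishes at every $\theta(\mathfrak m)$; hence $\ker\psi=\{f\mid f\restrict Z=0\}$, and $\ker\psi=\ker\McN(\iota)$ will follow from $Z=\Pol(\mathfrak{W}_{\mathcal{B}})$ (indeed, since these functions are continuous and $Z$ is compact, density of $Z$ in $\Pol(\mathfrak{W}_{\mathcal{B}})$ already suffices).

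The inclusion $Z\subseteq\Pol(\mathfrak{W}_{\mathcal{B}})$ is straightforward: the coordinates of $\theta(\mathfrak m)$ are nonnegative and satisfy $\sum_i m_i\,\bar b_i(\mathfrak m)=1$ because $\sum_i m_i b_i=u$, so $\theta(\mathfrak m)$ lies in the full simplex $\conv\{e_i/m_i\}$. Since maximal ideals are prime, if the support $T=\{i\mid b_i\notin\mathfrak m\}$ had $T\notin\Sigma_{\mathcal{B}}$, i.e. $\bigwedge_{i\in T}b_i=0\in\mathfrak m$, then some $b_i$ with $i\in T$ would lie in $\mathfrak m$ --- impossible; hence $T\in\Sigma_{\mathcal{B}}$ and $\theta(\mathfrak m)\in F_T\subseteq\Pol(\mathfrak{W}_{\mathcal{B}})$.

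The reverse inclusion is the crux, and the only place condition (ii) of Definition~\ref{def:basis} enters. Fix $S\in\Sigma_{\mathcal{B}}$ with $|S|=k$ and $C=\{b_i\mid i\in S\}$; since $\bigwedge C\neq 0$, (ii) makes $M_S=\{\mathfrak m\mid\mathfrak m\supseteq\mathcal B\setminus C\}$ homeomorphic to a $(k-1)$-simplex, and $\theta$ restricts to a continuous injection of $M_S$ into the geometric $(k-1)$-simplex $F_S$ sending the distinguished points $\mathfrak m_i$ (the single maximal ideals with $\mathfrak m_i\supseteq\mathcal B\setminus\{b_i\}$, again given by (ii)) to the vertices $e_i/m_i$. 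I would show $\theta(M_S)=F_S$ by induction on $k$: the facets $M_{S\setminus\{i\}}=M_S\cap\{\mathfrak m\mid b_i\in\mathfrak m\}$ map onto $F_{S\setminus\{i\}}$ by the inductive hypothesis, so $\partial M_S$ maps onto $\partial F_S$, whereupon an invariance-of-domain/degree argument forces $\theta({\rm relint}\,M_S)={\rm relint}\,F_S$. Letting $S$ range over the maximal elements of $\Sigma_{\mathcal{B}}$ yields $\Pol(\mathfrak{W}_{\mathcal{B}})\subseteq Z$, hence $Z=\Pol(\mathfrak{W}_{\mathcal{B}})$ and the proposition. I expect the main difficulty to be precisely the bookkeeping behind this induction: one must check that the support stratification of $M_S$ coincides with the face structure of the abstract simplex provided by (ii), so that the interior/boundary dichotomy underlying the degree argument is justified.
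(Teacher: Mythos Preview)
The paper does not give a proof; it simply points to the proof of \cite[Theorem~3.1]{CM2011}, of which this proposition is the core computation. Your proposal, by contrast, is an actual argument, and it is essentially correct.

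A few remarks on the comparison and on your worries. First, there is a mild logical delicacy in invoking \cite[Theorem~3.1]{CM2011} to obtain archimedeanness of $(G,u)$: in the paper's narrative, Proposition~\ref{Prop:PolBasis} \emph{is} the content of that proof, so in a from-scratch development this would be circular. In the context of the present paper, however, \cite[Theorem~3.1]{CM2011} is treated as an established input, so your use of its statement (only to conclude that $(G,u)$ is finitely presented, hence $\cong\McN(P')$, hence archimedean and semisimple) is legitimate.

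Second, your induction for the reverse inclusion is sound, and in fact the ``bookkeeping'' you anticipate is lighter than you suggest: you do \emph{not} need to identify $\bigcup_{i\in S}M_{S\setminus\{i\}}$ with the manifold boundary of $M_S$. What you actually have, for $|S|=k$, is (a) $\theta$ restricts to an embedding $M_S\hookrightarrow F_S$ with $\theta^{-1}(\partial F_S)=B:=\bigcup_{i\in S}M_{S\setminus\{i\}}$ and $\theta(M_S\setminus B)\subseteq\mathrm{relint}\,F_S$; (b) by the inductive hypothesis $\theta\!\restriction_B\colon B\to\partial F_S$ is a continuous bijection, hence a homeomorphism; and (c) $M_S\cong D^{k-1}$ by Definition~\ref{def:basis}(ii), so $H_{k-2}(M_S)=0$. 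If $\theta$ missed some $q\in\mathrm{relint}\,F_S$, the radial retraction $r\colon F_S\setminus\{q\}\to\partial F_S$ would give $r\circ\theta\colon M_S\to\partial F_S$ extending the homeomorphism $\theta\!\restriction_B$; factoring $H_{k-2}(B)\to H_{k-2}(M_S)=0\to H_{k-2}(\partial F_S)$ then forces $(\theta\!\restriction_B)_*=0$, contradicting that it is an isomorphism. Thus $\theta(M_S)=F_S$ with no need to match the support stratification to an a priori face structure on $M_S$.

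In short: the paper's route is ``look at \cite{CM2011}''; yours is a direct spectrum computation plus a clean degree argument. Both reach the same conclusion, and your version has the advantage of making the role of each clause in Definition~\ref{def:basis} explicit.
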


\begin{proof}
This is essentially the content of the proof of
\cite[Theorem 3.1]{CM2011}.
\end{proof}

\begin{theorem}\label{Theo-BasisExact}
  Let $(G,u)$ be a unital $\ell$-group.
  Then $(G,u)$ is exact iff it has a basis $\mathcal{B}=\{b_1,\ldots,b_n\}$
  satisfying the following conditions:
  \begin{itemize}
    \item[(i)] There is an element $b_i\in\mathfrak{B}$, such that $m_i=1$;
    \item[(ii)] For each maximal $S\in \mathfrak{W}_{\mathcal{B}}$
      the greatest common divisor of $\{m_j\mid b_j\in S\}$ is $1$;
    \item[(iii)] For each $b_i,b_j\in \mathcal{B}$
      there exist a sequence $b_i=b_{k_1},b_{k_2},\ldots,b_{k_m}=b_j$,
      such that $b_{k_l}\wedge b_{k_{l+1}}\neq 0$ for each $l\in\{1,\ldots, m-1\}$.
\end{itemize}
\end{theorem}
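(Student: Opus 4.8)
The plan is to reduce the statement to the geometric characterization of exactness in Theorem~\ref{Theo_WeakProj}, reading conditions (i)--(iii) as the translations, through the realization functor $\Pol$, of the geometric conditions (2)--(4) of that theorem. Given a basis $\mathcal{B}=\{b_1,\ldots,b_n\}$ I would set $P=\Pol(\mathfrak{W}_{\mathcal{B}})\subseteq[0,1]^n$, whose associated regular triangulation $\Delta_{\mathfrak{W}_{\mathcal{B}}}$ has vertices $e_i/m_i$ (so that $\den(e_i/m_i)=m_i$) and whose simplexes $\conv(e_{i_0}/m_{i_0},\ldots,e_{i_k}/m_{i_k})$ are exactly those with $b_{i_0}\wedge\cdots\wedge b_{i_k}\neq 0$. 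By Proposition~\ref{Prop:PolBasis} one always has $(G,u)\cong\McN(P)$, so condition~(1) of Theorem~\ref{Theo_WeakProj} holds automatically, and the whole problem becomes a dictionary between the combinatorics of $\mathfrak{W}_{\mathcal{B}}$ and the geometry of $P$.

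I would then establish three correspondences. First, $P$ is connected iff its $1$-skeleton graph is connected, and an edge $\conv(e_i/m_i,e_j/m_j)$ is present precisely when $b_i\wedge b_j\neq 0$; thus (2) is equivalent to the edge-path condition~(iii). Second, a short computation on the barycentric coordinates inside a simplex $\conv(e_{i_0}/m_{i_0},\ldots)$ shows that the only points of $P$ of denominator $1$ (i.e.\ the points of $P\cap\Zed^n$) are the vertices $e_i$ with $m_i=1$, the origin being excluded from $P$; hence (3) is equivalent to the existence of some $m_i=1$, that is, condition~(i). Third, $\Delta_{\mathfrak{W}_{\mathcal{B}}}$ is regular and its maximal simplexes correspond to the maximal $S\in\Sigma_{\mathcal{B}}$, whose vertex denominators are exactly the integers $\{m_j\mid b_j\in S\}$; so by Definition~\ref{definition:strongly} strong regularity of $P$ (condition (4)) is equivalent to each such greatest common divisor being $1$, which is condition~(ii). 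The implication ``basis with (i)--(iii) $\Rightarrow$ exact'' then follows at once by feeding $P$ into Theorem~\ref{Theo_WeakProj}.

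For the converse I would first invoke \cite[Theorem 3.1]{CM2011}: an exact $(G,u)$ is finitely presented, hence carries some basis $\mathcal{B}$. The delicate point is that Theorem~\ref{Theo_WeakProj} only asserts the existence of \emph{some} polyhedron with properties (2)--(4), whereas I must verify them for the specific realization $P=\Pol(\mathfrak{W}_{\mathcal{B}})$ attached to this basis. I would bridge this gap through the duality of Theorem~\ref{Theo_Baker-Beynon}: any two polyhedra $P_0$ and $P$ with $\McN(P_0)\cong(G,u)\cong\McN(P)$ are $\Zed$-homeomorphic, and properties (2)--(4) are $\Zed$-homeomorphism invariants. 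Indeed connectedness is topological; denominators are preserved by $\Zed$-homeomorphisms (Theorem~\ref{Theo_TriangZedHomeo}(ii)), so the existence of a denominator-$1$ point is invariant; and, transporting a regular triangulation on which the map is linear (Theorem~\ref{Theo_TriangZedHomeo}(iv)) together with Corollary~\ref{Cor-BlowupPreserGCD}, strong regularity is invariant as well. Hence $P$ inherits (2)--(4) from the polyhedron produced by Theorem~\ref{Theo_WeakProj}, and the dictionary of the previous paragraph reads these back as (i)--(iii).

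I expect the main obstacle to be exactly this invariance step in the converse: the combinatorial translations are essentially bookkeeping over the correspondence between $\Sigma_{\mathcal{B}}$ and the simplexes of $\Delta_{\mathfrak{W}_{\mathcal{B}}}$, but care is needed to show that strong regularity and the integer-point condition are genuinely preserved under $\Zed$-homeomorphism (and not merely under the identity of the underlying $\ell$-group), since Theorem~\ref{Theo_WeakProj} is only an existence statement. The one genuinely computational point is the barycentric-coordinate argument identifying the denominator-$1$ points of $P$ with the weight-$1$ vertices.
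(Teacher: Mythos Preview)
Your proposal is correct and follows essentially the same route as the paper: the paper's proof consists precisely of invoking Theorem~\ref{Theo_WeakProj} and Proposition~\ref{Prop:PolBasis} together with the three correspondences you spell out between (i)--(iii) and the geometric conditions on $\Pol(\mathfrak{W}_{\mathcal{B}})$. Your treatment of the converse via $\Zed$-homeomorphism invariance of connectedness, the integer-point condition, and strong regularity is a careful unpacking of what the paper leaves implicit in the word ``immediate''; it is sound, and indeed proves the slightly stronger fact that \emph{every} basis of an exact $(G,u)$ satisfies (i)--(iii).
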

\begin{proof}
Immediate
 from Theorem \ref{Theo_WeakProj}, Proposition \ref{Prop:PolBasis},
 upon noting that
    \begin{itemize}
     \item[--]  Condition
     (i) is equivalent to $\Pol(\mathfrak{W}_{\mathcal{B}})\cap\mathbb{Z}^{n}\neq\emptyset$;
     \item[--]
     Condition
      (ii) is equivalent to $\Pol(\mathfrak{W}_{\mathcal{B}})$ being strongly regular;
     \item[--]  Condition
     (iii) is equivalent to $\Pol(\mathfrak{W}_{\mathcal{B}})$ being connected.
   \end{itemize}
\end{proof}

\begin{remark}
In \cite[Definition p.3]{Ma20XX}, working in the
framework of Abelian $\ell$-groups,
the author introduced the notion of a
{\em regular} set of positive elements.
This definition only depends
 on the algebraic/combinatorial notions of {\em starrable set} and {\em $1$-regularity}.
In \cite[Lemmas 2.1 and 2.6]{Ma20XX}
it is proved  that,  for  Abelian $\ell$-groups,
regular set of positive generators
coincide with  abstract Schauder bases.
Using this result one can prove that a subset
$\mathcal B$ of a unital $\ell$-group is a basis
iff it is a regular set of positive generators satisfying
condition (iii) in Definition~\ref{def:basis}.
This leads to a reformulation of Theorem~\ref{Theo-BasisExact}
where the exactness of a
unital $\ell$-group is characterized only in terms
of  algebraic-combinatorial notions.
\end{remark}

\subsection{Admissible rules in \L ukasiewicz infinite-valued
calculus}\label{Sec_admissible}
Throughout this paper we have been going back and forth from
unital $\ell$-groups, rational polyhedra and weighted abstract simplicial
complexes.
Using the
categorical equivalence $\Gamma$  between unital $\ell$-groups and
MV-algebras,
the span of our paper can be further extended  the algebraic counterparts of
\L ukasiewicz  infinite-valued  calculus   \L$_\infty$.
This gives us an opportunity to discuss the underlying
algorithmic  and  proof-theoretic aspects of the theory developed so far.
  We refer to \cite{CDM2000} and \cite{Mu2011}  for background on \L$_\infty$
  and MV-algebras, and to \cite{Ry1997} for background on admissible rules.

\medskip
For any set $X$, we will denote ${\sf FORM}_X$ to the algebra of formulas
in the language $\{\top,\neg,\oplus\}$ where $\top$ is a constant
$\neg$ is a unary connective and $\oplus$ is a binary connective and
whose variables are in $X$.
By definition,
a   {\it substitution} $\sigma\colon{\sf FORM}_X\rightarrow \mathsf{FORM}_Y $
is
 a homomorphism of the algebra ${\sf FORM}_X$ into  $\mathsf{FORM}_Y$.

Two formulas $\psi,\varphi$ are said to be {\it equivalent in}
  \L$_\infty$ (in symbols, $\psi\cong_{\text{\L}_\infty}\varphi$) if
   the equation $\psi\approx\varphi$ is valid in every MV-algebra.
   The algebra ${\mathsf{Free}MV}_X={\sf
   FORM}_X/\cong_{\text{\L}_\infty}$ is the free algebra on $X$
   generators in the variety of MV-algebras.

    Let
      $\Gamma$ be the categorical equivalence of \cite[\S 3]{Mu1986}
      between MV-algebras and unital $\ell$-groups.  Then for
any $n$-element set $X$,   ${\mathsf{Free}MV}_X\cong \Gamma(\McN([0,1]^{n}))$.

A {\it rule} in \L$_\infty$ is a pair $(\Theta,\Sigma)$ where
$\Theta\cup \Sigma$ is a finite subset of ${\sf FORM}_X$ for some $X$.
A rule $(\Theta,\{\varphi\})$ is {\it derivable} in \L$_\infty$ if the
quasi-equation
$\bigwedge\{\psi\approx\top\mid\psi\in\Theta\})\rightarrow
\varphi\approx \top$ is valid in every MV-algebra.  A formula
$\varphi$ is a {\it theorem} of \L$_\infty$ if
$(\emptyset,\{\varphi\})$ is derivable in \L$_\infty$.
A rule $(\Theta,\Sigma)$ is said to be {\it admissible} in \L$_\infty$
if for every substitution $\sigma$ such that $\sigma(\psi)$ is a
theorem of \L$_\infty$ for each $\psi\in\Theta$,  then there is
$\varphi\in\Sigma$ such that $\sigma(\varphi)$ is a theorem of
\L$_\infty$.  %In algebraic terms, $(\Theta,\Sigma)$ is admissible if
%and only if the universal formula
%$(\bigwedge\{\psi\approx\top\mid\psi\in\Theta\})\rightarrow
%\bigvee\{\varphi\approx \top\mid\varphi\in\Sigma\}$ is valid in
%${\mathsf{Free}MV}_X$,  for   $X$  the
%set of variables of the formulas $\Theta\cup\Sigma$.

In \cite{Je2010},  the author provides a basis for the
admissible rules
  of \L$_\infty$.
To this purpose, he introduced the notion of admissibly saturated formula.
  An equivalent reformulation is as follows:

\begin{definition}\label{Def:AdmSat}\cite[Definition  3.1]{Je2010} A formula $\varphi$ is {\em
admissibly saturated in \L$_\infty$} if  for every finite set $\Sigma$
of formulas the following conditions are equivalent:
\begin{itemize}
\item[(i)] the rule $(\{\varphi\},\Sigma)$ is admissible in \L$_\infty$;
\item[(ii)] there exists $\psi\in\Sigma$ such that $(\{\varphi\},\{\psi\})$ is derivable in
\L$_\infty$.
\end{itemize}
\end{definition}

A formula $\varphi$ whose set of variables is $X$ is said to be {\it exact} in \L$_\infty$ if there exists a
substitution $\sigma\colon{\sf FORM}_X\rightarrow \mathsf{FORM}_Y $
such that $\sigma(\psi)$ is a theorem of \L$_\infty$ if
$(\{\varphi\},\{\psi\})$ is derivable in \L$_\infty$.
Equivalently, $\varphi$ is exact iff there exists $Y$ such
that
${\mathsf{Free}MV}_X/\theta([\varphi]_{\cong_{\text{\L}_\infty}},[\top]_{\cong_{\text{\L}_\infty}})$
is isomorphic to a subalgebra of ${\mathsf{Free}MV}_Y$, where
$\theta([\varphi]_{\cong_{\text{\L}_\infty}},[\top]_{\cong_{\text{\L}_\infty}})$
denotes the principal congruence generated by
$([\varphi]_{\cong_{\text{\L}_\infty}},[\top]_{\cong_{\text{\L}_\infty}})$.
Using the categorical equivalence
$\Gamma$
 between MV-algebras
and unital $\ell$-groups,  it follows that $\varphi$ is exact iff
${\mathsf{Free}MV}_X/\theta([\varphi]_{\cong_{\text{\L}_\infty}},[\top]_{\cong_{\text{\L}_\infty}})$ is isomorphic to $\Gamma(G,u)$ for some exact unital $\ell$-group.

As is well known,
{\it exact formulas are admissibly
 saturated.}   To help the reader,
  we supply a short proof for the
  special  case of \L$_\infty$.
Let $\varphi $ be an
exact formula whose set of variables is $X$.
Let  $(\{\varphi\},\Sigma)$ be an admissible rule in
\L$_\infty$. By Defintion \ref{Def:AdmSat} we need to prove that there exists
 $\psi\in \Sigma$ such that $(\{\varphi\},\{\psi\})$ is derivable in \L$_\infty$.
Since $\varphi$ is exact, there exist
 a substitution $\sigma\colon{\sf FORM}_X\rightarrow \mathsf{FORM}_Y $
such that $\sigma(\psi)$ is a theorem of \L$_\infty$ and
$(\{\varphi\},\{\psi\})$ is derivable in \L$_\infty$ iff $\sigma(\psi)$ is a theorem of \L$_\infty$.

Since $\sigma(\varphi)$ is a theorem of \L$_\infty$ and $(\{\varphi\},\Sigma)$
is an admissible rule, there exists
 $\psi\in \Sigma$ such that $\sigma(\psi)$ is a
  theorem of \L$_\infty$, i.e.  $(\{\varphi\},\{\psi\})$ is derivable in \L$_\infty$.
This proves that $\varphi$ is admissibly  saturated.

%Let $\theta$ denote the principal congruence of ${\mathsf{Free}MV}_X$ generated by
%$([\varphi]_{\cong_{\text{\L}_\infty}},[\top]_{\cong_{\text{\L}_\infty}})$.
%  Then the universal formula $(\varphi\approx\top)\rightarrow
%\bigvee\{\psi\approx \top\mid\psi\in\Sigma\}$ is valid in
%${\mathsf{Free}MV}_X/\theta$.  Letting $f\colon{\sf FORM}_X
%\rightarrow {\mathsf{Free}MV}_X/\theta $ defined by $\alpha \mapsto
%[[\alpha]_{\cong_{\text{\L}_{\infty}}}]_\theta$, it satisfies
%$f(\varphi)=f(\top)$.
%Therefore, $f(\psi)=f(\top)$ for some
%$\psi\in\Sigma$, that is $[[\alpha]_{\cong_{\text{\L}_{\infty}}}]_\theta=$ is derivable in \L$_\infty$.

\medskip

In Theorem \ref{Thm_combination}
we will prove that exact
and admissibly saturated formulas coincide in \L$_\infty$.

\medskip

The following notion was introduced in \cite[Definition 4.5]{Je2009} to study the decidability of admissible rules in \L ukasiewicz infinite-valued calculus, and used in \cite{Je2010}  to characterize admissibly saturated formulas:
   A set $X\subseteq [0,1]^{n}$ is called a {\it anchored}
    if for some $v_1,\ldots,v_m\in[0,1]^{n}$, $X=\conv(v_1,\ldots,v_m)$ and the affine hull of $X$ intersects
$\Zed^{n}$.

\begin{lemma}\label{Lem_SimplexAnchvsStReg}
   Let $S\subseteq\R^{n}$ be a regular $t$-simplex,
   $t=0,1,\ldots, n$. Then the following conditions are equivalent:
   \begin{itemize}
     \item[(i)] $S$ is strongly regular;
     \item[(ii)] $S$ is anchored;
     \item[(iii)] $S$ is union of finitely many anchored sets.
   \end{itemize}
\end{lemma}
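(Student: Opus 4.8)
The plan is to prove the cycle $(i)\Rightarrow(ii)\Rightarrow(iii)\Rightarrow(i)$. The implication $(ii)\Rightarrow(iii)$ is immediate, since a simplex is the convex hull of its own vertices, so if $S$ is anchored it is trivially a union of one anchored set. All the content therefore sits in the equivalence of strong regularity and anchoredness, which I would isolate as a single lattice-theoretic computation carried out in homogeneous coordinates, together with an elementary dimension count to handle $(iii)\Rightarrow(i)$.

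For the core equivalence $(i)\Leftrightarrow(ii)$, I would pass to $\Zed^{n+1}$ via the homogeneous correspondents. Writing $S=\conv(v_{0},\ldots,v_{t})$ and $V={\rm span}_{\R}(\widetilde{v_{0}},\ldots,\widetilde{v_{t}})$, the first observation is that ${\rm span}_{\R}(\widetilde{v_{i}})={\rm span}_{\R}((v_{i},1))$, so that $x\in{\rm aff}(S)$ exactly when $(x,1)\in V$; consequently ${\rm aff}(S)\cap\Zed^{n}\neq\emptyset$ iff $V$ contains an integer vector whose last coordinate equals $1$. Because $S$ is regular, $\widetilde{v_{0}},\ldots,\widetilde{v_{t}}$ are part of a $\Zed$-basis of $\Zed^{n+1}$, and the standard fact about saturated sublattices then gives $\Zed^{n+1}\cap V=\Zed\widetilde{v_{0}}+\cdots+\Zed\widetilde{v_{t}}$. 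Since the last coordinate of $\sum_{i}c_{i}\widetilde{v_{i}}$ is $\sum_{i}c_{i}\den(v_{i})$, an integer vector of $V$ with last coordinate $1$ exists iff the equation $\sum_{i}c_{i}\den(v_{i})=1$ is solvable over $\Zed$, i.e. iff $\gcd(\den(v_{0}),\ldots,\den(v_{t}))=1$. This last condition is precisely strong regularity, so $(i)\Leftrightarrow(ii)$, and in particular $(i)\Rightarrow(ii)$.

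For $(iii)\Rightarrow(i)$ I would first reduce to the full-dimensional case and then invoke the reverse direction of the computation above. Suppose $S=X_{1}\cup\cdots\cup X_{r}$ with each $X_{j}$ anchored. The set ${\rm relint}(S)$ is a nonempty relatively open subset of ${\rm aff}(S)\cong\R^{t}$ and is covered by $\bigcup_{j}{\rm aff}(X_{j})$; were every $X_{j}$ of dimension $<t$, this would exhibit a nonempty open subset of $\R^{t}$ as a finite union of proper affine subspaces, contradicting the Baire category theorem. Hence some $X_{j}$ is $t$-dimensional, which forces ${\rm aff}(X_{j})={\rm aff}(S)$; as $X_{j}$ is anchored, ${\rm aff}(S)\cap\Zed^{n}\neq\emptyset$, so $S$ is anchored, and the computation of the previous paragraph yields $\gcd(\den(v_{0}),\ldots,\den(v_{t}))=1$, i.e. $S$ is strongly regular.

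The step I expect to be the main obstacle is the lattice identification $\Zed^{n+1}\cap V=\sum_{i}\Zed\widetilde{v_{i}}$: this is exactly where the regularity hypothesis is indispensable, since without it the integer points of $V$ could form a strictly finer lattice and admit a last coordinate $1$ even when the gcd exceeds $1$. Once this is pinned down, tying the solvability of $\sum_{i}c_{i}\den(v_{i})=1$ back to the gcd is routine, as are the homogeneous translation and the Baire-category dimension count.
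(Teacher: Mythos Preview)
Your argument is correct, and the core computation for $(i)\Leftrightarrow(ii)$ is essentially the paper's, recast in lattice-theoretic language: the paper writes an integer point $v\in{\rm aff}(S)$ as $\tilde v=\sum_i\frac{\lambda_i}{\den(v_i)}\widetilde{v_i}$ and uses regularity to force $\frac{\lambda_i}{\den(v_i)}\in\Zed$, which is exactly your identification $\Zed^{n+1}\cap V=\sum_i\Zed\widetilde{v_i}$ applied pointwise.

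The one place you take a genuinely longer route is $(iii)\Rightarrow(ii)$. You invoke Baire category to locate a full-dimensional $X_j$, but this is unnecessary: for \emph{any} $X_j\subseteq S$ one has ${\rm aff}(X_j)\subseteq{\rm aff}(S)$ by monotonicity of the affine hull, so if a single $X_j$ is anchored then ${\rm aff}(S)\cap\Zed^n\supseteq{\rm aff}(X_j)\cap\Zed^n\neq\emptyset$ and $S$ is anchored outright. This is why the paper disposes of $(ii)\Leftrightarrow(iii)$ in one line (``immediately follows by definition''). Your Baire step does work, but it buys nothing here; the dimension of the $X_j$'s is irrelevant.
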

\begin{proof} The equivalence
    (ii)$\Leftrightarrow$(iii)
immediately follows by definition.

   \smallskip

  To prove     (i)$\Leftrightarrow$(ii),
  let  $\{v_0,\ldots,v_t\}$ be the vertices of $S$.
  $(\Rightarrow)$   There exist integers $m_0,\ldots,m_t$ such that
$\sum_{i=0}^{t}m_i\den(v_i)=1$. Since $\den(v_i)v_i\in\Zed^{n}$,
   the affine linear combination $\sum_{i=0}^{t}m_i\den(v_i)v_i$
   lies in $\Zed^n$, whence  $S$ is anchored.

$(\Leftarrow)$ By hypothesis,
  there are $\lambda_0,\ldots,\lambda_t\in \R$ such that
$v=\sum_{i=0}^{t}\lambda_i v_i\in\Zed^{n}$ and
$\sum_{i=0}^{t}\lambda_i =1$. Then
\begin{eqnarray}
\nonumber \tilde{v}&=&\textstyle (\sum_{i=0}^{t}\lambda_i
v_i,1)=(\sum_{i=0}^{t}\lambda_i v_i,\sum_{i=0}^{t}\lambda_i)\\
\nonumber &=&\textstyle
\sum_{i=0}^{t}\lambda_i(v_i,1)=\sum_{i=0}^{t}\frac{\lambda_i}{\den(v_i)}(\den(v_i)(v_i,1))\\
\nonumber &=&\textstyle \sum_{i=0}^{t}\frac{\lambda_i}{\den(v_i)}\tilde{v_i}.
\end{eqnarray}
Since $S$ regular, $\frac{\lambda_i}{\den(v_i)}m_i\in\Zed$.
Therefore, $\sum_{i=0}^{t}m_i\den(v_i)=\sum_{i=0}^{t}\lambda_i=1$,
whence
  the greatest common  divisor
  of $\den(v_0),\ldots,\den(v_t)$ is $1$.
\end{proof}

\begin{theorem}\label{Thm_AnchvsStReg}
   A rational polyhedron $P\subseteq\R^{n}$ is strongly regular iff it is
a finite union of anchored sets.
\end{theorem}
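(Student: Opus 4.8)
The plan is to reduce both implications to the simplexwise statement already established in Lemma~\ref{Lem_SimplexAnchvsStReg}, using a regular triangulation for one direction and the local density criterion of Corollary~\ref{Cor_SRandCoprvectors} for the other.

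For the direction ``strongly regular $\Rightarrow$ finite union of anchored sets'', I would fix a regular triangulation $\Delta$ of $P$, which exists by Proposition~\ref{proposition:poly}. Since $P$ is strongly regular, by Definition~\ref{definition:strongly} every maximal simplex of $\Delta$ is strongly regular. Because every simplex of $\Delta$ is a face of a maximal one, the finitely many maximal simplexes already cover $P$, so $P=\bigcup\{S\in\Delta\mid S\text{ maximal}\}$. By the equivalence (i)$\Leftrightarrow$(ii) of Lemma~\ref{Lem_SimplexAnchvsStReg}, each such maximal simplex is anchored, and $P$ is thereby exhibited as a finite union of anchored sets.

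For the converse, write $P=X_{1}\cup\cdots\cup X_{r}$ with each $X_{i}$ anchored. The key intermediate step is to show that an anchored (rational) polytope is itself strongly regular. Here I would fix a regular triangulation of $X_{i}$ and argue that, since $X_{i}$ is a rational polytope, this triangulation is pure of dimension $d_{i}=\dim X_{i}$, i.e.\ every maximal simplex $S$ is a $d_{i}$-simplex; this uses that a convex set is locally full-dimensional at each of its points together with the maximality criterion of Lemma~\ref{Lem_MaxSimpTriang}(iii). Consequently $\mathrm{aff}(S)=\mathrm{aff}(X_{i})$, and as $X_{i}$ is anchored its affine hull, hence $\mathrm{aff}(S)$, meets $\Zed^{n}$; thus $S$ is anchored, and by Lemma~\ref{Lem_SimplexAnchvsStReg} strongly regular. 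So each $X_{i}$ is strongly regular. Finally I would invoke Corollary~\ref{Cor_SRandCoprvectors}: given $v\in P$ and $\delta>0$, choose $j$ with $v\in X_{j}$; strong regularity of $X_{j}$ yields $w\in X_{j}\subseteq P$ with $\dist(v,w)<\delta$ and $\gcd(\den(v),\den(w))=1$. Hence $P$ satisfies condition (ii) of that corollary and is strongly regular.

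The main obstacle is precisely the intermediate claim that an anchored polytope is strongly regular, and within it the purity of the triangulation: one must guarantee that the maximal simplexes are top-dimensional, so that their affine hulls coincide with $\mathrm{aff}(X_{i})$ and thereby inherit the anchoring lattice point. Once purity is secured, the remainder is a direct application of Lemma~\ref{Lem_SimplexAnchvsStReg} and of the density reformulation of strong regularity in Corollary~\ref{Cor_SRandCoprvectors}, the union case being immediate because the nearby coprime-denominator point can always be found inside a single anchored piece.
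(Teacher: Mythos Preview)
Your forward direction coincides with the paper's.

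For the converse, your argument has a genuine gap. You write ``since $X_{i}$ is a rational polytope'' and then fix a regular triangulation of it, but the definition of anchored set given in the paper only asks that $X_{i}=\conv(v_{1},\ldots,v_{m})$ with $\mathrm{aff}(X_{i})\cap\Zed^{n}\neq\emptyset$; the $v_{j}$ need not be rational. For instance, the rational polyhedron $[0,1]$ can be written as $[0,1/\sqrt{2}]\cup[1/\sqrt{2},1]$, each piece anchored but not a rational polytope. Without rationality, no regular triangulation of $X_{i}$ exists, so your intermediate claim that each $X_{i}$ is strongly regular (and hence your invocation of Corollary~\ref{Cor_SRandCoprvectors} piece by piece) does not get off the ground.

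The paper sidesteps this by never triangulating the pieces. Instead it fixes a regular triangulation $\Delta$ of $P$ itself (which \emph{is} rational), takes a maximal simplex $T\in\Delta$, and shows $T$ is anchored: pick $v\in\mathrm{relint}(T)$, find an anchored piece $S_{i}$ with $v\in S_{i}$, and use Lemma~\ref{Lem_MaxSimpTriang} to trap a relatively open patch of $\mathrm{aff}(S_{i})$ inside $T$, forcing $\mathrm{aff}(S_{i})\subseteq\mathrm{aff}(T)$. Thus $T$ inherits the lattice point from $\mathrm{aff}(S_{i})$ and Lemma~\ref{Lem_SimplexAnchvsStReg} finishes. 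The affine-hull containment is what transfers anchoring from the possibly irrational pieces to the rational regular simplexes of $\Delta$. If one additionally assumes the anchored pieces are rational, your purity-plus-density route via Corollary~\ref{Cor_SRandCoprvectors} does work and gives a pleasant alternative; but the hypothesis as stated does not grant that.
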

\begin{proof}
$(\Rightarrow)$  Immediate from
  Lemma \ref{Lem_SimplexAnchvsStReg}.

$(\Leftarrow)$
Suppose that $P$ is such that $P=S_1\cup\cdots\cup S_m$ for some anchored sets $S_1,\ldots,S_m\subseteq\R^{n}$

   Let $\Delta$ be a regular triangulation of $P$, and
   $T$  a maximal simplex in $\Delta$, with the intent of proving that
   $T$ is strongly regular.

   Let $v\in{\rm relint}(T)$.
   Since $T\subseteq P \subseteq S_1\cup\cdots\cup S_m$ there exists
$S_i$ such that $v\in S_i$.
Lemma \ref{Lem_MaxSimpTriang}  yields
  $0<\delta_1\in\R$ such that $B(\delta_1,v)\cap P\subseteq T$.
   Since $v\in S_i$, there exists a point
    $w$ in $B(\delta_1,v)\cap{\rm relint}(S_i)$,
whence  $w\in {\rm relint}(T)\cap{\rm relint}(S_i)$.

   Again by Lemma \ref{Lem_MaxSimpTriang},
   there exists $0<\delta_2\in\R$ such that $B(\delta_2,w)\cap P\subseteq T$.
   By definition of the relative interior of $S_i$, there exists
$0<\delta_3\in\R$
   such that $B(\delta_1�,w)\cap {\rm aff}(S_i)\subseteq S_i$.
   Letting $\delta={\rm min}\{\delta_2,\delta_3\}$ we obtain
   \begin{eqnarray}
     \nonumber B(\delta,w)\cap{\rm aff}(S_i)&\subseteq& B(\delta,w)\cap S_i\\
     \nonumber &\subseteq& B(\delta,w)\cap P\\
     \nonumber &\subseteq& T.
   \end{eqnarray}

    Therefore, ${\rm aff}(S_i)\subseteq {\rm aff}(T)$, and  $T$ is anchored.
    Since $T$ is a regular simplex,
    by Lemma \ref{Lem_SimplexAnchvsStReg},
    $T$ is strongly regular, whence so is
  $P$.
\end{proof}

\begin{theorem}\label{Thm_combination}
   Let $\varphi$ be a formula in the language of MV-algebras. Then the
following are equivalent
   \begin{itemize}
    \item[(i)] $\varphi$ is admissibly saturated;
    \item[(ii)] $\varphi$ is exact.
   \end{itemize}
\end{theorem}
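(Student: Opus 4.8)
The plan is to prove the two implications separately, noting that (ii)$\Rightarrow$(i) has already been settled above (every exact formula is admissibly saturated). For the converse (i)$\Rightarrow$(ii) I would first pass to geometry. Using the categorical equivalence $\Gamma$ together with Theorem~\ref{Theo_Baker-Beynon}, I associate to a formula $\varphi$ in the variables $X=\{x_1,\dots,x_n\}$ the rational polyhedron $P_\varphi=\{x\in[0,1]^n\mid \varphi(x)=1\}$, so that $\mathsf{Free}MV_X/\theta([\varphi]_{\cong_{\text{\L}_\infty}},[\top]_{\cong_{\text{\L}_\infty}})\cong\Gamma(\McN(P_\varphi))$. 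Consequently $\varphi$ is exact precisely when $\McN(P_\varphi)$ is an exact unital $\ell$-group, and by Theorem~\ref{Theo_WeakProj} this happens exactly when $P_\varphi$ is connected, meets $\Zed^n$, and is strongly regular. Thus the whole theorem reduces to showing that $\varphi$ is admissibly saturated if and only if $P_\varphi$ satisfies these three polyhedral conditions.

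The integer-point and connectedness conditions I would extract directly from admissible saturation. First, a substitution $\sigma$ unifies $\varphi$ (makes $\sigma(\varphi)$ a theorem) exactly when the corresponding $\Zed$-map carries its domain cube into $P_\varphi$; evaluating such a $\Zed$-map at the origin and invoking Corollary~\ref{Cor_Div_Denominators} produces a point of $P_\varphi\cap\Zed^n$, while conversely any integer point of $P_\varphi$ yields a constant unifier. Hence $\varphi$ is unifiable iff $P_\varphi\cap\Zed^n\neq\emptyset$; and since an admissibly saturated $\varphi$ cannot make the empty-conclusion rule $(\{\varphi\},\emptyset)$ admissible (no $\psi\in\emptyset$ could witness derivability in Definition~\ref{Def:AdmSat}), it must be unifiable, which gives condition~(iii) of Theorem~\ref{Theo_WeakProj}. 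For connectedness I argue by contraposition: if $P_\varphi$ splits into disjoint nonempty rational polyhedra $A$ and $B$, I separate them by a McNaughton function and take formulas $\psi_A,\psi_B$ equal to $1$ on $A$ and on $B$ respectively but not on all of $P_\varphi$. Every unifier of $\varphi$ has connected image, hence lands wholly in $A$ or in $B$ and so unifies $\psi_A$ or $\psi_B$; thus $(\{\varphi\},\{\psi_A,\psi_B\})$ is admissible while neither $(\{\varphi\},\{\psi_A\})$ nor $(\{\varphi\},\{\psi_B\})$ is derivable, so $\varphi$ is not admissibly saturated.

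The remaining and most delicate condition, strong regularity, is where I would lean on the external input. Once the other two conditions are in hand, I invoke Je\v{r}\'abek's geometric characterization of admissibly saturated formulas from \cite{Je2010}, which is phrased precisely in terms of the anchored sets recalled before Lemma~\ref{Lem_SimplexAnchvsStReg}: admissible saturation forces $P_\varphi$ to be a finite union of anchored sets. By Theorem~\ref{Thm_AnchvsStReg} this is equivalent to $P_\varphi$ being strongly regular, with Lemma~\ref{Lem_SimplexAnchvsStReg} supplying the simplex-level bridge between ``anchored'' and ``strongly regular'' that makes the translation exact. Assembling the three conditions and applying Theorem~\ref{Theo_WeakProj} then yields that $\McN(P_\varphi)$ is exact, i.e. that $\varphi$ is exact, completing (i)$\Rightarrow$(ii).

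The main obstacle is exactly this last step. Unifiability and connectedness can be read off from the definition of admissible saturation by elementary separation arguments, but turning admissible saturation into strong regularity genuinely requires Je\v{r}\'abek's analysis of admissible rules in \L$_\infty$. The role of the present paper is to recast that anchored-set condition as the polyhedral notion of strong regularity (Theorem~\ref{Thm_AnchvsStReg}) and, through Theorem~\ref{Theo_WeakProj}, as exactness of the associated unital $\ell$-group, so that the coincidence of the two classes of formulas becomes a straightforward matching of three geometric conditions.
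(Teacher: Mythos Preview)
Your proposal is correct and follows the same route as the paper, which simply records the proof as a combination of Theorems~\ref{Theo_WeakProj} and~\ref{Thm_AnchvsStReg} with \cite[Theorem~3.5]{Je2010} via the categorical equivalence~$\Gamma$. Your direct arguments for the integer-point and connectedness conditions are fine but redundant, since Je\v{r}\'abek's characterization already delivers all three geometric conditions (connectedness, an integer point, and being a finite union of anchored polytopes) in one stroke; the paper therefore omits them and lets the cited result do all the work.
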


\begin{proof}
{}A combination of Theorems \ref{Theo_WeakProj} and
   \ref{Thm_AnchvsStReg} with
  \cite[Theorem 3.5]{Je2010}, again using
  the categorical equivalence  $\Gamma$ of  \cite[Theorem 3.9]{Mu1986}.
\end{proof}

\subsection*{Acknowledgements}

\smallskip
The author would like to thank Professor Daniele Mundici for his
helpful comments and suggestions on previous drafts of this paper.

\end{document}